\def\Xint#1{\mathchoice
  {\XXint\displaystyle\textstyle{#1}}%
  {\XXint\textstyle\scriptstyle{#1}}%
  {\XXint\scriptstyle\scriptscriptstyle{#1}}%
  {\XXint\scriptscriptstyle\scriptscriptstyle{#1}}%
  \!\int}
\def\XXint#1#2#3{{\setbox0=\hbox{$#1{#2#3}{\int}$}
    \vcenter{\hbox{$#2#3$}}\kern-.5\wd0}}
\def\fint{\Xint-}
\def\longrightharpoonup{\DOTSB\relbar\joinrel\rightharpoonup}
\def\NN{\mathbb N}
\def\RR{\mathbb R}
\def\ZZ{\mathbb Z}
\def\div{\operatorname{div}}
\def\diam{\operatorname{diam}}
\begin{document}
\numberwithin{equation}{section}
\newtheorem{theoreme}{Theorem}[section]
\newtheorem{proposition}[theoreme]{Proposition}
\newtheorem{remarque}[theoreme]{Remark}
\newtheorem{lemme}[theoreme]{Lemma}
\newtheorem{corollaire}[theoreme]{Corollary}
\newtheorem{definition}[theoreme]{Definition}
\newtheorem{hypothese}{Assumption}
\renewcommand{\thehypothese}{(A\arabic{hypothese})}

\title{Precised approximations in elliptic homogenization beyond the periodic setting}
\author{X. Blanc$^1$, M. Josien$^2$ \& C. Le Bris$^2$\\
  {\footnotesize $^1$ Université Paris-Diderot, Sorbonne Paris-Cité, Sorbonne Université,}\\
  {\footnotesize CNRS, Laboratoire Jacques-Louis Lions, F-75013 Paris.}\\
 {\footnotesize{\tt blanc@ann.jussieu.fr}}\\
{\footnotesize $^2$ Ecole des Ponts and INRIA,}\\
{\footnotesize 6 \& 8, avenue Blaise Pascal, 77455 Marne-La-Vall\'ee
  Cedex 2, FRANCE}\\
{\footnotesize{\tt claude.le-bris@enpc.fr, marc.josien@enpc.fr}}
}

\maketitle
\begin{abstract}
We consider homogenization problems for linear elliptic equations in divergence form. 
The coefficients are assumed to be a local perturbation of some periodic background. We prove $W^{1,p}$ and
Lipschitz convergence of the two-scale expansion, with explicit rates. For this purpose, we use a corrector adapted
to this particular setting, and defined in \cite{BLLcpde,BLLfutur1}, and apply the same strategy of proof as
Avellaneda and Lin in \cite{AvellanedaLin}. We also
propose an abstract setting generalizing our particular assumptions for which the same estimates hold. 
\end{abstract}
\tableofcontents

\section{Introduction}

The present paper follows up on the articles \cite{BLLMilan,BLLcpde,BLLfutur1,BLLfutur2}. In these works, we
studied homogenization theory for linear elliptic equations, for which the coefficients are assumed to be
periodic and perturbed by local defects, that is, $L^r(\RR^d)$ functions, $r\in ]1,+\infty[$. As expected, the
macroscopic behavior, in the homogenization limit, is defined by the periodic background only. However, if one is
interested in finer convergence properties, possibly with a convergence rate, then the defect may have an
impact. In such a case, a corrector taking the defect into account is necessary. Its existence has been proved in
\cite{BLLMilan} in the case $r=2$, and in \cite{BLLcpde,BLLfutur1} in the general case. Formal arguments in
\cite{BLLMilan,BLLcpde} indicate that this adapted corrector is important for having a good convergence rate
and/or convergence in a finer topology.

The aim of the present paper is to prove that the corrector constructed in
\cite{BLLMilan,BLLcpde,BLLfutur1,BLLfutur2} indeed allows for such convergence results. The  work \cite{AvellanedaLin} and, more recently, \cite{KLSGreenNeumann}, are the two major reference works on these issues. They both address the periodic setting, and we will briefly summarize the important results they established in Section~\ref{sec:periodic-case} below. 

Our proofs, in the setting of a periodic geometry perturbed by a local defect, closely follow the general pattern
of the proofs exposed in \cite{KLSGreenNeumann} and reproduce many key ingredients and details of both
\cite{AvellanedaLin} and \cite{KLSGreenNeumann}. For the sake of clarity and brevity, and also with a specific
pedagogic purpose because the arguments may become very rapidly technical,  we have however decided to present our
proofs in the particular case of \emph{equations}, as opposed to \emph{systems}. Some simplifications of the proofs
of \cite{AvellanedaLin,KLSGreenNeumann}, which all apply to systems as well as to equations, are then possible. The
reader might better, then, appreciate the string of key arguments, in the absence of some unnecessary
technicalities. Similarly, we have also provided some additional internal details of the proofs which can be useful
to non experts for a better comprehension. Our results carry over
to elliptic systems (satisfying the Legendre condition, as is the case for \cite{AvellanedaLin,KLSGreenNeumann}),
provided some of the arguments are adjusted, and then follow those of \cite{AvellanedaLin,KLSGreenNeumann} even
more closely. We did not check all the details in this direction.


One interesting feature we emphasize in the present contribution is that the results of \cite{AvellanedaLin,KLSGreenNeumann} of the periodic setting indeed carry over not only to the \emph{perturbed} periodic setting, but also to a quite general abstract setting, which we make precise in Section~\ref{sec:periodic-case-with} below. The latter observation about the generalization of the results of \cite{AvellanedaLin} and related works to non periodic setting is corroborated by the recent works \cite{bella-giunti-otto,gloria-neukamm-otto-2014}. Some  of the necessary assumptions presented there (in the context of random homogenization) are quite close in spirit to our own formalization.


We consider the following problem:
 \begin{equation}
  \label{eq:equation}
  \left\{
  \begin{aligned}
  -\div\, \left[a\left(\frac x \varepsilon\right)\,\nabla u^\varepsilon\right]=f && \text{in} && \Omega,\\
  u^\varepsilon = 0 &&\text{on} && \partial\Omega.
  \end{aligned}\right.
\end{equation}
Here, $\Omega$ is a domain of $\RR^d$, the regularity of which will be made precise below. The right-hand side $f$
is in $ L^q(\Omega)$ for some $q\in ]1,+\infty[$, and the matrix-valued
coefficient $a$ satisfies the following assumptions:
\begin{equation}
  \label{eq:aper+tildea}
  a=a^{per}+\widetilde a
\end{equation}
where $a^{per}$ denotes a \emph{periodic} unperturbed background, and $\widetilde a$ the perturbation, with 
\begin{equation}
\label{eq:hyp1}
\left\{
\begin{array}{l}
a^{per}(x)+\widetilde a(x) \quad \hbox{\rm and}\quad a^{per}(x) \quad \hbox{\rm are both uniformly elliptic, in}\,x\in\RR^d, 
\\
a^{per}\in \left(L^\infty(\RR^d)\right)^{d\times d},
\\
  \widetilde a \in \left(L^\infty(\RR^d)\cap L^r(\RR^d)\right)^{d\times d}, \quad \hbox{\rm for some }\quad 1\leq r<+\infty
\\
   a^{per},\, \widetilde a\in \left(C^{0,\alpha}_{\rm unif}\left(\RR^d\right)\right)^{d\times d} \quad\hbox{\rm for some}\quad \alpha>0,
\end{array}
\right.
\end{equation}
where $C^{0,\alpha}_{\rm unif}(\RR^d)$ denotes the space functions that are, uniformly on $\RR^d$, H\"older
continuous with coefficient $\alpha$.

From now on, we will not make the distinction between the spaces $L^q(\Omega),$ $\left(L^q(\Omega)\right)^d$ and
$\left(L^q(\Omega)\right)^{d\times d}$, denoting $\|z\|_{L^q(\Omega)}$ the norm of $z$ even if $z$ is a
vector-valued or a matrix-valued function. The same convention is adopted for Hölder spaces $C^{0,\alpha}$.

We also note that we assume $d\geq 3$. All our proofs and results can be adapted to the dimension $d=2$. Of
course, dimension $1$ is specific and can be addressed by (mostly explicit) analytic arguments that we
omit here.

\medskip

All the results we present here have been announced in \cite{crasBJL}, and are part of the PhD thesis
\cite{theseMJ}.

\subsection{The periodic case}\label{sec:periodic-case}

In the periodic case, that is, $\widetilde a \equiv 0$, it is well-known (see for instance \cite{BLP}) that problem \eqref{eq:equation} converges to the
following homogenized problem
 \begin{equation}
  \label{eq:homog_periodique}
  \left\{
  \begin{aligned}
  -\div\, \left[a^*\,\nabla u^*\right]=f && \text{in} && \Omega,\\
  u^* = 0 &&\text{on} && \partial\Omega,
  \end{aligned}\right.
\end{equation}
where $a^*$ is a constant matrix. It is classical that $u^\varepsilon\longrightarrow u^*$ in $L^2(\Omega)$, and that
$\nabla u^\varepsilon \longrightharpoonup \nabla u^*$ in $L^2(\Omega)$. In order to have strong convergence of the
gradient, correctors need to be introduced, that is, the solutions to the following problem
\begin{equation}
\label{eq:correcteur-per}
  -\div\left(a^{per}(x)\,(p+\nabla w_{p,per}(x))\right)=0,\quad w_{p,per}\text{ is periodic,}
\end{equation}
posed for each fixed vector $p\in \RR^d$. It is well-known (see, here again, \cite{BLP}), that problem
\eqref{eq:correcteur-per} has a unique solution (up to the addition of a constant), for any $p\in\RR^d$.
Given (\ref{eq:hyp1}), elliptic regularity implies that $\nabla w_{p,per}\in C^{0,\alpha}_{\rm unif}(\RR^d)$. Introducing the remainder
\begin{equation}
  \label{eq:reste-per}
  R^\varepsilon_{per}(x):= u^\varepsilon(x) - u^*(x) -\varepsilon\sum_{j=1}^d w_{per,e_j}\left(\frac
    x\varepsilon\right) \partial_j u^*(x),
\end{equation}
the results of \cite{BLP} imply that $\nabla R^\varepsilon_{per}\longrightarrow 0$ in $L^2(\Omega)$, with the following
convergence rate:
\begin{equation}
  \label{eq:cv_BLP}
  \left\|\nabla R^\varepsilon_{per}\right\|_{L^2(\Omega)}\leq C\|f\|_{L^2(\Omega)} \, \sqrt\varepsilon,
\end{equation}
for some constant independent of $f$. The convergence rate $O(\sqrt\varepsilon)$ is mainly due to the existence of
a boundary layer,
and an $O(\varepsilon)$ convergence can actually be proved for interior domains. 

In \cite{griso-2002,griso-2004}, the generalization of the above results (both \eqref{eq:cv_BLP} and interior convergence of
order $\varepsilon$) are proved under more general assumptions ($\Omega$ of class $C^{1,1}$,
$a\in L^\infty$ and
the corrector is not assumed to have its gradient in $L^\infty$). Also in \cite{griso-2004}, in the case of Lipschitz domains, a
convergence up to the boundary of order $\varepsilon^{\gamma}$, for some $0<\gamma\leq 1/3$, is established. 

In order to have a $O(\varepsilon)$ convergence rate up to the boundary, an
adaptation of the corrector is needed. This question was studied in
\cite{onofrei-vernescu} in the case of non-H\"older coefficients. For the case of systems (as opposed to equations)
it was studied in \cite{KLS2012} (actually also with non-homogeneous Dirichlet conditions).

Issues regarding the convergence of the remainder are also addressed in \cite{AvellanedaLin}, where Avellaneda and Lin proved uniform (with respect to $\varepsilon$) continuity for the
operator $L_\varepsilon$ which, to the couple $(f,g)$, associates the solution $u^\varepsilon$ of
\eqref{eq:equation} with Dirichlet condition $u^\varepsilon = g$. This continuity holds from $L^q(\Omega)\times
C^{0,\gamma}(\partial\Omega)$ to $C^{0,\mu}(\Omega)$ if $q>d$, with $\mu = \min(\gamma,d/q)$. If $q\leq d$, with
homogeneous Dirichlet conditions, the continuity holds from $L^q(\Omega)$ to $W^{1,r}(\Omega),$ with $1/r+1/q =
1/d$. These results also hold for systems, and actually improve an earlier and more restricted work \cite{AL-1987}. In \cite{AL-1989-2}, the same kind of
results were extended to equations in non-divergence form. In \cite{KLSGreenNeumann}, estimates were proved
for the convergence of the Green functions associated to \eqref{eq:equation}, both for Dirichlet and Neumann
conditions. These estimates allow to prove the convergence rate of $R^\varepsilon$ in $W^{1,p}$. 

All the above results are valid only for periodic coefficients. 
In the preprint \cite{gloria-neukamm-otto-2014}, some important results of \cite{AvellanedaLin}
were extended to the stochastic case, using the idea that, in \cite{AvellanedaLin}, periodicity was only used to
ensure some uniform $H$-convergence. This is also a key idea of the present work. 

\subsection{The periodic case with a local defect}\label{sec:periodic-case-with}

In order to develop the approximation estimates for (\ref{eq:equation})-(\ref{eq:aper+tildea})-(\ref{eq:hyp1})
for $\widetilde a \not\equiv 0$, we define the corrector problem 
\begin{equation}
  \label{eq:correcteur}
\left\{
  \begin{aligned}
      &-\,\div\left(a\,(p+\nabla w_{p})\right)=0 \quad  \text{in } \RR^d, \\
      &\frac{|w_p(x)|}{1+|x|} \mathop{\longrightarrow}_{|x|\to+\infty} 0.
  \end{aligned}
\right.
\end{equation}
In the special case $\widetilde a \equiv 0$, a Liouville-type theorem was proven in \cite{AL-1989-CRAS}, showing that \eqref{eq:correcteur} reduces to
\eqref{eq:correcteur-per}: up to the addition of a constant, the only solution that is strictly sublinear at infinity is the periodic solution. In
the case $\widetilde a\not\equiv 0$, it has been proven in \cite{BLLMilan,BLLcpde,BLLfutur1} (see also the recent
work \cite{JiangLin}, that brings a different perspective) that Problem~\eqref{eq:correcteur} has a
solution, that reads as
\begin{equation}
  \label{eq:correcteur-decomposition}
  w_p = w_{p,per} + \widetilde w_p, 
\end{equation}
where $w_{p,per}$ is the solution to \eqref{eq:correcteur-per}, $\widetilde w_p$ is the solution to 
\medskip
\begin{equation}
  \label{eq:correcteur-tilde}
  -\,\div\left(a\,\nabla \widetilde w_{p}\right)= \,\hbox{\rm div}\left(\widetilde a\,(p+\nabla w_{p,per})\right)\quad \text{in } \RR^d,
\end{equation}
and, if $\widetilde a \in L^r(\RR^d)$, $\nabla \widetilde w_p\in
L^r(\RR^d)$, for any $r\in ]1,+\infty[$. Even if $\widetilde a\not\equiv 0$, the proofs of \cite{BLP} still imply in
this case that
$u^\varepsilon\longrightarrow u^*$ in $L^2(\Omega)$ and $\nabla u_\varepsilon \longrightharpoonup \nabla u^*$ in $L^2(\Omega)$,
as $\varepsilon\to 0$, where $u^*$ solves \eqref{eq:homog_periodique}, and the matrix $a^*$ is equal to the
periodic homogenized matrix. However, in order to improve and quantify this convergence, \cite{BLLMilan,BLLcpde,BLLfutur1} show
that we need to replace the periodic corrector (\ref{eq:correcteur-per}) by the solution to
\eqref{eq:correcteur}, and define
\begin{equation}
  \label{eq:reste}
  R^\varepsilon(x):= u^\varepsilon(x) - u^*(x) -\varepsilon\sum_{j=1}^d w_{e_j}\left(\frac x\varepsilon\right) \partial_j u^*(x),
\end{equation}
instead of (\ref{eq:reste-per}). Then we have:
\begin{theoreme}[\textbf{Local defects in periodic backgrounds}]\label{th:defaut} Assume $d\geq 3$.
  Consider \eqref{eq:equation}, where the matrix-valued coefficient $a$ satisfies \eqref{eq:aper+tildea}, and $a^{per}$
  and $\widetilde a$ satisfy \eqref{eq:hyp1}. Assume that $\Omega$ is a $C^{2,1}$ domain, that $\Omega_1
  \subset\subset \Omega$, that $r\neq d$ and define
  \begin{equation}
    \label{eq:nu_r}
    \nu_r = \min \left(1,\frac d r \right) \in ]0,1].
  \end{equation}
Let $f\in L^2(\Omega)$, and let $u^\varepsilon$, $u^*$ be the solutions to \eqref{eq:equation} and
\eqref{eq:homog_periodique}, respectively. Define $R^\varepsilon$ by \eqref{eq:reste}, where the corrector $w_p$
with $p=e_j$, $1\leq j\leq d$, is
defined by \eqref{eq:correcteur-decomposition}-\eqref{eq:correcteur-tilde}-\eqref{eq:correcteur-per} (thus in
particular solves \eqref{eq:correcteur}). Then $R^\varepsilon$
satisfies the following:
\begin{enumerate}
\item $R^\varepsilon\in H^1(\Omega)$, and
  \begin{equation}
    \label{eq:cv_L2}
    \left\|R^\varepsilon\right\|_{L^2(\Omega)} \leq C \varepsilon^{\nu_r} \|f\|_{L^2(\Omega)}, 
  \end{equation}
\begin{equation}
  \label{eq:cv_H1}
    \left\|\nabla R^\varepsilon\right\|_{L^2(\Omega_1)} \leq C \varepsilon^{\nu_r} \|f\|_{L^2(\Omega)}.
\end{equation}
\item If $f\in L^q(\Omega)$ for some $q\geq 2$, then $R^\varepsilon\in W^{1,q}(\Omega)$ and
\begin{equation}
  \label{eq:cv_W1p}
    \left\|\nabla R^\varepsilon\right\|_{L^q(\Omega_1)} \leq C \varepsilon^{\nu_r} \|f\|_{L^q(\Omega)}.
\end{equation}
\item If $f\in C^{0,\beta}(\overline \Omega)$ for some $\beta\in ]0,1[$, then $R^\varepsilon\in
  W^{1,\infty}(\Omega)$ and
\begin{equation}
  \label{eq:cv_lipschitz}
    \left\|\nabla R^\varepsilon\right\|_{L^\infty(\Omega_1)} \leq C \varepsilon^{\nu_r}\ln\left(2+\varepsilon^{-1}\right) \|f\|_{C^{0,\beta}(\Omega)},
\end{equation}
\end{enumerate}
where, in (\ref{eq:cv_L2}) through (\ref{eq:cv_lipschitz}), the various constants $C>0$ do not depend on $f$ nor on
$\varepsilon$. 
\end{theoreme}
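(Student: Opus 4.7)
The strategy is to follow the Avellaneda--Lin program as revisited in \cite{KLSGreenNeumann}, substituting the perturbed corrector $w_p = w_{p,per} + \widetilde w_p$ of (\ref{eq:correcteur-decomposition}) for the periodic corrector in the two-scale expansion, and quantifying how the extra term $\widetilde w_p$ degrades the convergence rates. The starting point is to derive the equation satisfied by $R^\varepsilon$: a direct manipulation based on (\ref{eq:correcteur}) gives
\[
 -\div\bigl(a(\cdot/\varepsilon)\,\nabla R^\varepsilon\bigr) \;=\; \div(F^\varepsilon),
\]
where, after introducing a flux corrector $B_{kij}$, antisymmetric in $(k,i)$ and satisfying $\partial_k B_{kij} = \bigl(a(I+\nabla w)\bigr)_{ij} - a^*_{ij}$, the field $F^\varepsilon$ reads schematically $\varepsilon\,(B,w)(\cdot/\varepsilon)\,\nabla^2 u^*$. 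The flux corrector splits, as in (\ref{eq:correcteur-decomposition}), into a periodic part and a defect part whose gradient lies in $L^r$. Note also that the trace of $R^\varepsilon$ on $\partial\Omega$ is not zero but equals $-\varepsilon\sum_j w_{e_j}(\cdot/\varepsilon)\,\partial_j u^*$.

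For item~1, I would write $R^\varepsilon = \widetilde R^\varepsilon + \rho^\varepsilon$ with $\rho^\varepsilon$ a boundary-layer corrector matching that trace, so that $\widetilde R^\varepsilon \in H_0^1(\Omega)$. Testing the equation against $\widetilde R^\varepsilon$ reduces (\ref{eq:cv_L2})--(\ref{eq:cv_H1}) to estimates for $\|\varepsilon w_{e_j}(\cdot/\varepsilon)\|_{L^q}$ and $\|\varepsilon B_{kij}(\cdot/\varepsilon)\|_{L^q}$ on $\Omega$ and on a thin strip near $\partial\Omega$. The periodic parts give the classical rate. The defect parts are controlled using $\nabla \widetilde w_p \in L^r \cap L^\infty$, a Sobolev--Morrey embedding for $\widetilde w_p$, and the rescaling identity
\[
 \bigl\|\varepsilon\,\widetilde w_p(\cdot/\varepsilon)\bigr\|_{L^q(\Omega)} \;=\; \varepsilon^{\,1+d/q}\,\bigl\|\widetilde w_p\bigr\|_{L^q(\varepsilon^{-1}\Omega)},
\]
which yields the exponent $\nu_r = \min(1,d/r)$ according to whether $\widetilde w_p$ is bounded (case $r>d$) or merely sublinear at infinity (case $r<d$). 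The $L^2$ bound on $R^\varepsilon$ itself then follows from the $H^1$ control of $\widetilde R^\varepsilon$ combined with a direct $L^2$ estimate on $\rho^\varepsilon$.

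Items~2 and~3 rely on uniform-in-$\varepsilon$ $W^{1,q}$ and Lipschitz estimates for $-\div(a(\cdot/\varepsilon)\nabla\,\cdot\,)$. These I would obtain via the Avellaneda--Lin compactness-and-iteration method: at every scale $\rho \gtrsim \varepsilon$, approximate $u^\varepsilon$ on $B_\rho$ by a solution of the homogenized equation and iterate a Campanato-type excess decay, yielding $C^{1,\mu}$ bounds independent of $\varepsilon$. Combined with the Green function estimates of \cite{KLSGreenNeumann} and the $L^q$ rates from item~1, this delivers (\ref{eq:cv_W1p}) and, by a standard endpoint argument for H\"older data, (\ref{eq:cv_lipschitz}) with its logarithmic factor.

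The main obstacle lies in the compactness step of the iteration. In the purely periodic case, uniform $H$-convergence of $a(\cdot/\varepsilon + y)$ as $\varepsilon \to 0$ is automatic in $y$. Here the translated coefficients are no longer periodic, but the decay $\widetilde a \in L^r$ quantifies how close they are, in the $H$-topology, to the periodic limit; this quantification is what ultimately produces the $\varepsilon^{\nu_r}$ loss and motivates the abstract framework of Section~\ref{sec:periodic-case-with}. Turning this quantified $H$-convergence into a workable one-step improvement lemma, and combining it with the defect-aware flux corrector manipulations, is the core technical task.
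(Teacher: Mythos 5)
Your broad plan is in the right spirit (two-scale expansion with the adapted corrector, flux corrector, Avellaneda--Lin compactness, Green function estimates), but there are two substantive errors of understanding, plus one gap.

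First, you reverse the roles of $r<d$ and $r>d$. By \eqref{eq:estimation_w_tilde_3}, it is when $r<d$ that $\widetilde w_p$ is \emph{bounded} (hence $\nu_r=1$, no degradation), and it is when $r>d$ that one only has a H\"older/Morrey estimate $|\widetilde w_p(x)-\widetilde w_p(y)|\lesssim|x-y|^{1-d/r}$, giving $\nu_r=d/r<1$. The same dichotomy governs the potential $\widetilde B$ via Lemma~\ref{lm:existence_B} and Proposition~\ref{pr:defaut2}.

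Second, and more importantly, you misattribute the source of the rate $\varepsilon^{\nu_r}$. You claim that the obstacle is to \emph{quantify} the $H$-convergence of $a(\cdot/\varepsilon+y)$ and that this quantification produces the $\varepsilon^{\nu_r}$ loss. That is not how the argument runs. The Avellaneda--Lin compactness iteration (Lemmas~\ref{lm7}, \ref{lm22}, \ref{lm14}) needs only \emph{qualitative} uniform $H$-convergence (Definition~\ref{def:uniform-H-cv}, established in Proposition~\ref{prop:uniform-H-cv} from \ref{H1}--\ref{H6}); it runs by contradiction, yields $\varepsilon$-uniform interior and boundary regularity for the homogeneous operator, and no rate appears at this stage. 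The rate $\nu_r$ enters entirely through the \emph{quantified sublinearity} of the corrector and the flux potential (Assumptions \ref{H7}--\ref{H8}, verified with $\nu=\nu_r$ in Propositions~\ref{pr:defaut1} and \ref{pr:defaut2}), which feeds into the right-hand side bounds on $H^\varepsilon$ in Lemma~\ref{lm2.7} (factors $\varepsilon^\nu R^{1-\nu}$) and into the Green function convergence rate of Theorem~\ref{th:cv_Green}. The "quantified $H$-convergence one-step improvement lemma" you anticipate is not the mechanism used.

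Third, your route to the $L^2$ bound \eqref{eq:cv_L2} (boundary-layer splitting $R^\varepsilon=\widetilde R^\varepsilon+\rho^\varepsilon$ plus energy estimate and a direct $L^2$ estimate on $\rho^\varepsilon$) does not close for $f\in L^2(\Omega)$ on the full domain $\Omega$: the trace of $\rho^\varepsilon$ involves $\nabla u^*$ on $\partial\Omega$, which is not controlled by $\|f\|_{L^2}$ alone, and a naive energy estimate gives the classical boundary-layer loss $\sqrt\varepsilon$, not $\varepsilon^{\nu_r}$. The paper instead obtains \eqref{eq:cv_L2} (more precisely \eqref{eq:cv_L2_general}) by first proving the pointwise Green function convergence estimate $|G^\varepsilon(x,y)-G^*(x,y)|\lesssim\varepsilon^\nu|x-y|^{2-d-\nu}$ (Theorem~\ref{th:cv_Green}, built on Lemmas~\ref{KLSlem3.2}--\ref{KLSth3.4}) and then a Young--O'Neil convolution argument (Corollary~\ref{cor:1.14}); the interior gradient bound \eqref{eq:cv_H1} is then obtained via the split $R^\varepsilon=R_1^\varepsilon+R_2^\varepsilon$, an energy estimate on $R_1^\varepsilon$ using the $H^\varepsilon$-bound of Lemma~\ref{lm2.7}, and Caccioppoli on $R_2^\varepsilon$. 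You would need to supply the Green-function convergence step; your sketch as written does not.
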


Given (\ref{eq:nu_r}), this result gives two different behaviors of the remainder $R^\varepsilon$ according to $r<d$ or $r>d$. In the first
case, the defect is so localized that the estimates are exactly those of the periodic case \cite{KLSGreenNeumann}. On the contrary, if
$r>d$, the defect is spread out, and the quality of approximation deteriorates as $r$ grows. In the
critical case $r=d$, we can apply the results of the case $r>d$ in order to have the above
estimates, in which $\varepsilon^{\nu_r}$ is replaced by $\varepsilon^\nu$, for any $\nu<\nu_r=1$.

As already pointed out in \cite{BLLcpde}, the case $r=d$ is a critical case for the existence of a
corrector. Indeed, even if $a^{per}\equiv 1$, hence $\nabla w^{per} = 0$, the corrector equation reads as
\begin{displaymath}
  -\Delta \widetilde w_p = \div\left(\widetilde a p\right).
\end{displaymath}
Hence, as $|x|\to +\infty$, $\displaystyle\widetilde w_{p}(x)\approx C\int{{(x-y)\cdot \left(\widetilde
      a(y)\,p\right)}\over{|x-y|^{d}}}dy,$ for some constant $C\neq 0$. This
makes clear the fact that $\widetilde a(y) \sim |y|^{-1}$ is reminiscent of the criticality of the space $L^d(\RR^d)$.

\begin{remarque}
  \label{rq:bord}
In Theorem~\ref{th:defaut}, the domain $\Omega$ is assumed to be $C^{2,1}$. However, as
far as estimates \eqref{eq:cv_L2}-\eqref{eq:cv_H1}-\eqref{eq:cv_W1p} are concerned, a $C^{1,1}$ regularity is
sufficient. The regularity $C^{2,1}$ is only necessary to prove that $R^\varepsilon\in
W^{1,\infty}(\Omega)$. 
\end{remarque}

\subsection{Abstract general assumptions}
\label{sec:generalisation}

As we shall see below, Theorem~\ref{th:defaut} is a consequence of a more general, abstract, result that we
state in the present subsection. The point is that, in the theory of \cite{AvellanedaLin}, the periodicity of
the matrix-valued coefficient $a$ is essentially useful in order to have a bounded corrector. This assumption may be replaced by
uniform $H$-convergence (a notion which is made precise below in Definition~\ref{def:uniform-H-cv}). 

Let us now emphasize that (\ref{eq:equation}) considers a \emph{rescaled} coefficient $a\left(\frac x
  \varepsilon\right)$, which is a strong assumption of our setting. This implies, since $a^*$ is defined as some
weak limit of functions of $x/\varepsilon$, that $a^*$ is homogeneous of degree $0$. Hence, if it is continuous, it
must be a constant. This is why we hereafter assume that
\begin{equation}
  \label{eq:a_constante}
  a^* \quad\text{is a constant matrix.}
\end{equation}

We now introduce a set of assumptions that
formalize our mathematical setting. We consider a matrix-valued coefficient $a$ that satisfies the following conditions
\begin{hypothese}
  \label{H1}
  There exists $\mu>0$ such that
  \begin{displaymath}
    \forall x\in\RR^d, \ \forall \xi\in\RR^d, \quad \mu |\xi|^2 \leq
  \left(a(x)\xi\right)\cdot\xi \leq \frac 1 \mu |\xi|^2.
  \end{displaymath}
\end{hypothese}
\begin{hypothese}
  \label{H2}
There exists $\alpha\in ]0,1[$ such that $a\in C^{0,\alpha}_{\rm unif}(\RR^d).$
\end{hypothese}
Assumptions~\ref{H1} and \ref{H2} are standard, and were made already in \cite{AvellanedaLin}. We now give
more specific assumptions that aim at
generalizing periodicity. The first one is the existence of a corrector:
\begin{hypothese}
  \label{H3}
For any $p\in\RR^d$, there exists $w_p\in H^1_{\rm loc}(\RR^d)$ solution to the corrector equation \eqref{eq:correcteur}.
\end{hypothese}
As in the periodic case, we assume that the gradient of the corrector is bounded uniformly:
\begin{hypothese}
  \label{H4}
For any $p\in\RR^d$, the gradient of $w_p$ is in $L^2_{\rm unif}(\RR^d)$, that is:
\begin{displaymath}
  \|\nabla w_p\|_{L^2_{\rm unif}(\RR^d)} := \sup_{x\in\RR^d} \|\nabla w_p\|_{L^2(B(x,1))} <+\infty,
\end{displaymath}
where $B(x,1)$ denotes the unit ball of center $x$. 
\end{hypothese}
In the periodic case, we have $\nabla w_p\left(\frac \cdot \varepsilon\right) \longrightharpoonup 0$ as
$\varepsilon\to 0$. Moreover, this property is uniform with respect to translation. This is a property we will
\emph{impose} here:
\begin{hypothese}
  \label{H5}
For any sequence $(y_n)_{n\in\NN}$ of vectors in $\RR^d$ and any sequence $\varepsilon_n \to 0$, and for any $p\in
\RR^d$, 
\begin{displaymath}
  \int_Q \nabla w_p\left(\frac x {\varepsilon_n} +  y_n\right)dx \mathop{\longrightarrow}_{n\to+\infty} 0,
\end{displaymath}
where $Q$ is the unit cube of $\RR^d$. 
\end{hypothese}
With a view to addressing non-symmetric matrix-valued coefficients, note that, in contrast to \eqref{eq:aper+tildea}, the fact that $a$ satisfies Assumption~\ref{H3}-\ref{H4}-\ref{H5} does not imply that $a^T$ does. We will in some
situations need to assume that $a^T$ \emph{also} satisfies Assumption~\ref{H3}-\ref{H4}-\ref{H5}, and likewise other assumptions that
follow below. In such a case, we denote by $w^T_p$ the
corrector associated to the coefficient $a^T$. 

\medskip

We will assume that the convergence to the homogenized matrix $a^*$ is uniform in the following sense:
\begin{hypothese}
  \label{H6}
There exists a \emph{constant} matrix $a^*$ such that, for any sequence $(y_n)_{n\in\NN}$ of vectors in $\RR^d$, any
sequence $\varepsilon_n \to 0$ and for any $p\in \RR^d$,
\begin{displaymath}
  \int_Q a\left(\frac x {\varepsilon_n} +  y_n\right)\left(p+ \nabla w_p\left(\frac x {\varepsilon_n} +
      y_n\right)\right) dx \mathop{\longrightarrow}_{n\to+\infty} a^* p,
\end{displaymath}
where the matrix $a^*$ is the homogenized matrix in \eqref{eq:homog_periodique}.
\end{hypothese}
It is stated in Proposition~\ref{prop:uniform-H-cv} below that this
implies \emph{uniform H-convergence,} in the sense of the following definition:
\begin{definition}\label{def:uniform-H-cv}
  We say that the matrix-valued coefficient $a\left(\frac x \varepsilon\right)$ uniformly
  H-converges to $a^*$ if for any sequence $\varepsilon_n\to 0$ and any
  sequence $\left(y_n\right)_{n\in\NN},$
\begin{displaymath}
  a\left(\frac x {\varepsilon_n}+y_n\right)\quad\text{H-converges to}
  \quad a^*.
\end{displaymath}
\end{definition}
For the definition of H-convergence itself, we refer to \cite[Definition
  1]{Murat1997} or \cite[Definition 6.4]{Tartar}.

\medskip

As we will see below, an important quantity in order to analyze the behaviour of the remainder $R^\varepsilon$
defined by \eqref{eq:reste} is the potential associated with $a$. In order to define it, we first introduce the
vector field $M_k$ defined by 
\begin{equation}
  \label{eq:Mik}
  M_k^i(x) = a_{ik}^* - \sum_{j=1}^d a_{ij}(x) \left(\delta_{jk} + \partial_j w_{e_k}(x)\right), \quad 1\leq i \leq d,
\end{equation}
which is divergence-free, according to \eqref{eq:correcteur}. Hence, \emph{formally}, there exists $B_k^{ij}(x)$, which is
skew-symmetric with respect to the indices $i,j$, and is solution to $\div(B_k) = M_k$, that is,
\begin{equation}
  \label{eq:B-antisymetrique}
  \forall i,j,k\in \{1,\dots,d\}, \quad  B_k^{ij} = - B_k^{ji}.
\end{equation}
\begin{equation}
  \label{eq:B-equation}
  \forall j,k\in\left\{1, \dots , d\right\}, \quad \sum_{i=1}^d \partial_i B_k^{ij} = M_k^j.
\end{equation}
A simple way to build this potential $B$ is to solve the following equation
\begin{equation}
  \label{eq:B-equation-2}
  \forall \, i,j,k\in\left\{1, \dots , d\right\}, \quad-\Delta B^{ij}_k = \partial_j M_k^i - \partial_i M_k^j.
\end{equation}
It is clear that if $B$ solves (\ref{eq:B-equation-2}), then it satisfies (\ref{eq:B-antisymetrique}). Moreover,
taking the divergence of (\ref{eq:B-equation-2}), we get $-\Delta (\div(B)) = -\Delta M$, that is,
\begin{displaymath}
  \forall j,k\in\left\{1, \dots , d\right\}, \quad-\Delta \left(\sum_{i=1}^d\partial_i B^{ij}_k\right) = -\Delta M^j_k.
\end{displaymath}
Hence, up to the addition of a harmonic function, we find (\ref{eq:B-equation}). In most cases, this harmonic
function is necessarily a constant (think for instance of the periodic case). 

The above construction can be made precise in the periodic case  (see \cite{JKO}, pp 26-27). We will see below how
and why the construction also makes sense in our setting (\ref{eq:aper+tildea})-(\ref{eq:hyp1}).

The link between $B$ and $R^\varepsilon$ will be clear below when we write the equation satisfied by
$R^\varepsilon$ (see \eqref{eq:equation-reste}-\eqref{eq:H-epsilon}). In order to apply a method close to that of
\cite{AvellanedaLin}, we are going to assume that, in some sense, $\varepsilon w_p(x/\varepsilon)$ and $\varepsilon
B(x/\varepsilon)$ vanish as $\varepsilon\to 0$. This is the meaning of the following two assumptions
\begin{hypothese}
  \label{H7}
There exists $C>0$ and $\nu\in [0,1[$ such that, for any $x\in \RR^d$, any $y\in\RR^d$, and any $k\in \{1,\dots, d\}$,
\begin{displaymath}
  |x-y|\geq 1 \ \Rightarrow \ \left|w_{e_k}(x)-w_{e_k}(y)\right|\leq C|x-y|^{1-\nu}.
\end{displaymath}
\end{hypothese}
\begin{hypothese}
  \label{H8}
There exists a potential $B\in H^1_{\rm loc}(\RR^d)$ defined by \eqref{eq:B-equation-2}, and there exists $C>0$
such that, for any $x\in\RR^d$ and any $y\in\RR^d$, 
\begin{displaymath}
  |x-y|\geq 1 \ \Rightarrow \ \left|B(x)-B(y)\right|\leq C|x-y|^{1-\nu}.
\end{displaymath}
Here, the constant $\nu\in [0,1[$ is assumed to be the same as in Assumption~\ref{H7}.
\end{hypothese}

Proposition~\ref{pr:3} below will establish that, in the case of a coefficient $a$ satisfying \eqref{eq:aper+tildea}
and \eqref{eq:hyp1}, the above assumptions are satisfied with $\nu = \nu_r$ defined by \eqref{eq:nu_r}.

\medskip

Our main result in this general abstract setting is
\begin{theoreme}[\textbf{Abstract general setting}]\label{th:general}
  Assume $d\geq 3$ and that the coefficients $a$ and $a^T$ (and their respective correctors $w_p$ and $w_p^T$)
  satisfy Assumptions \ref{H1} through \ref{H6}, and \ref{H7}-\ref{H8} for some $\nu>0$. Assume that $\Omega$ is a $C^{2,1}$ domain,
  and that $\Omega_1\subset\subset \Omega$. Let $\varepsilon\in ]0,1[$ and let $u^\varepsilon$, $u^*$,
  $R^\varepsilon$ be defined by \eqref{eq:equation}, \eqref{eq:homog_periodique}, \eqref{eq:reste}, respectively,
  where $f\in L^2(\Omega)$. Then we have 
\begin{enumerate}
\item $R^\varepsilon\in H^1(\Omega)$, and
  \begin{equation}
    \label{eq:cv_L2_general}
    \left\|R^\varepsilon\right\|_{L^2(\Omega)} \leq C \varepsilon^{\nu} \|f\|_{L^2(\Omega)},
  \end{equation}
\begin{equation}
  \label{eq:cv_H1_general}
    \left\|\nabla R^\varepsilon\right\|_{L^2(\Omega_1)} \leq C \varepsilon^{\nu} \|f\|_{L^2(\Omega)}.
\end{equation}
\item If $f\in L^p(\Omega)$ for some $p\geq 2$, then $R^\varepsilon\in W^{1,p}(\Omega)$ and
\begin{equation}
  \label{eq:cv_W1p_general}
    \left\|\nabla R^\varepsilon\right\|_{L^p(\Omega_1)} \leq C \varepsilon^{\nu} \|f\|_{L^p(\Omega)}.  
\end{equation}
\item If $f\in C^{0,\beta}(\overline \Omega)$ for some $\beta\in ]0,1[$, then $R^\varepsilon\in
  W^{1,\infty}(\Omega)$ and
\begin{equation}
  \label{eq:cv_lipschitz_general}
    \left\|\nabla R^\varepsilon\right\|_{L^\infty(\Omega_1)} \leq C \varepsilon^{\nu}\ln\left(2+\varepsilon^{-1}\right) \|f\|_{C^{0,\beta}(\Omega)},
\end{equation}
\end{enumerate}
where in (\ref{eq:cv_L2_general}) through (\ref{eq:cv_lipschitz_general}), the various constants $C>0$ do not depend on
$f$ nor on $\varepsilon$.
\end{theoreme}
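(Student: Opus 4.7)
The strategy closely follows the schemes of Avellaneda--Lin \cite{AvellanedaLin} and Kenig--Lin--Shen \cite{KLSGreenNeumann}, but now in the abstract framework of Assumptions~\ref{H1}--\ref{H8}. The first step is to derive the PDE satisfied by $R^\varepsilon$. Setting $v^\varepsilon(x) = u^*(x) + \varepsilon\sum_{j} w_{e_j}(x/\varepsilon)\,\partial_j u^*(x)$, so that $R^\varepsilon = u^\varepsilon - v^\varepsilon$, a direct computation using the corrector equation \eqref{eq:correcteur} and the flux representation $M_k^i = \sum_l \partial_l B_k^{li}$ from \eqref{eq:B-equation} yields
\begin{equation*}
-\div\bigl(a(\cdot/\varepsilon)\,\nabla R^\varepsilon\bigr) \;=\; \varepsilon\,\div\bigl(H^\varepsilon\bigr),
\end{equation*}
where $H^\varepsilon$ is a linear combination of $B(x/\varepsilon)\,\nabla^2 u^*$ and $a(x/\varepsilon)\,w(x/\varepsilon)\,\nabla^2 u^*$. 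The skew-symmetry \eqref{eq:B-antisymetrique} is essential here to cancel the otherwise leading $O(\varepsilon^{-1})$ contribution from $M$. Normalizing $w_p$ and $B$ to vanish at the origin, Assumptions~\ref{H7}--\ref{H8} give the pointwise bound $|\varepsilon w_p(x/\varepsilon)|,\ |\varepsilon B(x/\varepsilon)| \leq C\varepsilon^{\nu}(\varepsilon+|x|)^{1-\nu}$, which is uniformly $O(\varepsilon^\nu)$ on the bounded domain $\Omega$. Consequently $\|H^\varepsilon\|_{L^q(\Omega)} \leq C\varepsilon^\nu \|\nabla^2 u^*\|_{L^q(\Omega)}$ for every $q\in[2,+\infty[$, and the trace $R^\varepsilon\big|_{\partial\Omega} = -\varepsilon \sum_j w_{e_j}(x/\varepsilon)\,\partial_j u^*$ is pointwise of size $\varepsilon^\nu$.

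For point (i), the interior bound \eqref{eq:cv_H1_general} follows by testing the equation against $\eta^2 R^\varepsilon$ for a smooth cut-off $\eta$ equal to $1$ on $\Omega_1$ and vanishing near $\partial\Omega$, absorbing $|\nabla R^\varepsilon|^2$ via the ellipticity in Assumption~\ref{H1}, and using the Calderón--Zygmund estimate $\|\nabla^2 u^*\|_{L^2(\Omega)}\leq C\|f\|_{L^2(\Omega)}$ (for which the $C^{2,1}$ regularity of $\Omega$ is enough). The global $L^2$ bound \eqref{eq:cv_L2_general} is obtained by duality: for $g\in L^2(\Omega)$, solve the adjoint problem $-\div(a^{*T}\nabla \psi) = g$, $\psi=0$ on $\partial\Omega$, expand $\psi$ two-scale using the corrector $w_p^T$ and the flux corrector $B^T$ associated to $a^T$ (available since, by hypothesis, all assumptions are also satisfied by $a^T$), and integrate $\int_\Omega R^\varepsilon g$ by parts. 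The same cancellation as in Step~1, now on both sides of the duality pairing, expresses the result as integrals involving $\varepsilon B$, $\varepsilon w$, $\varepsilon B^T$, $\varepsilon w^T$ paired against data, all of size $\varepsilon^\nu$, together with the boundary trace term that is controlled likewise.

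Points (ii) and (iii) reduce to abstract, uniform-in-$\varepsilon$ interior $W^{1,p}$ and Lipschitz estimates for operators of the form $-\div(a(\cdot/\varepsilon)\nabla\cdot)$. As in Avellaneda--Lin, these are proved by a compactness-plus-iteration scheme: at every dyadic scale $r\gtrsim\varepsilon$, one establishes a one-step improvement lemma saying that the solution is better approximated by a solution of the constant-coefficient equation $-\div(a^*\nabla\cdot)=\text{RHS}$, for which Schauder $C^{1,\alpha}$ estimates apply. The compactness needed to extract such a limit is delivered exactly by the uniform $H$-convergence of $a(\cdot/\varepsilon)$ to $a^*$, which is precisely the content of Proposition~\ref{prop:uniform-H-cv} under Assumptions~\ref{H5}--\ref{H6}. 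Applied to the equation for $R^\varepsilon$, together with the $L^p$ (resp.~$C^{0,\beta}$) bounds on $H^\varepsilon$ from Step~1 and Calderón--Zygmund (resp.~Schauder) estimates on $u^*$, this yields \eqref{eq:cv_W1p_general} and \eqref{eq:cv_lipschitz_general}. The factor $\ln(2+\varepsilon^{-1})$ in the Lipschitz bound arises because $\nabla^2 u^*$ is only $C^{0,\beta}$ on the open set $\Omega$ with possibly singular Hölder seminorm at $\partial\Omega$, and the Schauder bound at scale $\varepsilon$ then picks up a logarithmic penalty.

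The main obstacle is the abstract one-step improvement lemma underlying the Lipschitz estimate: one must show that for any sequences $\varepsilon_n\to 0$ and $y_n\in\RR^d$, solutions of $-\div(a(y_n+\cdot/\varepsilon_n)\nabla\cdot) = \text{RHS}$ on a fixed ball converge strongly enough to a solution of the constant-coefficient $a^*$-equation so that a $C^{1,\alpha}$ compactness argument can be iterated. Assumptions~\ref{H5}--\ref{H6} are designed precisely to supply this translation-invariant uniform $H$-convergence, and once the abstract improvement lemma is established the remainder of the argument is a direct transcription of the periodic proofs in \cite{AvellanedaLin,KLSGreenNeumann}.
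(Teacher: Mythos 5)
Your high-level picture matches the paper's: write the equation $-\div(a(\cdot/\varepsilon)\nabla R^\varepsilon)=\div(H^\varepsilon)$ (Proposition~\ref{pr:equation-reste}), use Assumptions~\ref{H7}--\ref{H8} to get $\|H^\varepsilon\|\lesssim\varepsilon^\nu\|D^2u^*\|$ (Lemma~\ref{lm2.7}), and obtain uniform-in-$\varepsilon$ regularity for $-\div(a(\cdot/\varepsilon)\nabla\cdot)$ by the Avellaneda--Lin compactness scheme, with Proposition~\ref{prop:uniform-H-cv} replacing periodicity. Your cut-off Caccioppoli argument for the interior bound \eqref{eq:cv_H1_general} is a valid alternative to the paper's decomposition $R^\varepsilon=R_1^\varepsilon+R_2^\varepsilon$; it is even a little more direct, though it still needs the global bound \eqref{eq:cv_L2_general} as input, which is where the difficulty lies.

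Your proposed route to the global $L^2$ bound \eqref{eq:cv_L2_general} is the one piece that does not go through as written. You suggest a direct two-scale duality and assert that ``the boundary trace term \dots is controlled likewise.'' That trace term is precisely the obstruction: since $R^\varepsilon|_{\partial\Omega}=-\varepsilon\sum_j w_{e_j}(\cdot/\varepsilon)\partial_j u^*$ is nonzero, integrating by parts against the $\varepsilon$-adjoint $\psi^\varepsilon$ produces $\int_{\partial\Omega}R^\varepsilon\,(a^T(\cdot/\varepsilon)\nabla\psi^\varepsilon)\cdot n$, for which no uniform $L^1(\partial\Omega)$ control of $\nabla\psi^\varepsilon$ is available from energy estimates alone and the naive bound loses an $\varepsilon^{1/2}$ boundary layer; pairing instead with the constant-coefficient adjoint, as you write, does not close either since $R^\varepsilon$ does not satisfy an $a^*$-equation. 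The paper avoids this entirely: it first proves the pointwise Green function convergence $|G^\varepsilon(x,y)-G^*(x,y)|\le C\varepsilon^\nu|x-y|^{2-d-\nu}$ (Theorem~\ref{th:cv_Green}, via Lemmas~\ref{KLSlem3.2} and~\ref{KLSth3.4}), then deduces $\|u^\varepsilon-u^*\|_{L^s}\lesssim\varepsilon^\nu\|f\|_{L^q}$ by Young--O'Neil (Corollary~\ref{cor:1.14}), and finally adds back the corrector term. This machinery is absent from your sketch. Two smaller inaccuracies: the $W^{1,p}$ bound \eqref{eq:cv_W1p_general} is not obtained by compactness-iteration directly, but by Shen's real-variable method (Lemma~\ref{lmA16}, Proposition~\ref{pr:1}), the compactness argument entering only through the homogeneous Lipschitz estimate (Theorem~\ref{lem16}); and the $\ln(2+\varepsilon^{-1})$ factor in \eqref{eq:cv_lipschitz_general} does not come from any boundary singularity of the H\"older seminorm of $D^2u^*$ (which is finite on $\overline\Omega$ by Schauder theory), but from the estimate $|\nabla_x\nabla_y G^\varepsilon(x,y)|\lesssim|x-y|^{-d}$ in Proposition~\ref{pr:2}, where splitting the representation integral at scale $\varepsilon$ gives $\int_{B(0,2R)\setminus B(x,\varepsilon)}|x-z|^{-d}\,dz\sim\ln(R/\varepsilon)$, present even for $f\in C^\infty_c(\Omega)$.
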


The proof of Theorem~\ref{th:general} will consist in applying the strategy of proof of \cite{AvellanedaLin} and
\cite{KLSGreenNeumann}, which were originally restricted to the periodic case. Here, periodicity is replaced by
Assumptions~\ref{H3} through \ref{H8}. The proofs follow those of
\cite{AvellanedaLin,KLSGreenNeumann}, but we need to everywhere keep track of the use of assumptions \ref{H3} through
\ref{H8}, and check that these properties are sufficient to proceed at each step of the arguments. 

\begin{remarque}
  \label{rq:bord-general}
  As we already pointed out in Remark~\ref{rq:bord} for the specific case of localized defects, in
  Theorem~\ref{th:general}, the assumption that $\Omega$ is of class $C^{2,1}$ is, here again, only needed for the estimate $\|\nabla R^\varepsilon\|_{L^{\infty}(\Omega)}$.
\end{remarque}

Given this result, it is clear that proving Theorem~\ref{th:defaut} amounts to proving that, in the case of a
defect, Assumptions \ref{H1} through \ref{H8} are satisfied with $\nu = \nu_r$ defined by \eqref{eq:nu_r}. 

\bigskip

Our article is organized as follows. In Section~\ref{sec:preliminaries}, we start with some comments on 
Assumptions \ref{H1} through \ref{H8}. Then we study the existence and uniqueness of the potential $B$, and we
relate it to the remainder $R^\varepsilon$, using \eqref{eq:equation-reste}-(\ref{eq:H-epsilon}), that is,
\begin{displaymath}
  -\div\left(a\left(\frac x \varepsilon\right) \nabla R^\varepsilon\right) = \div\left(H^\varepsilon\right),
\end{displaymath}
with
\begin{displaymath}
  H^\varepsilon_i(x) = \varepsilon \sum_{j,k=1}^d a_{ij}\left(\frac{x}{\varepsilon}\right) w_{e_k}\left( \frac{x}{\varepsilon}\right) \partial_j\partial_ku^*(x)
    -\varepsilon \sum_{j,k=1}^d  B^{ij}_k\left(\frac{x}{\varepsilon}\right)\partial_j\partial_k u^*(x),
\end{displaymath}
Our method to prove estimates on $R^\varepsilon$ relies on some regularity properties of the operator
$-\div(a(x/\varepsilon)\nabla \cdot)$ on the one hand, and bounds on the right-hand side $H^\varepsilon$ on the
other hand. In Section~\ref{sec:homogeneous-case}, we prove
such regularity estimates in the homogeneous case (that is, if the right-hand side is $0$). In Section~\ref{sec:inhomogeneous-case}, we extend these
results to the inhomogeneous case. Finally, in Section~\ref{sec:proof}, we conclude the proof of
Theorem~\ref{th:general} (abstract setting) and that of Theorem~\ref{th:defaut} (local defects).

\section{Preliminaries}
\label{sec:preliminaries}
\subsection{Some remarks on our assumptions}

\paragraph{Alternative formulations of our Assumptions.}

Assume \ref{H1}, \ref{H2} and \ref{H3}. Then, it is clear that Assumptions~\ref{H4} and
\ref{H5} are equivalent to
\begin{displaymath}
  \nabla w_p \left(\frac x {\varepsilon_n} + y_n\right) \mathop{\longrightharpoonup}_{n\to+\infty} 0
  \quad\text{in}\quad L^2(\mathcal D), 
\end{displaymath}
for any bounded Lipschitz domain $\mathcal D$, any $p\in\RR^d$, and for any sequences $\left(y_n\right)_{n\in\NN}$ and $\varepsilon_n\to 0$. 

Similarly, if Assumptions~\ref{H1}, \ref{H2} and \ref{H3} are satisfied, Assumptions \ref{H4} and
\ref{H6} are equivalent to
\begin{displaymath}
  a\left(\frac x{\varepsilon_n}+ y_n\right)\left(p+\nabla w_p \left(\frac x {\varepsilon_n} + y_n\right)\right)
  \mathop{\longrightharpoonup}_{n\to+\infty} a^*p \quad\text{in}\quad L^2(\mathcal D), 
\end{displaymath}
for any bounded Lipschitz domain $\Omega$, any $p\in\RR^d$, and for any sequences $\left(y_n\right)_{n\in\NN}$ and $\varepsilon_n\to 0$. 

\medskip

Another important point is that Assumptions~\ref{H4} and \ref{H5} are in fact equivalent to some strict sublinearity
condition at infinity for the corrector:
\begin{lemme}\label{lm:rq-sous-linearite}
  Assume that the matrix-valued coefficient $a$ satisfies Assumptions~\ref{H1} and \ref{H3}. Then, it satisfies Assumptions~\ref{H4} and \ref{H5}
  if and only if
  \begin{equation}
    \label{eq:ss-linearite1}
    \forall p\in \RR^d, \quad \lim_{|x|\to +\infty} \left(\sup_{y\in\RR^d} \frac{|w_p(x+y)-w_p(y)|}{1+|x|}\right) = 0.
  \end{equation}
\end{lemme}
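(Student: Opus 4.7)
The plan is to prove the two implications separately. A preliminary observation, used on both sides, is that $w_p\in H^1_{\rm loc}(\RR^d)$ solves $-\div(a(p+\nabla w_p))=0$ with $a$ uniformly elliptic and bounded by H1, so by De Giorgi--Nash--Moser $w_p$ is locally H\"older continuous and has well-defined pointwise values.

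For the implication (sublinearity)$\Rightarrow$(H4 and H5), I would first upgrade sublinearity to a uniform linear-growth bound: there exists $C>0$ such that $|w_p(z)-w_p(y)|\le C(1+|z-y|)$ for all $y,z\in\RR^d$. For $|z-y|$ above the threshold $R_1$ associated with $\delta=1$ in \eqref{eq:ss-linearite1} this is immediate; for $|z-y|<R_1$ one inserts an auxiliary point $w$ with $|w-y|,|w-z|\ge R_1$ and applies the triangle inequality. Caccioppoli's inequality applied to $w_p-w_p(y)$ on $B(y,2)$ (which solves the same equation) then gives
\begin{displaymath}
\int_{B(y,1)}|\nabla w_p|^2 \le C\int_{B(y,2)}|w_p-w_p(y)|^2 + C|p|^2 \le C(p),
\end{displaymath}
uniformly in $y$, which is H4. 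For H5, the Stokes formula on the cube $Q_R(y_n):=y_n+[0,R]^d$ with $R=1/\varepsilon_n$ yields
\begin{displaymath}
\int_{Q_R(y_n)}\partial_i w_p\,dz = \int_{[0,R]^{d-1}} \left[w_p(z'+Re_i)-w_p(z')\right]dz',
\end{displaymath}
and sublinearity bounds the integrand by $\psi(R)(1+R)$ with $\psi(R)\to 0$. Therefore $\fint_{Q_R(y_n)}\nabla w_p= O(\psi(R))\to 0$ uniformly in $y_n$, which, by change of variables, is precisely H5.

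For the converse (H4 and H5)$\Rightarrow$(sublinearity), I would argue by contradiction and compactness after rescaling. Assume sublinearity fails: there are $\delta>0$ and sequences $(x_n),(y_n)$ with $|x_n|\to\infty$ and $|w_p(x_n+y_n)-w_p(y_n)|\ge\delta(1+|x_n|)$. Setting $\varepsilon_n=1/|x_n|\to 0$ and $z_n=x_n/|x_n|$, the rescaled functions
\begin{displaymath}
W_n(z):=\varepsilon_n\bigl(w_p(z/\varepsilon_n+y_n)-w_p(y_n)\bigr)
\end{displaymath}
satisfy $W_n(0)=0$, $|W_n(z_n)|\ge\delta$, and solve $-\div(a_n\nabla W_n)=\div(a_n p)$ with $a_n(z):=a(z/\varepsilon_n+y_n)$ inheriting the ellipticity constants from $a$. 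Assumption H4 shows that $\nabla W_n$ is bounded in $L^2_{\rm loc}(\RR^d)$; combined with H1 and De Giorgi--Nash--Moser, $(W_n)$ is uniformly bounded in $C^{0,\alpha}_{\rm loc}$. Arzel\`a--Ascoli then yields a subsequence $W_n\to W_0$ in $C_{\rm loc}$ and weakly in $H^1_{\rm loc}$. The alternative formulation recalled just above the lemma (H4+H5 equivalent to weak $L^2_{\rm loc}$ convergence $\nabla w_p(\cdot/\varepsilon_n+y_n)\rightharpoonup 0$) forces $\nabla W_0\equiv 0$; together with $W_0(0)=0$, this gives $W_0\equiv 0$. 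Extracting further so that $z_n\to z^*$ on the unit sphere, we get $W_n(z_n)\to W_0(z^*)=0$, contradicting $|W_n(z_n)|\ge\delta$.

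The main obstacle is obtaining the uniform $L^\infty_{\rm loc}$ bound on $(W_n)$ needed to run Arzel\`a--Ascoli: the normalization $W_n(0)=0$ and the $L^2_{\rm loc}$ bound on $\nabla W_n$ only control ball-averages via Poincar\'e. Upgrading to pointwise bounds requires the full De Giorgi--Nash--Moser theorem applied to the $W_n$ with constants that are uniform in $n$, which is why we need the uniform ellipticity H1 (the constants depend only on $\mu$ and the $L^\infty$ bound of $a$, both inherited by $a_n$). Equivalently, in the un-rescaled picture, one chains through $O(1/\varepsilon_n)$ unit balls joining $y_n$ to $z/\varepsilon_n+y_n$, using Caccioppoli--Poincar\'e--DGNM on each ball to control the local oscillation of $w_p$ by a position-independent constant, which yields $|w_p(z/\varepsilon_n+y_n)-w_p(y_n)|=O(1/\varepsilon_n)$ and hence the required $L^\infty_{\rm loc}$ bound on $W_n$.
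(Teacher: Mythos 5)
Your proof is correct and follows essentially the same strategy as the paper's: for (\ref{eq:ss-linearite1})$\Rightarrow$(H4,H5) you upgrade sublinearity to uniform linear growth and then apply Caccioppoli for H4 and the divergence theorem on a large cube for H5; for the converse you rescale, invoke H5 to kill the limiting gradient, and use De Giorgi--Nash--Moser compactness to reach a contradiction. The only noteworthy variation is in how you obtain the uniform linear-growth bound for small $|z-y|$: the paper applies the maximum principle to $w_p(x+y)-w_p(y)+p\cdot x$ on $B(0,A)$ (exploiting that it solves a homogeneous equation), whereas you insert an auxiliary intermediate point and use the triangle inequality, or alternatively chain through unit balls with Caccioppoli--Poincar\'e--DGNM; both yield the needed bound with constants depending only on $a$ and $p$, so the choice is a matter of taste.
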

\begin{proof} We first assume that Assumptions~\ref{H4} and \ref{H5} are satisfied and prove
  \eqref{eq:ss-linearite1} using a contradiction argument. If \eqref{eq:ss-linearite1} does not hold, then there
  exists two sequences $(y_n)_{n\in\NN}$ and $(x_n)_{n\in\NN}$ such that
  \begin{displaymath}
    |x_n|\mathop{\longrightarrow}_{n\to+\infty} + \infty, \quad \text{and}\quad \frac{|w_p(x_n+y_n)-w_p(y_n)|}{1+|x_n|}
    \geq \gamma >0,
  \end{displaymath}
where $\gamma$ does not depend on $n$. Defining $\varepsilon_n = |x_n|^{-1}$ and $\overline x_n = x_n/|x_n|$, this
inequality implies
\begin{displaymath}
  \varepsilon_n \to 0, \quad |\overline x_n| = 1, \quad \varepsilon_n\left|w_p\left(\frac{\overline x_n}{\varepsilon_n}+y_n\right)-w_p(y_n)\right|
    \geq \gamma >0.
\end{displaymath}
Hence, defining $\displaystyle v_n(x) := \varepsilon_n \left(w_p\left(\frac{
      x}{\varepsilon_n}+y_n\right)-w_p(y_n)\right)$, we have
\begin{equation}
  \label{eq:90}
  v_n(0) = 0, \quad |v_n(\overline x_n)| \geq \gamma>0, \quad |\overline x_n| = 1.
\end{equation}
Moreover, Assumption~\ref{H5} implies
\begin{equation}
  \label{eq:4}
  \nabla v_n = \nabla w_p\left(\frac{\cdot}{\varepsilon_n}+y_n\right) \mathop{\longrightharpoonup}_{n\to +\infty} 0
  \quad \text{in}\quad L^2_{\rm loc}(\RR^d).
\end{equation}
Since $-\div\left(a\left(\frac x{\varepsilon_n} +y_n\right)\nabla \left(v_n(x)+p\cdot x\right)\right) = 0$,
Nash-Moser estimates  \cite[Theorem 8.24]{GT} imply that $v_n$ is bounded $C^{0,\beta}(B(0,2))$ for some
$\beta>0$. Hence, up to extracting a subsequence, it converges in $C^0(\overline B(0,1))$ to some $v\in C^0(\overline B(0,1))$. Now, extracting a
subsequence once again, we have $\overline x_n\to \overline x$, with $|\overline x|=1$. Hence, \eqref{eq:90} implies
\begin{displaymath}
  v(0) = 0, \quad |v(\overline x) |\geq \gamma>0, \quad |\overline x|=1.
\end{displaymath}
Since \eqref{eq:4} implies $\nabla v = 0$, we have reached a contradiction.

 Conversely, if (\ref{eq:ss-linearite1}) is satisfied, then there exists $A>0$ such that
 \begin{displaymath}
   \forall x\in B(0,A)^C, \quad \forall y\in \RR^d, \quad |w_p(x+y)-w_p(y)|\leq 1+|x|.
 \end{displaymath}
If necessary, we can take $A$ large enough to have $A\geq 2$.
In particular, we have $|w_p(x+y)- w_p(y) + p\cdot x|\leq 1+A+A|p|$ on $\partial B(0,A)$. Recalling that
\begin{displaymath}
  -\div_x\left[a(x+y)\nabla_x\left(w_p(x+y)-w_p(y)+p\cdot x\right)\right] = 0,
\end{displaymath}
in $B(0,A)$, this implies that 
\begin{displaymath}
  \forall x\in B(0,A), \quad |w_p(x+y)- w_p(y) + px|\leq 1+A+A|p|.
\end{displaymath}
Then, we apply the Caccioppoli inequality, which gives a constant $C$ depending only on the coefficient $a$ such that
\begin{multline*}
  \int_{B(y,1)} |\nabla w_p(z)+p|^2 dz \leq C \int_{B(y,2)} |w_p(z)-w_p(y)+ p\cdot(z-y)|^2dz \\= C\int_{B(0,2)}
  |w_p(x+y)-w_p(y)+p\cdot x|^2 dx \leq C \left(1+A+A|p|\right)\, |B(0,2)|.
\end{multline*}
This implies Assumption~\ref{H4}. In order to prove
  Assumption~\ref{H5}, we integrate by parts, finding
  \begin{multline*}
    \int_Q \nabla w_p \left(\frac x {\varepsilon_n} + y_n\right) dx = \int_{\partial Q} \varepsilon_n
    w_p\left(\frac x {\varepsilon_n} + y_n\right) n(x)dx \\= \sum_{j=1}^d \int_{\partial Q_j^+} \varepsilon_n
    \left[w_p\left(\frac x {\varepsilon_n} + y_n\right) - w_p\left(-\frac x {\varepsilon_n} + y_n\right)\right]n(x)dx.
  \end{multline*}
Here, $\partial Q_j^\pm$ denotes the faces of the cube $Q$, namely the set of equations $\{|x_k|<1/2 \ k\neq j, \
x_j=\pm 1/2\},$ and $n(x)$ is the outer normal to $Q$ at point $x$. Applying (\ref{eq:ss-linearite1}), we find \ref{H5}. 
\end{proof}

\paragraph{Logical links between our assumptions.} We have the following logical links between the assumptions
\begin{lemme}
  \label{lm:rq2}
  Assume that the matrix-valued coefficient $a$ satisfies Assumptions~\ref{H1} and \ref{H3}.
  \begin{enumerate}
  \item\label{item:1} If it satisfies Assumption~\ref{H7}, then it satisfies Assumptions~\ref{H4} and \ref{H5}.
  \item\label{item:2} If it satisfies Assumption~\ref{H8}, then it satisfies
    Assumption \ref{H6}.
  \end{enumerate}
\end{lemme}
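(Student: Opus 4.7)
For the first assertion, the plan is to invoke Lemma~\ref{lm:rq-sous-linearite}, which rephrases \ref{H4} and \ref{H5} as the strict sublinearity condition (\ref{eq:ss-linearite1}). Assumption~\ref{H7} yields $|w_{e_k}(x+y)-w_{e_k}(y)|\le C|x|^{1-\nu}$ whenever $|x|\ge 1$, uniformly in $y\in\RR^d$; dividing by $1+|x|$ produces a bound $\le C|x|^{-\nu}$ which vanishes as $|x|\to+\infty$. Since the corrector equation is linear in $p$, one sets $w_p := \sum_k p_k w_{e_k}$, and the sublinearity transfers to every $p\in\RR^d$. Lemma~\ref{lm:rq-sous-linearite} then delivers \ref{H4} and \ref{H5}.

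For the second assertion, the goal is to verify \ref{H6}, which by the definition (\ref{eq:Mik}) of $M_k$ amounts to proving that
\[
 \int_Q M_k^j\!\left(\frac{x}{\varepsilon_n}+y_n\right) dx \longrightarrow 0
\]
for every $j,k$ and every sequences $\varepsilon_n\to 0$, $(y_n)_{n\in\NN}$ (the general-$p$ version then follows by linearity). The key move is to exploit (\ref{eq:B-equation}), namely $M_k^j = \sum_i \partial_i B_k^{ij}$. Chain-rule scaling converts this into
\[
 M_k^j\!\left(\frac{x}{\varepsilon_n}+y_n\right) = \varepsilon_n \sum_i \partial_{x_i}\!\left[B_k^{ij}\!\left(\frac{x}{\varepsilon_n}+y_n\right)\right],
\]
and the divergence theorem on the unit cube $Q$ rewrites the integral as $\varepsilon_n$ times a boundary integral over $\partial Q$.

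Next, I would subtract the constant $B_k^{ij}(y_n)$ from each integrand; this does not alter the value of the boundary integral, since $\int_{\partial Q}n_i\,d\sigma=0$ for every $i$. On $\partial Q$ one has $|x|\ge 1/2$, so for $\varepsilon_n$ small enough the point $x/\varepsilon_n$ lies at distance $\ge 1$ from the origin, and Assumption~\ref{H8} bounds the centered integrand by $C(|x|/\varepsilon_n)^{1-\nu}\le C'\varepsilon_n^{-(1-\nu)}$. Multiplying by the $\varepsilon_n$ prefactor yields an $O(\varepsilon_n^{\nu})$ bound on the whole integral, which tends to $0$ as $n\to+\infty$ (the case $\nu>0$ being the one relevant for the intended applications of the lemma).

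The only subtle point is recognizing that the boundary integral annihilates the additive constant in $B$, so that the quantitative sublinear growth provided by \ref{H8} suffices to close the estimate; everything else is bookkeeping with the divergence theorem. No weak compactness, $H$-convergence, or further elliptic regularity is needed beyond what is already encoded in \ref{H7} and \ref{H8} themselves.
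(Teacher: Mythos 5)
Your proof is correct and follows essentially the same route as the paper: for the first assertion, both pass through Lemma~\ref{lm:rq-sous-linearite} by noting that the quantitative bound in Assumption~\ref{H7} trivially implies the strict sublinearity condition \eqref{eq:ss-linearite1}; for the second, both apply the divergence theorem on $Q$ to write $\int_Q M_k^j$ as an $\varepsilon_n$-weighted boundary integral of $B$, subtract the constant $B_k^{ij}(y_n)$ (harmless since the flux of a constant through $\partial Q$ vanishes), and then invoke Assumption~\ref{H8} on $\partial Q$ to get an $O(\varepsilon_n^\nu)$ bound.
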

\begin{proof}
  We first prove Assertion \ref{item:1}: if \ref{H7} holds, then clearly \eqref{eq:ss-linearite1} is
  satisfied. Hence, applying Lemma~\ref{lm:rq-sous-linearite}, we have \ref{H4} and \ref{H5}.

 As for Assertion \ref{item:2}, $B$ satisfies
  (\ref{eq:B-equation}), hence
  \begin{multline}\label{eq:91}
    \int_Q \left[\sum_{j=1}^d a_{ij}\left(\frac x {\varepsilon_n} + y_n\right) \left(\delta_{jk}+\partial_j w_{e_k}\left(\frac x
        {\varepsilon_n} + y_n \right)\right) - a_{jk}^*\right]dx = \int_Q \sum_{j=1}^d\partial_j
  B_k^{ij}\left(\frac x {\varepsilon_n} + y_n\right) dx \\ 
  =\sum_{j=1}^d\int_{\partial Q} \varepsilon_n\left(
  B_k^{ij}\left(\frac x {\varepsilon_n} + y_n\right) - B_k^{ij}(y_n)\right) e_j\cdot n(x) dx, 
  \end{multline}
where $n(x)$ is the outer normal to $Q$ at point $x$. Applying Assumption~\ref{H8}, we have, for any $x\in \partial
Q$, 
\begin{displaymath}
  \varepsilon_n\left|B_k^{ij}\left(\frac x {\varepsilon_n} + y_n\right) - B_k^{ij}(y_n) \right| \leq C
  \varepsilon_n |x|^{1-\nu}\varepsilon_n^{\nu-1} \leq C|x|\varepsilon_n^\nu.
\end{displaymath}
Inserting this estimate into~\eqref{eq:91}, we prove Assumption~\ref{H6}.
\end{proof}

\begin{remarque}
  The above proof implies that, if $B$ satisfies \eqref{eq:ss-linearite1}, that is,
\begin{equation}
  \label{eq:ss-linearite2}
  \lim_{|x|\to +\infty} \left(\sup_{y\in\RR^d} \frac{|B(x+y)-B(y)|}{1+|x|}\right) = 0,
\end{equation}
then it satisfies Assumption~\ref{H6}. Indeed, \eqref{eq:ss-linearite2} is sufficient, with \eqref{eq:91}, to prove
\ref{H6}. 
\end{remarque}



\paragraph{Uniform H-convergence.}

First, we prove that under Assumptions \ref{H1} through \ref{H6}, we have a
uniform H-convergence property, in the sense of Definition~\ref{def:uniform-H-cv}:
\begin{proposition}\label{prop:uniform-H-cv}
  Assume that the matrix-valued coefficient $a$ satisfies Assumptions \ref{H1} through \ref{H6}. Then, for any sequence
  $\left(y_n\right)_{n\in\NN}$ of $\RR^d$ and any sequence $\left(\varepsilon_n\right)_{n\in\NN}$ of positive
  numbers such that $\varepsilon_n \to 0$, and any bounded domain $\Omega$, the coefficient $a\left(\frac x {\varepsilon_n} + y_n\right)$ H-converges to
  $a^*$ on $\Omega$, where $a^*$ is defined by Assumption~\ref{H6}. 
\end{proposition}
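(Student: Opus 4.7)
The plan is Tartar's method of oscillating test functions. Set $a_n(x) := a(x/\varepsilon_n + y_n)$ and fix a bounded domain $\Omega$. Assumption~\ref{H1} yields uniform ellipticity of $\{a_n\}$ with bounds independent of $n$, so by the classical compactness of H-convergence (Murat--Tartar) any subsequence admits a further subsequence H-converging on $\Omega$ to some uniformly elliptic matrix-valued coefficient $\tilde a$. Since H-limits are unique, it suffices to show $\tilde a = a^*$ almost everywhere; that forces the whole sequence to H-converge to $a^*$.

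For each $p\in\RR^d$ I would introduce
\begin{equation*}
v_n^p(x) := \varepsilon_n \left[w_p\!\left(\tfrac{x}{\varepsilon_n}+y_n\right) - w_p(y_n)\right], \qquad \phi_n^p(x) := p\cdot x + v_n^p(x),
\end{equation*}
and verify the three Tartar ingredients: (i) by Assumption~\ref{H3} applied at the translated argument, $-\div(a_n \nabla \phi_n^p) = 0$ in $\RR^d$; (ii) combining Lemma~\ref{lm:rq-sous-linearite} (strict sublinearity of $w_p$, uniformly in translations) with the alternative formulation of Assumptions~\ref{H4}--\ref{H5} recorded earlier in this subsection gives $v_n^p \to 0$ locally uniformly and $\nabla v_n^p \rightharpoonup 0$ in $L^2_{\rm loc}$, so $\phi_n^p \rightharpoonup p\cdot x$ weakly in $H^1_{\rm loc}(\Omega)$; (iii) by the alternative formulation of Assumption~\ref{H6}, the flux $a_n(p+\nabla v_n^p) \rightharpoonup a^*p$ weakly in $L^2_{\rm loc}(\Omega)$.

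Items (i)--(iii) are precisely the conditions for $\phi_n^p$ to serve as an oscillating test function identifying the H-limit in direction $p$. The standard compensated-compactness/div--curl argument then compares the two available weak limits of the flux $a_n\nabla\phi_n^p$: the direct one $a^*p$ provided by (iii), and the one $\tilde a\,\nabla(p\cdot x) = \tilde a p$ dictated by H-convergence to $\tilde a$. They must coincide, so $\tilde a(x)\,p = a^*p$ for every $p\in\RR^d$ and almost every $x\in\Omega$, i.e.\ $\tilde a = a^*$ a.e., and the full sequence H-converges to $a^*$.

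The core difficulty is the interior identification in the last step, since $\phi_n^p$ does not vanish on $\partial\Omega$; one handles it by localizing with a cut-off $\zeta\in C_c^\infty(\Omega)$ and exploiting that $\div(a_n\nabla\phi_n^p)=0$ is trivially precompact in $H^{-1}_{\rm loc}$, together with the strong $L^2_{\rm loc}$ convergence of $\phi_n^p$ coming from Lemma~\ref{lm:rq-sous-linearite}. The remaining work is bookkeeping: each invocation of \ref{H4}--\ref{H6} must accommodate the arbitrary shift $y_n$, which is precisely the form in which those assumptions are stated.
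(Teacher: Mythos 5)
Your proof is correct and follows essentially the same route the paper has in mind: the paper's own proof simply remarks that the result is a "standard application of homogenization tools (div-curl lemma in particular, see [JKO, Lemma~1.1])" and emphasizes uniformity in $y_n$, which is precisely the Tartar oscillating-test-function argument you spell out, with the uniformity in $y_n$ entering through Assumptions~\ref{H4}--\ref{H6} and Lemma~\ref{lm:rq-sous-linearite} exactly as you describe. You have usefully fleshed out what the paper leaves implicit (Murat--Tartar compactness, subsequence identification, and the locality of H-convergence), but no genuinely different idea is introduced.
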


\begin{proof}
  This is a standard application of homogenization tools (div-curl lemma in particular, see \cite[Lemma 1.1]{JKO}),
  so we skip it. The only important point is that all the estimates, hence the convergences, are uniform with
  respect to $y_n$.
\end{proof}

The following example proves that~\ref{H6} is not satisfied in general: in dimension $1$, define
\begin{displaymath}
  a(x) =
  \begin{cases}
    2 & \text{ if}\quad  2^n\leq x\leq 2^n +\frac{2^n}{\log(1+|n|)}, \quad n\in \ZZ, \\
    1 & \text{ otherwise.}
  \end{cases}
\end{displaymath}
Then it is clear that $a^* = 1$, and that the corrector is equal to $w' = \frac{1-a}a.$ Hence, using $y_n = 2^n$
and $\varepsilon_n = \log(1+|n|)2^{-n}$, we have
\begin{displaymath}
  \int_0^1 a\left(y_n + \frac x {\varepsilon_n} \right) w'\left(y_n + \frac x {\varepsilon_n} \right)dx =
 -1. 
\end{displaymath}
Hence, Assumption~\ref{H6} is not satisfied.

\paragraph{The matrix-valued coefficients $a$ and $a^T$.}

If the matrix-valued coefficient $a$ is not symmetric, we will in the sequel need to assume that both $a$ and $a^T$ satisfy assumptions \ref{H3}
through \ref{H8} (note that \ref{H1} and \ref{H2} are stable under transposition of $a$). 

In full generality,
the existence of strictly sublinear  correctors satisfying Assumptions~\ref{H4} and \ref{H5} for the coefficient
$a$ does not imply the existence of correctors for the adjoint coefficient $a^T$ satisfying the same properties, as
the following two-dimensional counter-example shows it. Note that it extends mutatis mutandis to any dimension
$d\geq 3$.

Consider
\begin{displaymath}
  a(x_1,x_2) =
  \begin{pmatrix}
    1 & \gamma(x_2) \\ 0 & 1
  \end{pmatrix},
\end{displaymath}
where $\gamma\in L^\infty(\RR)$, and $|\gamma|\leq 1,$ so that $a$ is indeed uniformly elliptic. Then $\div(a e_1) = \div(a e_2) = 0$, hence the correctors associated with $a$ are
all equal to $0$. We also compute
\begin{displaymath}
  \div\left(a^T e_1\right) = \gamma'(x_2). 
\end{displaymath}
Assume that $a^T$ admits a corrector for the vector $e_1$, and that it sastisfies \ref{H4} and \ref{H5}. We denote it by $w_{e_1}^T$. It is solution to
\begin{displaymath}
  \partial_1^2 w_{e_1}^T + \partial_2 \left(\gamma(x_2)\left(\partial_1 w_{e_1}^T+1\right)\right) + \partial_2^2
  w_{e_1}^T = 0.
\end{displaymath}
Hence, $v = \partial_1 w_{e_1}^T$ is a solution to $\Delta v + \partial_2\left(\gamma(x_2)\partial_1 v\right) =
0$. This is an elliptic equation, and $v\in L^2_{\rm unif}(\RR^d)$ according to \ref{H4}. Hence, applying the
Liouville theorem, $v$ is a constant. If this constant is not $0$, then $w_{e_1}^T$ cannot be sublinear at
infinity. Hence $v = 0$, which means that $w_{e_1}^T$ depends only on $x_2$. Hence $\partial_2^2 w_{e_1}^T =- \gamma'(x_2)$. This implies
\begin{displaymath}
  w_{e_1}^T(x_1,x_2) = C_2 - \int_0^{x_2} \left(\gamma(z)+C_1\right)dz.
\end{displaymath}
We choose for $\gamma$ the function
\begin{displaymath}
  \gamma = \chi * \gamma_0, \quad \text{with} \quad \gamma_0(z) = \frac12 \sum_{n\in\NN} \left(\mathbf{1}_{[2^{2n+1},2^{2n+2}]}(z)+\mathbf{1}_{[-2^{2n+2},-2^{2n+1}]}(z)\right),
\end{displaymath}
where $\chi$ is a smooth compactly-supported function such that $0\leq \chi\leq 1$ and $\int\chi =1$. For this
$\gamma$, it is easily seen that $w_{e_1}^T$ cannot be strictly sublinear at infinity. 

\paragraph{On the value of $\nu_r$.}

Let us point out that the value \eqref{eq:nu_r} of $\nu_r$ is optimal in the following sense: first, in the
periodic case, we recover the results of \cite{KLSGreenNeumann} (with $\nu_r=1$, that is, both the correctors and
the potential are bounded). Second, we have the following
example, in dimension one, in which $\left|\left(R^\varepsilon\right)'\right|$ is bounded from below, up to a
logarithmic term, by $\varepsilon^{\nu_r}$. It is unclear to us whether a similar example can, or not, be
constructed in higher dimensions. It however strongly suggests that the convergence rate stated is sharp.

Consider
\begin{displaymath}
  \widetilde a\in L^r(\RR), \quad 0\leq \widetilde a\leq 1, \quad \widetilde a(x) = \widetilde a(-x), \quad \text{and}\quad a^{per}
  = 1. 
\end{displaymath}
Then $a^*=1$, and the corrector is easily seen to be equal to
\begin{displaymath}
  w(x) = \widetilde w(x) = -\int_0^x \frac{\widetilde a(z)}{1+\widetilde a(z)}dz.
\end{displaymath}
In the special case $f=1$, if we solve \eqref{eq:equation} and \eqref{eq:homog_periodique} with $\Omega = ]-1,1[$, one easily computes
\begin{displaymath}
  \left(u^\varepsilon\right)'(x) = -\frac x {1+\widetilde a\left(\frac x \varepsilon\right)}, \quad \left(u^*\right)'(x)
  = -x. 
\end{displaymath}
Hence, computing $\left(R^\varepsilon\right)'$, we have
\begin{multline*}
  \left(R^\varepsilon\right)'(x) = \left(u^\varepsilon\right)'(x) - \left(u^*\right)'(x) - \varepsilon w\left(\frac
    x \varepsilon\right) \left(u^*\right)''(x) - w'\left(\frac x \varepsilon\right) \left(u^*\right)'(x)
\\= -\frac x {1+\widetilde a\left(\frac x \varepsilon\right)} + x -
  \varepsilon\int_0^{x/\varepsilon}\frac{\widetilde a(z)}{1+\widetilde a(z)}dz - x \frac{\widetilde a\left(\frac x
      \varepsilon\right)}{1+\widetilde a\left(\frac x \varepsilon\right) } = - \varepsilon  \int_0^{x/\varepsilon}\frac{\widetilde a(z)}{1+\widetilde a(z)}dz.
\end{multline*}
Hence, since $0\leq \widetilde a \leq 1$,
\begin{displaymath}
  \left|\left(R^\varepsilon\right)'(x)\right| \geq \frac \varepsilon 2 \int_0^{x/\varepsilon} \widetilde a(z)dz. 
\end{displaymath}
Using
\begin{displaymath}
  \widetilde a(z) = \frac 1 {\left(1+|z|\right)^{1/r} \left(1+\log(1+|z|)^{1+\delta}\right)^{1/r}}, \quad \delta >0,
\end{displaymath}
we find that, if $x>0$,
\begin{multline*}
  \left|\left(R^\varepsilon\right)'(x)\right| \geq \frac\varepsilon 2 \int_0^{x/\varepsilon}
  \frac{dz}{\left(1+|z|\right)^{1/r} \left(1+\log(1+|z|)^{1+\delta}\right)^{1/r}} \\ \geq \frac\varepsilon 2 \int_0^{x/\varepsilon}
  \frac{dz}{\left(1+|x|/\varepsilon\right)^{1/r} \left(1+\log(1+|x|/\varepsilon)^{1+\delta}\right)^{1/r}} \\= \frac\varepsilon 2 \frac{|x|}\varepsilon
  \frac{1}{\left(1+|x|/\varepsilon\right)^{1/r} \left(1+\log(1+|x|/\varepsilon)^{1+\delta}\right)^{1/r}}
\geq C\varepsilon^{1/r} \log\left(\varepsilon^{-1}\right)^{-(1+\delta)/r}.
\end{multline*}
Hence, estimate \eqref{eq:cv_H1} is optimal, up to logarithmic terms.


\subsection{Equation satisfied by the remainder}

We now prove
\begin{proposition}\label{pr:equation-reste}
  Assume~\ref{H1}, \ref{H3}, \ref{H4}, and that there exists $B\in W^{1,1}_{\rm loc}\left(\RR^d\right)$ solution to
  (\ref{eq:B-antisymetrique})-(\ref{eq:B-equation}). Then $R^\varepsilon$ defined by (\ref{eq:reste}) solves
\begin{equation}
  \label{eq:equation-reste}
  -\div\left(a\left(\frac x \varepsilon\right) \nabla R^\varepsilon\right) = \div\left(H^\varepsilon\right),
\end{equation}
\begin{equation}
  \label{eq:H-epsilon}
  H^\varepsilon_i(x) = \varepsilon \sum_{j,k=1}^d a_{ij}\left(\frac{x}{\varepsilon}\right) w_{e_k}\left( \frac{x}{\varepsilon}\right) \partial_j\partial_ku^*(x)
    -\varepsilon \sum_{j,k=1}^d  B^{ij}_k\left(\frac{x}{\varepsilon}\right)\partial_j\partial_k u^*(x),
\end{equation}
where $w_{e_k}$ is the corrector defined by (\ref{eq:correcteur}) with $p=e_k$, $1\leq k \leq d$.
\end{proposition}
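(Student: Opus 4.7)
The plan is a direct computation based on the two-scale expansion, organized so that the skew-symmetry of $B$ produces the announced cancellation.

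First, I would differentiate the defining formula \eqref{eq:reste} and multiply by $a(x/\varepsilon)$ to assemble the flux. Using the chain rule, I get componentwise
\begin{displaymath}
  (a^{\varepsilon}\nabla R^{\varepsilon})_i = (a^{\varepsilon}\nabla u^{\varepsilon})_i - \sum_{k=1}^d a^{\varepsilon}_{ij}\bigl(\delta_{jk} + (\partial_j w_{e_k})(x/\varepsilon)\bigr)\,\partial_k u^{*} - \varepsilon\sum_{j,k=1}^d a^{\varepsilon}_{ij} w_{e_k}(x/\varepsilon)\,\partial_j\partial_k u^{*},
\end{displaymath}
where $a^\varepsilon(x) := a(x/\varepsilon)$ and summation over the repeated index $j$ is understood in the middle term. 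Then I would use the very definition \eqref{eq:Mik} to rewrite $a_{ij}(\delta_{jk}+\partial_j w_{e_k}) = a^*_{ik} - M_k^i$, so that the middle term becomes $(a^*\nabla u^*)_i - \sum_k M_k^i(x/\varepsilon)\partial_k u^*$.

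Next comes the key algebraic step. Since $B_k^{ij}=-B_k^{ji}$ and $\sum_i \partial_i B_k^{ij} = M_k^j$ by \eqref{eq:B-antisymetrique}--\eqref{eq:B-equation}, one has $M_k^i = -\sum_j \partial_j B_k^{ij}$, and the chain rule gives $M_k^i(x/\varepsilon) = -\varepsilon\sum_j \partial_j\bigl[B_k^{ij}(x/\varepsilon)\bigr]$. Applying the product rule to $B_k^{ij}(x/\varepsilon)\,\partial_k u^*(x)$, I then rewrite
\begin{displaymath}
  \sum_{k=1}^d M_k^i(x/\varepsilon)\partial_k u^*(x) = -\varepsilon\sum_{j,k=1}^d \partial_j\!\left[B_k^{ij}(x/\varepsilon)\partial_k u^*(x)\right] + \varepsilon\sum_{j,k=1}^d B_k^{ij}(x/\varepsilon)\,\partial_j\partial_k u^*(x).
\end{displaymath}
Plugging this back into the flux, the last two summands collect exactly into $-H_i^{\varepsilon}$ as defined by \eqref{eq:H-epsilon}, and I end up with
\begin{displaymath}
  (a^{\varepsilon}\nabla R^{\varepsilon})_i = (a^{\varepsilon}\nabla u^{\varepsilon})_i - (a^*\nabla u^*)_i + \varepsilon\sum_{j,k=1}^d \partial_j\!\left[B_k^{ij}(x/\varepsilon)\partial_k u^*(x)\right] - H_i^\varepsilon.
\end{displaymath}

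Finally I apply $-\div_x$ to this identity. The first two terms contribute $f - f = 0$ thanks to \eqref{eq:equation} and $-\div(a^*\nabla u^*)=f$ (since $a^*$ is constant, so $\div(a^*\nabla u^*)=\sum_{i,j}a^*_{ij}\partial_i\partial_j u^*$). The delicate point, and the one I expect to be the main (though mild) obstacle, is the vanishing of the term coming from $B$: one has to observe that
\begin{displaymath}
  \sum_{i,j=1}^d \partial_i\partial_j\!\left[B_k^{ij}(x/\varepsilon)\partial_k u^*(x)\right] = 0,
\end{displaymath}
which holds because $\partial_i\partial_j$ is symmetric in $(i,j)$ whereas $B_k^{ij}$ is antisymmetric in $(i,j)$ by \eqref{eq:B-antisymetrique}; the distributional regularity $B\in W^{1,1}_{\rm loc}$ assumed in the statement, combined with $u^*\in H^2_{\rm loc}$ (inherited from the constant-coefficient equation), is exactly what is needed to make this identity rigorous. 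What remains is $-\div(a^{\varepsilon}\nabla R^{\varepsilon}) = \div(H^{\varepsilon})$, which is \eqref{eq:equation-reste}.
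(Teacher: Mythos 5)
Your proof is correct and follows essentially the same computation as the paper: both proofs hinge on the definition~\eqref{eq:Mik} of $M_k$, on rewriting the leading term via the potential $B$, and on the cancellation of the third-order term under $\partial_i\partial_j$ by the skew-symmetry~\eqref{eq:B-antisymetrique}. You organize the algebra at the level of the flux vector $a(x/\varepsilon)\nabla R^\varepsilon$ before taking divergence, whereas the paper applies $-\div$ immediately and manipulates scalars, explicitly invoking $\div(M_k)=0$ along the way; your route encodes that fact implicitly through the identity $M_k^i=-\sum_j\partial_j B_k^{ij}$. One small slip: in your assembled flux identity the term $\varepsilon\sum_{j,k}\partial_j\bigl[B_k^{ij}(x/\varepsilon)\partial_k u^*\bigr]$ should appear with a minus sign, not a plus, since substituting your expression for $\sum_k M_k^i(x/\varepsilon)\partial_k u^*$ produces $-\varepsilon\sum_{j,k}\partial_j[\cdots]$. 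This is harmless because the entire term vanishes under $\div$ by the antisymmetry argument anyway, so the final conclusion~\eqref{eq:equation-reste} is unaffected, but the intermediate display as written is off by a sign.
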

\begin{proof} By definition of $R^\varepsilon$, that is, \eqref{eq:reste}, 
\begin{displaymath}
  \nabla R^\varepsilon(x) = \nabla u^\varepsilon(x) - \nabla u^*(x) - \sum_{j=1}^d \partial_j u^*(x)\nabla w_{e_j}\left(\frac x
    \varepsilon\right)  - \varepsilon \sum_{j=1}^d w_{e_j}\left(\frac x \varepsilon\right)
  \nabla \partial_j u^*(x).
\end{displaymath}
We have, using $\displaystyle -\div\left[a\left(\frac x \varepsilon\right)\nabla u^\varepsilon\right] = -\div\left(a^*\nabla u^*\right)$,
\begin{multline*}
  -\div\left(a\left(\frac x \varepsilon\right)\nabla R^\varepsilon\right) = \div\left[\left(a\left(\frac x
        \varepsilon\right) - a^*\right)\nabla u^* + a\left(\frac x \varepsilon\right)\sum_{j=1}^d \partial_j u^*(x)\nabla w_{e_j}\left(\frac x
    \varepsilon\right)  \right. \\ \left.+ \varepsilon\, a\left(\frac x \varepsilon\right) \sum_{j=1}^d w_{e_j}\left(\frac x \varepsilon\right)
  \nabla \partial_j u^*(x) \right],
\end{multline*}
in the sense of distributions. 
Using the definition \eqref{eq:Mik} of $M_k = (M_k^1,\dots ,M_k^d)^T$, this reads as
\begin{displaymath}
  -\div\left(a\left(\frac x \varepsilon\right)\nabla R^\varepsilon\right) =\div\left[- \sum_{k=1}^d \partial_k
    u^*(x)M_k\left(\frac x \varepsilon\right) + \varepsilon\, a\left(\frac x \varepsilon\right) \sum_{j=1}^d w_{e_j}\left(\frac x \varepsilon\right)
  \nabla \partial_j u^*(x) \right]
\end{displaymath}
We concentrate on the first term of the right-hand side, and use $\div(M_k)=0$:
\begin{displaymath}
  \div\left[\sum_{k=1}^d \partial_k u^*(x) M_k\left(\frac x \varepsilon\right)  \right] = \sum_{j=1}^d
  \sum_{k=1}^d  M_k^j\left(\frac x \varepsilon\right) \partial_j\partial_k u^*(x).
\end{displaymath}
We now use the potential $B$ defined by (\ref{eq:B-antisymetrique})-(\ref{eq:B-equation}), and write
\begin{multline*}
  \div\left[\sum_{k=1}^d \partial_k u^*(x) M_k\left(\frac x \varepsilon\right)  \right] = \sum_{i=1}^d \sum_{j=1}^d
  \sum_{k=1}^d \partial_i B^{ij}_k\left(\frac x \varepsilon\right) \partial_j\partial_k u^*(x) \\=
  \sum_{i=1}^d \partial_i\left(\sum_{j=1}^d\sum_{k=1}^d\varepsilon B^{ij}_k\left(\frac x
      \varepsilon\right)\partial_j\partial_k u^*(x) \right) - \sum_{i=1}^d \sum_{j=1}^d
  \sum_{k=1}^d B^{ij}_k\left(\frac x \varepsilon\right) \partial_i \partial_j\partial_k u^*(x).
\end{multline*}
The right-most term vanishes because, for each $k$, $B_k$ is skew-symmetric and $D^2 \left(\partial_k u^*\right)$ is symmetric.
\end{proof} 

Considering \eqref{eq:equation-reste}-\eqref{eq:H-epsilon}, a natural strategy to prove bounds on $R^\varepsilon$
is the following: first prove bounds on $H^\varepsilon$, then prove elliptic regularity estimates for the operator
$-\div(a(x/\varepsilon)\nabla\cdot)$ that are uniform with respect to $\varepsilon$.

The following two Lemmas achieve the first step of this strategy, establishing bounds on $H^\varepsilon$.

\begin{lemme}\label{lm:pr2.3}
  Assume \ref{H1} through \ref{H4}. Then, the correctors defined by
  Assumption~\ref{H3} satisfy
  \begin{equation}
    \label{eq:regularite_holder_w}
    \forall p\in\RR^d,\quad \nabla w_p \in C^{0,\alpha}_{\rm unif}(\RR^d).
  \end{equation}
If in addition Assumption~\ref{H8} holds, the potential $B$ defined by \eqref{eq:B-equation-2} satisfies
\begin{equation}
  \label{eq:regularite_hoder_B}
  \nabla B  \in C^{0,\alpha}_{\rm unif}(\RR^d).
\end{equation}
\end{lemme}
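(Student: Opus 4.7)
The plan is to apply interior Schauder estimates to each of the two equations, once we have converted the hypotheses into uniform local bounds.

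\medskip

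\textbf{For $\nabla w_p$.} Set $v(x) = w_p(x) + p\cdot x$, so that Assumption~\ref{H3} reads $-\div(a\,\nabla v)=0$ in $\RR^d$. Assumptions~\ref{H1} and~\ref{H2} supply uniform ellipticity and a uniform H\"older bound on $a$, so the classical interior Schauder estimate for divergence-form equations gives a constant $C$, depending only on $d$, $\mu$, $\alpha$, and $\|a\|_{C^{0,\alpha}_{\rm unif}}$, such that
\begin{displaymath}
\|\nabla v\|_{C^{0,\alpha}(B(x_0,1))} \leq C\,\|\nabla v\|_{L^2(B(x_0,2))}
\end{displaymath}
for every $x_0\in\RR^d$ (the right-hand side involves only $\nabla v$ since the equation is invariant under adding constants to $v$, so one first subtracts the mean of $v$ on $B(x_0,2)$ and applies Poincar\'e). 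Assumption~\ref{H4} bounds $\|\nabla w_p\|_{L^2(B(x_0,2))}$ uniformly in $x_0$, so $\|\nabla v\|_{L^2(B(x_0,2))}\leq C(1+|p|)$, and we conclude $\nabla v\in C^{0,\alpha}_{\rm unif}(\RR^d)$, hence $\nabla w_p=\nabla v-p\in C^{0,\alpha}_{\rm unif}(\RR^d)$.

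\medskip

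\textbf{For $\nabla B$.} Thanks to the first part and to Assumption~\ref{H2}, the field $M_k$ defined by \eqref{eq:Mik} lies in $C^{0,\alpha}_{\rm unif}(\RR^d)$. Rewriting the right-hand side of \eqref{eq:B-equation-2} as $\partial_j M_k^i-\partial_i M_k^j=\sum_l \partial_l G_l^{ij,k}$ with $G_l^{ij,k}=\delta_{lj}M_k^i-\delta_{li}M_k^j\in C^{0,\alpha}_{\rm unif}$, the equation for $B_k^{ij}$ reads
\begin{displaymath}
-\Delta B_k^{ij}=\div G^{ij,k},\qquad G^{ij,k}\in C^{0,\alpha}_{\rm unif}(\RR^d).
\end{displaymath}
The interior Schauder estimate for the Laplacian then yields
\begin{displaymath}
\|\nabla B_k^{ij}\|_{C^{0,\alpha}(B(x_0,1/2))}\leq C\bigl(\|\nabla B_k^{ij}\|_{L^2(B(x_0,1))}+\|G^{ij,k}\|_{C^{0,\alpha}(B(x_0,1))}\bigr),
\end{displaymath}
and it remains to control the first term on the right uniformly in $x_0$.

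\medskip

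\textbf{Main obstacle: a uniform $L^2$ bound on $\nabla B$.} This is where Assumption~\ref{H8} enters and is the only genuinely non-routine step. The hypothesis controls $|B(x)-B(y)|$ only for $|x-y|\geq 1$, whereas a Caccioppoli argument needs oscillation control of $B$ on a unit ball. For any $x_1,x_2\in B(x_0,1)$ one has $|x_1-x_2|\leq 2$, and since $d\geq 3$ one can select an auxiliary point $z\in\RR^d$ with $|z-x_1|\geq 1$ and $|z-x_2|\geq 1$ while $|z-x_1|,|z-x_2|\leq 3$; applying \ref{H8} twice and the triangle inequality yields $|B(x_1)-B(x_2)|\leq 2C\cdot 3^{1-\nu}$. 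Hence the oscillation of $B$ on $B(x_0,1)$ is bounded by a constant independent of $x_0$, and subtracting the mean $\bar B$ of $B$ on $B(x_0,1)$ we obtain $\|B-\bar B\|_{L^\infty(B(x_0,1))}\leq C'$. The Caccioppoli inequality applied to the equation above then gives $\|\nabla B\|_{L^2(B(x_0,1/2))}\leq C''$ uniformly in $x_0$. Feeding this into the Schauder estimate yields \eqref{eq:regularite_hoder_B}.
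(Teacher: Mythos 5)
Your proof is correct and follows essentially the same route as the paper, which simply invokes \cite[Theorem 8.32]{GT} for both equations. What you have filled in---and this is the genuinely non-routine content the paper leaves implicit---is the uniform local bound that makes the Schauder citation legitimate: for $w_p$ you chain De Giorgi--Nash, Poincar\'e--Wirtinger and Assumption~\ref{H4}; for $B$ you correctly notice that Assumption~\ref{H8} only controls $|B(x)-B(y)|$ when $|x-y|\geq 1$, and you bridge the short-distance gap with the auxiliary-point triangle inequality (which in fact needs only $d\geq 2$, not $d\geq 3$). This is exactly where \ref{H8} enters, and your write-up makes that visible. One small simplification: the Caccioppoli step before the Schauder estimate for $B$ is unnecessary; once $\|B-\bar B\|_{L^\infty(B(x_0,1))}\leq C$ is established, the interior Schauder estimate with the $C^0$-norm of $B-\bar B$ on the right-hand side already closes the argument.
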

\begin{proof}
  Estimate (\ref{eq:regularite_holder_w}) is a direct consequence of elliptic estimates \cite[Theorem
  8.32]{GT}. Similarly, \eqref{eq:B-equation-2} reads $-\Delta B^{ij}_k = \partial_j M^i_k
  - \partial_i M^j_k = \div(M^i_k e_i - M^j_k e_j),$ where $M^j_k$ is defined by~(\ref{eq:Mik}). Using \eqref{eq:regularite_holder_w}, $M^j_k\in
  C^{0,\alpha}_{\rm unif}(\RR^d)$. Thus, applying \cite[Theorem
  8.32]{GT} again, we have~(\ref{eq:regularite_hoder_B}).
\end{proof}
\begin{lemme}\label{lm2.7}
  Assume \ref{H1}-\ref{H2}-\ref{H3}, and \ref{H7}-\ref{H8} for some $\nu>0$, and let $H^\varepsilon$ be
  defined by (\ref{eq:H-epsilon}). Then, for any $R>0$ and any $q\in[1,+\infty]$, if $D^2u^*\in
  L^q(\Omega)$, we have
  \begin{equation}
    \label{eq:estimation-H-L^p}
    \left\|H^\varepsilon\right\|_{L^q(\Omega\cap B(0,R))} \leq C \varepsilon^\nu R^{1-\nu}\left\| D^2
      u^*\right\|_{L^q(\Omega\cap B(0,R))},
  \end{equation}
where the constant $C$ does not depend on $D^2 u^*,R, \varepsilon$.

Moreover, $\alpha$ being defined by Assumption~\ref{H2}, for any $\beta\in[0,\alpha]$, if $u^*\in
C^{2,\beta}(\Omega)$, we have
\begin{equation}
  \label{eq:estimation-H-Holder}
  \left[ H^\varepsilon \right]_{C^{0,\beta}(\Omega\cap B(0,R))} \leq C \varepsilon^\nu R^{1-\nu}\left[D^2
    u^*\right]_{C^{0,\beta}(\Omega\cap B(0,R))} + C \varepsilon^{\nu-\beta} R^{1-\nu}\left\|D^2 u^*\right\|_{L^\infty(\Omega\cap B(0,R))},
\end{equation}
where $C$ does not depend on $D^2 u^*,R, \varepsilon$.
\end{lemme}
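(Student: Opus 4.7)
My plan is to exploit the product structure $H^\varepsilon = F^\varepsilon : D^2 u^*$, where the oscillatory tensor $F^\varepsilon(x)$ gathers the factors $\varepsilon a_{ij}(x/\varepsilon) w_{e_k}(x/\varepsilon)$ and $\varepsilon B^{ij}_k(x/\varepsilon)$ appearing in \eqref{eq:H-epsilon}. It suffices to establish
\begin{equation*}
\|F^\varepsilon\|_{L^\infty(B(0,R))} \leq C\varepsilon^\nu R^{1-\nu} \quad \text{and} \quad [F^\varepsilon]_{C^{0,\beta}(B(0,R))} \leq C\varepsilon^{\nu-\beta} R^{1-\nu},
\end{equation*}
since \eqref{eq:estimation-H-L^p} then follows by H\"older's inequality and \eqref{eq:estimation-H-Holder} by the standard product rule $[fg]_{C^{0,\beta}} \leq \|f\|_\infty [g]_{C^{0,\beta}} + [f]_{C^{0,\beta}}\|g\|_\infty$, applied with $f = F^\varepsilon$ and $g = \partial_j \partial_k u^*$.

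For the $L^\infty$ bound, I first upgrade Assumptions~\ref{H7} and \ref{H8} to a global sublinear estimate on the correctors and on the potential. Normalizing $w_{e_k}(0) = 0$ and $B(0) = 0$ (admissible since both are determined only up to an additive constant), the two assumptions directly control the increments for $|y| \geq 1$, while for $|y| \leq 1$ the quantity $|w_{e_k}(y)|$ is bounded by $\|\nabla w_{e_k}\|_{L^\infty} |y|$, and similarly for $B$ --- both gradients being in $L^\infty$ thanks to Lemma~\ref{lm:pr2.3}. Putting the two regimes together yields $|w_{e_k}(y)| + |B(y)| \leq C(1+|y|^{1-\nu})$ on $\RR^d$. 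Rescaling $y = x/\varepsilon$ and using $\|a\|_\infty \leq 1/\mu$ from \ref{H1} gives the required $L^\infty$ bound on $F^\varepsilon$ over $B(0,R)$, the subleading term $O(\varepsilon)$ being absorbed into $\varepsilon^\nu R^{1-\nu}$.

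For the H\"older bound, I estimate the $C^{0,\beta}$ seminorm of each scalar factor of $F^\varepsilon$ by a two-scale dichotomy. On small scales $|x-y| < \varepsilon$, I use the uniform Lipschitz control of $w_{e_k}$ and $B$ (inherited from $\nabla w_{e_k},\nabla B \in C^{0,\alpha}_{\rm unif}(\RR^d)$ via Lemma~\ref{lm:pr2.3}) and the $C^{0,\alpha}$ regularity of $a$, together with the hypothesis $\beta \leq \alpha$, to estimate each increment by a Lipschitz-type quantity. On large scales $|x-y| \geq \varepsilon$, I invoke Assumptions~\ref{H7} and \ref{H8} directly to bound the corrector and potential increments by $C\varepsilon^\nu |x-y|^{1-\nu}$, and use the $L^\infty$ bound from \ref{H1} for $a(\cdot/\varepsilon)$. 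Applying the product rule factor by factor then yields $[F^\varepsilon]_{C^{0,\beta}(B(0,R))} \leq C\varepsilon^{\nu-\beta} R^{1-\nu}$.

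The main technical point is the bookkeeping of exponents when matching the two regimes in the H\"older estimate: one must verify that the macroscopic contribution $\varepsilon^\nu |x-y|^{1-\nu-\beta}$ (obtained after dividing by $|x-y|^\beta$) and the microscopic contribution of order $\varepsilon^{1-\beta}$ both fit under the single envelope $C\varepsilon^{\nu-\beta} R^{1-\nu}$. This entails distinguishing the cases $1-\nu-\beta \geq 0$ and $1-\nu-\beta < 0$ and using the implicit assumption $\varepsilon \leq R$ so that $R^{1-\nu}$ dominates the various remainders.
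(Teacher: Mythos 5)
Your proof is essentially the paper's: both write $H^\varepsilon$ as a product of an oscillatory tensor with $D^2u^*$, bound the tensor in $L^\infty$ by combining Assumptions~\ref{H7}--\ref{H8} (for arguments $\geq 1$) with the Lipschitz bound from Lemma~\ref{lm:pr2.3} (for arguments $\leq 1$), and handle the H\"older seminorm via the product rule together with a two-scale split at $|x-y|=\varepsilon$. One small refinement: the paper obtains the clean global bound $|w_{e_k}(y)|\leq C|y|^{1-\nu}$ (rather than your $C(1+|y|^{1-\nu})$) by observing that $|y|\leq |y|^{1-\nu}$ when $|y|\leq 1$, which avoids the $O(\varepsilon)$ remainder and hence the implicit assumption $\varepsilon\leq R$ you invoke to absorb it.
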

We recall here that the H\"older semi-norm $[\cdot]_{C^{0,\beta}(\Omega')}$ is defined by
\begin{equation}\label{eq:holder_semi_norm}
   \left[ v \right]_{C^{0,\beta}(\Omega')} = \sup_{x\neq y\in \Omega'} \frac{|v(x)-v(y)|}{|x-y|^\beta}.
\end{equation}

\begin{remarque}
  Lemma~\ref{lm2.7} is proved under Assumptions \ref{H1}, \ref{H2}, \ref{H3}, \ref{H7}, \ref{H8} only. However, applying
  Lemma~\ref{lm:rq2}, this in fact implies that Assumptions \ref{H4}, \ref{H5}, \ref{H6} are satisfied.  
\end{remarque}

\begin{proof}
  First, it is clear that
  \begin{multline}\label{eq:5}
    \left\|H^\varepsilon\right\|_{L^q(\Omega\cap B(0,R))} \leq \varepsilon\left(\|a\|_{L^\infty\left(\RR^d\right)}
      \sum_{k=1}^d\left\|w_{e_k} \left(\frac \cdot \varepsilon\right)\right\|_{L^\infty\left(\Omega\cap
          B(0,R)\right)} + \left\|B \left(\frac \cdot \varepsilon\right)\right\|_{L^\infty\left(\Omega\cap
          B(0,R)\right)}\right)\\ \times \left\| D^2 u^*\right\|_{L^q(\Omega\cap B(0,R))}.
  \end{multline}
Note that, $w_{e_k}$ and $B$ being defined up to the addition of a constant, we can always assume that $w_{e_k}(0) = 0$
and $B(0) = 0$. Hence, if $|x|/\varepsilon>1$, Assumptions~\ref{H7} and \ref{H8} imply
\begin{equation}
  \label{eq:92}
  \left|w_{e_k}\left(\frac x \varepsilon\right)\right|\leq C\left(\frac{|x|}\varepsilon\right)^{1-\nu}, \qquad
  \left| B \left(\frac x \varepsilon\right)\right|\leq C\left(\frac{|x|}\varepsilon\right)^{1-\nu}.
\end{equation}
If $|x|/\varepsilon\leq 1$, we use Lemma~\ref{lm:pr2.3}, which implies that $\nabla w_{e_k}\in L^\infty(\RR^d)$ and
$\nabla B\in L^\infty(\RR^d)$, whence
\begin{equation}
  \label{eq:93}
  \left|w_{e_k}\left(\frac x \varepsilon\right)\right|\leq C\frac{|x|}\varepsilon\leq C\left(\frac{|x|}\varepsilon\right)^{1-\nu}, \qquad
  \left| B \left(\frac x \varepsilon\right)\right|\leq C\frac{|x|}\varepsilon\leq C\left(\frac{|x|}\varepsilon\right)^{1-\nu}.
\end{equation}
Inserting \eqref{eq:92}-\eqref{eq:93} into \eqref{eq:5}, we find (\ref{eq:estimation-H-L^p}).

Next, we prove (\ref{eq:estimation-H-Holder}), writing
\begin{multline}\label{eq:7}
  \left[ H^\varepsilon \right]_{C^{0,\beta}(\Omega\cap B(0,R))} \leq \varepsilon \|a\|_{L^\infty\left(\RR^d\right)}
      \sum_{k=1}^d\left\|w_{e_k} \left(\frac \cdot
          \varepsilon\right)\right\|_{L^\infty\left(\Omega\cap B(0,R)\right)}\left[D^2 u^*\right]_{C^{0,\beta}(\Omega\cap B(0,R))} \\+
      \varepsilon  \|a\|_{L^\infty\left(\RR^d\right)}\sum_{k=1}^d\left[w_{e_k} \left(\frac \cdot
          \varepsilon\right)\right]_{C^{0,\beta}\left(\Omega\cap B(0,R)\right)}\left\|D^2 u^*\right\|_{L^\infty(\Omega\cap B(0,R))}
\\+ \varepsilon \left[a\left(\frac \cdot \varepsilon\right)\right]_{C^{0,\beta}(\Omega\cap B(0,R))}
      \sum_{k=1}^d\left\|w_{e_k} \left(\frac \cdot \varepsilon\right)\right\|_{L^\infty\left(\Omega\cap B(0,R)\right)}
      \left\|D^2 u^*\right\|_{L^\infty(\Omega\cap B(0,R))} \\
 +\varepsilon \left\|B \left(\frac \cdot \varepsilon\right)\right\|_{L^\infty\left(\Omega\cap B(0,R)\right)}\left[D^2
   u^*\right]_{C^{0,\beta}(\Omega\cap B(0,R))} 
+\varepsilon \left[B \left(\frac \cdot \varepsilon\right)\right]_{C^{0,\beta}(\Omega\cap B(0,R))}\left\|D^2 u^*\right\|_{L^\infty\left(\Omega\cap B(0,R)\right)} 
\end{multline}
Here again, we use \eqref{eq:92}-\eqref{eq:93}, which imply
\begin{equation}
  \label{eq:6}
   \sum_{k=1}^d\left\|w_{e_k} \left(\frac \cdot
          \varepsilon\right)\right\|_{L^\infty\left(\Omega\cap B(0,R)\right)} + \left\|B \left(\frac \cdot
          \varepsilon\right)\right\|_{L^\infty\left(\Omega\cap B(0,R)\right)} \leq C \varepsilon^{\nu-1} R^{1-\nu}.
\end{equation}
Using Assumption~\ref{H2}, we also have, since $\beta\leq \alpha$,
\begin{equation}
  \label{eq:8}
\left[a\left(\frac \cdot \varepsilon\right)\right]_{C^{0,\beta}(\Omega\cap B(0,R))} \leq C\varepsilon^{-\beta}.
\end{equation}
Using (\ref{eq:regularite_holder_w}), we have, for $|x-y|<\varepsilon$,
\begin{multline*}
  \frac{\left|\varepsilon w_{e_k}\left(\frac x \varepsilon\right) -\varepsilon w_{e_k}\left(\frac y
        \varepsilon\right) \right|}{|x-y|^\beta} \leq C \left\|\nabla w_{e_k}\right\|_{L^\infty(\RR^d)}
  |x-y|^{1-\beta} \\= C \left\|\nabla w_{e_k}\right\|_{L^\infty(\RR^d)}
  |x-y|^{\nu-\beta}|x-y|^{1-\nu} \leq C\left\|\nabla w_{e_k}\right\|_{L^\infty(\RR^d)} \varepsilon^{\nu-\beta}R^{1-\nu}.
\end{multline*}
If $|x-y|>\varepsilon$, we use Assumption~\ref{H7}, which implies
\begin{displaymath}
  \frac{\left|\varepsilon w_{e_k}\left(\frac x \varepsilon\right) -\varepsilon w_{e_k}\left(\frac y
        \varepsilon\right) \right|}{|x-y|^\beta} \leq C \varepsilon^\nu |x-y|^{1-\beta-\nu} = C \varepsilon^\nu
  |x-y|^{-\beta} |x-y|^{1-\nu} \leq C \varepsilon^{\nu-\beta}R^{1-\nu}.
\end{displaymath}
Collecting the above estimates, we obtain
\begin{equation}
  \label{eq:9}
  \left[\varepsilon w_{e_k}\left(\frac x \varepsilon\right) \right]_{C^{0,\beta}(\Omega\cap B(0,R))} \leq C
  \varepsilon^{\nu-\beta} R^{1-\nu}.
\end{equation}
A similar argument allows to prove that 
\begin{equation}
  \label{eq:10}
  \left[\varepsilon B\left(\frac x \varepsilon\right) \right]_{C^{0,\beta}(\Omega\cap B(0,R))} \leq C \varepsilon^{\nu-\beta} R^{1-\nu}.
\end{equation}
Hence, inserting (\ref{eq:6}), (\ref{eq:8}), (\ref{eq:9}), (\ref{eq:10}) into (\ref{eq:7}), we find (\ref{eq:estimation-H-Holder}).
\end{proof}

Next, we are going to prove elliptic regularity estimates for
the operator $-\div(a(x/\varepsilon)\nabla \cdot)$ that are uniform in $\varepsilon$. This will in turn allow to
prove estimates on $R^\varepsilon$ using \eqref{eq:equation-reste}.

\section{Estimates in the homogeneous case}
\label{sec:homogeneous-case}

Our aim is now to prove, as a first step, that, if the coefficient $a$ satisfies \ref{H1} through \ref{H6}, then a solution $v^\varepsilon$ to 
\begin{equation}\label{eq:96}
  -\div \left(a\left(\frac x \varepsilon +y\right) \nabla v^\varepsilon\right) = 0,
\end{equation}
satisfies Lipschitz bounds \emph{uniformly} in $\varepsilon>0$ and $y\in \RR^d$. To this end, we apply the compactness method of
Avellaneda and Lin \cite{AvellanedaLin}. Loosely speaking, since as $\varepsilon$ vanishes, the equation
homogenizes into
\begin{displaymath}
  \div \left(a^* \nabla v^*\right) = 0,
\end{displaymath}
for which Lipschitz bounds hold, thus, for $\varepsilon$ sufficiently small, such bounds should survive. On the other
hand, for $\varepsilon$ "large", bounded away from zero, they also hold, uniformly, by standard elliptic regularity
results, thus, intuitively, the result.

\subsection{H\"older estimates}
The main result of this Section is a generalization of \cite[Lemma 24]{AvellanedaLin} to the present
setting:
\begin{theoreme}\label{th1}
  Assume that the matrix-valued coefficient $a$ satisfies \ref{H1} through
  \ref{H6}. Assume that $\Omega$ is a $C^{2,1}$ bounded domain, that $\beta\in ]0,1[$, $y\in\RR^d,$ and $g\in
  C^{0,\beta}\left(\overline B(0,1)\right).$ Assume that $v^\varepsilon$ is a solution to
  \begin{displaymath}
    \left\{
      \begin{aligned}
        &-\div\left(a\left(\frac x \varepsilon+y\right) \nabla v^\varepsilon\right) = 0 & \text{ in } \Omega\cap
        B(0,1), \\
        &v^\varepsilon = g & \text{ in } \left(\partial\Omega\right) \cap \overline B(0,1).
      \end{aligned}
\right.
  \end{displaymath}
Then there exists a constant depending only on $a$, $\beta$ and $\Omega$ such that
\begin{equation}
  \label{eq:estimation-holder-homogene}
  \|v^\varepsilon\|_{C^{0,\beta}(\Omega\cap B(0,1/2))} \leq C\left( \|g\|_{C^{0,\beta}(\left(\partial\Omega\right)\cap \overline B(0,1))}
    + \|v^\varepsilon\|_{L^2(\Omega\cap
        B(0,1))}\right)
\end{equation}
\end{theoreme}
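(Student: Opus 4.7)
I would follow the compactness method introduced by Avellaneda--Lin \cite{AvellanedaLin} and applied in \cite{KLSGreenNeumann}, with the periodicity of the coefficient replaced by the uniform H-convergence of Proposition~\ref{prop:uniform-H-cv}. The estimate is obtained by splitting the analysis of $v^\varepsilon$ according to scale: for scales $r \gtrsim \varepsilon$, a compactness/homogenization argument delivers H\"older control; for scales $r \lesssim \varepsilon$, Assumption~\ref{H2} allows a rescaling that freezes the $C^{0,\alpha}$ norm of the coefficient, so classical Schauder theory applies.

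The core step is a one-step oscillation-decay lemma: there exist $\theta \in (0, 1/4)$ and $\varepsilon_0 > 0$, depending only on $a$, $\Omega$, $\beta$, such that for every $\varepsilon \in (0, \varepsilon_0)$ and every $y \in \RR^d$, any solution $v^\varepsilon$ of the boundary value problem satisfies a self-similar inequality of the schematic form
\[
  \sup_{\Omega\cap B(0,\theta)} |v^\varepsilon - g(0)| \;\leq\; \theta^\beta \Big( \|g\|_{C^{0,\beta}((\partial\Omega)\cap\overline B(0,1))} + \|v^\varepsilon\|_{L^2(\Omega\cap B(0,1))} \Big).
\]
I would prove this by contradiction: suppose the bound fails along sequences $\varepsilon_n \to 0$, $y_n \in \RR^d$, and normalized data $(v_n, g_n)$. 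De Giorgi--Nash--Moser interior H\"older estimates, together with standard boundary barrier arguments valid under Assumption~\ref{H2} and the $C^{2,1}$ regularity of $\partial\Omega$, yield uniform equicontinuity of $(v_n)$ on $\overline{\Omega\cap B(0,1)}$, so a subsequence converges uniformly to some $v^*$. Proposition~\ref{prop:uniform-H-cv} then identifies the limit equation: $v^*$ solves $-\div(a^*\nabla v^*) = 0$ in $\Omega \cap B(0,1)$ with boundary data $g^* := \lim g_n$ on $(\partial\Omega) \cap \overline B(0,1)$. Since $a^*$ is constant and $\partial\Omega$ is $C^{2,1}$, classical Schauder and Poisson-kernel theory supply for $v^*$ the pointwise H\"older bound at $0$ that, for a sufficiently small choice of $\theta$, contradicts the opposite inequality passed to the limit along the sequence.

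With the one-step decay in hand, I would iterate by rescaling: $w(x) := \theta^{-\beta}(v^\varepsilon(\theta x)-g(0))$ satisfies an equation of the same type with parameter $\varepsilon/\theta$ on the unit ball, and its normalizing $L^2$ and $C^{0,\beta}$ data are again bounded by $1$. The decay may therefore be reapplied as long as $\varepsilon\theta^{-k} < \varepsilon_0$, which yields a H\"older bound at every dyadic scale $r \in [\varepsilon, 1]$. For $r < \varepsilon$, a rescaling by $\varepsilon$ freezes the $C^{0,\alpha}$ norm of the coefficient by Assumption~\ref{H2}, and classical Schauder theory completes the estimate at small scales. A translation/covering argument centered at an arbitrary point of $\Omega \cap B(0,1/2)$ then upgrades these pointwise bounds into the global estimate~\eqref{eq:estimation-holder-homogene}.

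The principal obstacle is the compactness lemma, and within it the uniformity of homogenization with respect to the translation parameter $y$: this is precisely what Proposition~\ref{prop:uniform-H-cv} encodes, and it is the mechanism by which Assumptions~\ref{H4}--\ref{H6} take the place of periodicity throughout the argument. Secondary technical care is needed to handle the boundary (a localization and a $C^{2,1}$ flattening reduce the analysis to a half-space, where constant-coefficient regularity with $C^{0,\beta}$ Dirichlet data is classical), and to track the preservation of the normalizing norms under each iteration; both are standard once the compactness step has been secured.
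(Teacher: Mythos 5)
Your proposal follows the same Avellaneda--Lin compactness scheme as the paper: a one-step oscillation-decay lemma proved by contradiction, with Proposition~\ref{prop:uniform-H-cv} supplying the uniform H-convergence that replaces periodicity, then an iteration/rescaling down to scale $\varepsilon$ where classical Schauder theory takes over. The paper organizes the decay lemmas slightly differently (an interior lemma, Lemma~\ref{lm7}/\ref{lm8}, and a separate boundary lemma with $g=0$, Lemma~\ref{lm22}/\ref{lm23}, then deferring the general $g$ reduction to \cite[Lemma 24]{AvellanedaLin}) and works with $L^2$-averaged oscillations via Caccioppoli rather than sup-norm decay via De Giorgi--Nash--Moser, but these are minor technical variants of the same argument and your proposal is sound.
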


In order to prove Theorem~\ref{th1}, we first assume that $B(0,1)\subset \Omega$. In such a case,
\eqref{eq:estimation-holder-homogene} becomes an interior estimate. Its proof is the matter of Lemma~\ref{lm7} and
Lemma~\ref{lm8} below. In a second step, we allow for $B(0,1)$ to intersect $\partial \Omega$ and prove the same type
of estimates (Lemma~\ref{lm22} and \ref{lm23} below).

\bigskip

We first prove a result that generalizes \cite[Lemma 7]{AvellanedaLin} (with $f=0$ there)
to the present setting. 
\begin{lemme}\label{lm7}
    Assume \ref{H1} through
  \ref{H6}, and let $\beta\in ]0,1[$. There exists $\theta \in
  ]0,1/4[$ depending only on $\mu$ (see Assumption~\ref{H1}) and $\beta$, there exists a $\varepsilon_0>0$
  depending only on $a$, $\beta$ and $\theta$, such that, $\forall y\in \RR^d$, if $v^\varepsilon$ is a solution to 
  \begin{equation}
    \label{eq:14}
        -\div\left(a\left(\frac x \varepsilon+y\right) \nabla v^\varepsilon\right) = 0  \text{ in } B(0,1),
  \end{equation}
then
\begin{equation}
  \label{eq:initialisation}
  \fint_{B(0,\theta)} \left|v^\varepsilon -\fint_{B(0,\theta)}v^\varepsilon \right|^2 dx \leq \theta^{2\beta}
  \fint_{B(0,1)} \left|v^\varepsilon(x)\right|^2 dx.
\end{equation}
\end{lemme}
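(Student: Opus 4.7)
The plan is to follow the classical Avellaneda--Lin compactness scheme \cite{AvellanedaLin}, with Assumption~\ref{H6} (equivalently, the uniform H-convergence of Proposition~\ref{prop:uniform-H-cv}) replacing the periodicity argument used in \cite{AvellanedaLin}. The statement is a ``one-step improvement'' estimate: heuristically, on small scales the equation is close to the constant-coefficient equation $-\operatorname{div}(a^*\nabla v^*)=0$, for which Lipschitz (hence $C^{0,1}$) regularity gives a gain of a full power of $\theta$ in the Campanato-type excess; since $\beta<1$, for $\theta$ small enough we obtain the desired $\theta^{2\beta}$-improvement at the homogenized level, and the compactness argument transfers this improvement to the $\varepsilon$-problem for $\varepsilon$ sufficiently small.

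First I would handle the limit problem. If $v^*\in H^1(B(0,1))$ satisfies $-\operatorname{div}(a^*\nabla v^*)=0$ in $B(0,1)$, standard interior regularity for constant coefficient elliptic equations yields $v^*\in C^\infty(B(0,1/2))$, together with the Caccioppoli-type bound
\begin{equation*}
\sup_{B(0,1/4)}|\nabla v^*|^2\leq C_0\fint_{B(0,1)}|v^*|^2\,dx,
\end{equation*}
with $C_0$ depending only on $\mu$. A first-order Taylor expansion of $v^*$ around $0$ then gives, for every $\theta\in(0,1/4)$,
\begin{equation*}
\fint_{B(0,\theta)}\Bigl|v^*-\fint_{B(0,\theta)}v^*\Bigr|^2 dx\leq C_1\theta^2\sup_{B(0,1/4)}|\nabla v^*|^2\leq C_2\theta^2\fint_{B(0,1)}|v^*|^2 dx.
\end{equation*}
Since $\beta<1$, I fix once and for all $\theta\in(0,1/4)$ so small that $C_2\theta^{2-2\beta}\leq 1/2$; this $\theta$ depends only on $\mu$ and $\beta$, as required.

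Next I set up the contradiction argument to produce $\varepsilon_0$. Suppose the conclusion fails. Then there exist sequences $\varepsilon_n\to 0$, $y_n\in\RR^d$ and $v_n\in H^1(B(0,1))$ solving \eqref{eq:14} with coefficient $a(\cdot/\varepsilon_n+y_n)$ and such that, after normalizing, $\fint_{B(0,1)}|v_n|^2 dx=1$ while
\begin{equation*}
\fint_{B(0,\theta)}\Bigl|v_n-\fint_{B(0,\theta)}v_n\Bigr|^2 dx>\theta^{2\beta}.
\end{equation*}
The Caccioppoli inequality applied to $v_n$ bounds $\nabla v_n$ in $L^2(B(0,3/4))$ uniformly in $n$, so up to a subsequence $v_n\rightharpoonup v^*$ weakly in $H^1(B(0,3/4))$ and strongly in $L^2(B(0,3/4))$ by Rellich. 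By Proposition~\ref{prop:uniform-H-cv}, the coefficients $a(\cdot/\varepsilon_n+y_n)$ H-converge to $a^*$ on $B(0,3/4)$, with uniformity precisely in the translation $y_n$; therefore $v^*$ satisfies $-\operatorname{div}(a^*\nabla v^*)=0$ in $B(0,3/4)$ and $\fint_{B(0,1)}|v^*|^2\leq 1$.

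Finally I would pass to the limit. Strong $L^2$ convergence on $B(0,\theta)\Subset B(0,3/4)$ gives
\begin{equation*}
\fint_{B(0,\theta)}\Bigl|v^*-\fint_{B(0,\theta)}v^*\Bigr|^2 dx\geq \theta^{2\beta},
\end{equation*}
which contradicts the first step since $C_2\theta^2\leq \tfrac12\theta^{2\beta}<\theta^{2\beta}$. The existence of $\varepsilon_0=\varepsilon_0(a,\beta,\theta)$ follows.

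The one point requiring genuine care is the passage to the limit in the equation: one needs the limit $v^*$ to solve an equation with the same fixed matrix $a^*$, regardless of the choice of translations $y_n$. This is exactly what distinguishes the present abstract framework from a naive extension of \cite{AvellanedaLin}, and is why Assumption~\ref{H6} is formulated uniformly over sequences $(y_n)$; without this uniformity, different subsequences could produce different effective matrices and the compactness step would collapse. Everything else (Caccioppoli, Rellich, regularity for $a^*$) is standard and independent of $\varepsilon$ and $y$.
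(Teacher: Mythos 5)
Your proof is correct and follows exactly the paper's Avellaneda--Lin compactness strategy: a Lipschitz-type excess decay for the constant-coefficient limit problem, a choice of $\theta$ using $\beta<1$, and a contradiction argument in which Caccioppoli, Rellich, and the uniform $H$-convergence of Proposition~\ref{prop:uniform-H-cv} force the limit to satisfy the homogenized equation. One small write-up mismatch to fix: you prove the excess-decay estimate in Step~1 for solutions of $-\div(a^*\nabla v^*)=0$ on all of $B(0,1)$, but the limit $v^*$ produced by the compactness step only solves that equation on $B(0,3/4)$; you should state and prove the Step~1 estimate for solutions on a smaller ball (say $B(0,1/2)$, as the paper does) and then pass to the average over $B(0,1)$ via the trivial inclusion $\fint_{B(0,1/2)}|\cdot|^2\leq 2^d\fint_{B(0,1)}|\cdot|^2$, which only changes the constant $C_2$ by a fixed factor.
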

\begin{proof} We reproduce the proof of \cite[Lemma 7]{AvellanedaLin}, and
  use, instead of periodicity, uniform H-convergence.
  Consider $v^*$ a solution to $-\div(a^*\nabla v^*) = 0$ in $B(0,1/2)$. The matrix $a^*$ being constant, we have
  \begin{multline*}
    \fint_{B(0,\theta)} \left|v^* -\fint_{B(0,\theta)}v^* \right|^2 dx \leq \theta^2 \left\|\nabla
      v^*\right\|_{L^\infty(B(0,\theta))}^2 \leq
\theta^{2} \left\|\nabla v^*\right\|_{L^\infty(B(0,1/4))}^2 \\ \leq
  C\theta^2\fint_{B(0,1/2)} \left|v^*(x)\right|^2 dx.
  \end{multline*}
The right-most inequality is a consequence of elliptic regularity results. It may be proved by successively
applying \cite[Theorem 8.32]{GT}, and \cite[Theorem 8.24]{GT}. Hence, for $\theta$ sufficiently small,
  \begin{equation}\label{eq:94}
    \fint_{B(0,\theta)} \left|v^* -\fint_{B(0,\theta)}v^* \right|^2 dx \leq \frac{\theta^{2\beta}}{2^{d+1}}
    \fint_{B(0,1/2)} \left|v^*(x)\right|^2 dx 
  \end{equation}
We then fix such a $\theta$ and argue by contradiction to prove that $v^\varepsilon$ satisfies
\begin{equation}
  \label{eq:initialisation_bis}
  \fint_{B(0,\theta)} \left|v^\varepsilon -\fint_{B(0,\theta)}v^\varepsilon \right|^2 dx \leq \frac{\theta^{2\beta}}{2^d}
  \fint_{B(0,1/2)} \left|v^\varepsilon(x)\right|^2 dx.
\end{equation}
If it does not hold, then we can build sequences
$\varepsilon_n\to 0$ and $y_n\in \RR^d$ such that
\begin{equation}
  \label{eq:non_initialisation}
  \fint_{B(0,\theta)} \left|v^{\varepsilon_n} -\fint_{B(0,\theta)}v^{\varepsilon_n} \right|^2 dx > \frac{\theta^{2\beta}}{2^{d}}
  \fint_{B(0,1/2)} \left|v^{\varepsilon_n}(x)\right|^2 dx,
\end{equation}
where $v^{\varepsilon_n}$ solves \eqref{eq:14} (with $\varepsilon = \varepsilon_n$ and $y=y_n$). Normalizing $v^{\varepsilon_n}$ if necessary, we may assume that
$\displaystyle \fint_{B(0,1)} \left|v^{\varepsilon_n}\right|^2 = 1.$
Applying the Caccioppoli inequality \cite[page 76]{Giaquinta}, the sequence $\left(v^{\varepsilon_n}\right)_{n\in\NN}$ is bounded in
$H^1(B(0,1/2))$. Hence we can extract a subsequence converging strongly in $L^2(B(0,1/2))$ and weakly in
$H^1(B(0,1/2))$, to some limit $v^*\in H^1(B(0,1/2))$. Applying Proposition~\ref{prop:uniform-H-cv} (this where we
use assumptions \ref{H1} through \ref{H6}), we see that $v^*$ is a solution to
$-\div(a^*\nabla v^*) = 0$ in $B(0,1/2)$. Hence it satisfies \eqref{eq:94}. On the other hand, strong convergence
in $L^2(B(0,1/2)$ allows to pass to the limit in \eqref{eq:non_initialisation}, reaching a contradiction. We have
proved \eqref{eq:initialisation_bis}, which clearly implies \eqref{eq:initialisation}.
\end{proof}

Exactly as in \cite[Lemma 8]{AvellanedaLin} (with $f=0$ there), a proof by induction (which we therefore do not include here) from
Lemma~\ref{lm7} allows to prove the following
\begin{lemme}
  \label{lm8}
  Under the assumptions of Lemma~\ref{lm7}, let $\theta\in ]0,1/4[$ and $\varepsilon_0$ be given by Lemma~\ref{lm7}.
  If $\varepsilon\in ]0,\theta^k\varepsilon_0[,$ and if $v^\varepsilon$ satisfies (\ref{eq:14}), then 
\begin{displaymath}
  \fint_{B(0,\theta^k)} \left|v^\varepsilon -\fint_{B(0,\theta^k)}v^\varepsilon \right|^2 dx \leq \theta^{2k\beta}
  \fint_{B(0,1)} \left|v^\varepsilon(x)\right|^2 dx.
\end{displaymath}
\end{lemme}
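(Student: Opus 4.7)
The plan is to proceed by induction on $k \geq 0$. The base case $k=0$ reduces to the elementary inequality $\fint_{B(0,1)}|v^\varepsilon - \fint_{B(0,1)} v^\varepsilon|^2 \leq \fint_{B(0,1)}|v^\varepsilon|^2$ (variance bounded by second moment), which holds trivially under the hypothesis $\varepsilon < \theta^0 \varepsilon_0 = \varepsilon_0$.

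For the inductive step, assume the conclusion holds at scale $\theta^k$ for every solution of~\eqref{eq:14}, every translation $y \in \RR^d$, and every parameter strictly below $\theta^k \varepsilon_0$. Now fix $\varepsilon < \theta^{k+1}\varepsilon_0$ together with $y$ and a corresponding $v^\varepsilon$, and introduce
\[
U(x) := v^\varepsilon(\theta^k x) - \fint_{B(0,\theta^k)} v^\varepsilon, \qquad \tilde\varepsilon := \varepsilon/\theta^k.
\]
A direct chain-rule computation—using that the equation is in divergence form, with no lower-order terms—shows that $U$ solves $-\div(a(x/\tilde\varepsilon + y)\nabla U)=0$ in $B(0,1)$, with the same translation parameter $y$. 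Since $\tilde\varepsilon < \theta\varepsilon_0 < \varepsilon_0$, Lemma~\ref{lm7} applies to $U$ and yields
\[
\fint_{B(0,\theta)} \left|U - \fint_{B(0,\theta)} U\right|^2 dx \;\leq\; \theta^{2\beta}\fint_{B(0,1)} |U|^2 dx.
\]

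Undoing the rescaling, the left-hand side equals $\fint_{B(0,\theta^{k+1})}|v^\varepsilon - \fint_{B(0,\theta^{k+1})} v^\varepsilon|^2$, because the additive constant drops out of the variance; the right-hand side equals $\theta^{2\beta}\fint_{B(0,\theta^k)}|v^\varepsilon - \fint_{B(0,\theta^k)} v^\varepsilon|^2$. Since $\varepsilon < \theta^{k+1}\varepsilon_0 < \theta^k\varepsilon_0$, the inductive hypothesis bounds this latter average by $\theta^{2k\beta}\fint_{B(0,1)}|v^\varepsilon|^2$. Chaining the two estimates gives the desired bound at scale $\theta^{k+1}$ and closes the induction.

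The only point needing care is the scale invariance of the equation under $x\mapsto \theta^k x$: the assumptions \ref{H1}--\ref{H6} are properties of the fixed coefficient $a$, unaffected by the rescaling, and the translation $y$ remains fixed throughout so that the uniformity in $y$ built into Lemma~\ref{lm7} is exactly what the iteration consumes. Beyond this bookkeeping, the argument is a standard self-improvement iteration à la~\cite{AvellanedaLin}, which presumably explains why the paper omits the computation.
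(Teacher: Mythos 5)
Your induction is correct and is precisely the argument the paper invokes by reference to Avellaneda--Lin (Lemma~8 there, with $f=0$). The rescaling $U(x)=v^\varepsilon(\theta^k x)-\fint_{B(0,\theta^k)}v^\varepsilon$ preserves the divergence-form equation with parameter $\tilde\varepsilon=\varepsilon/\theta^k$ and the same translation $y$, Lemma~\ref{lm7} applies because $\tilde\varepsilon<\theta\varepsilon_0<\varepsilon_0$, and chaining the resulting one-step contraction with the inductive hypothesis at scale $\theta^k$ closes the argument.
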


Following the sketch of the proof of \cite[Lemma 10]{AvellanedaLin} (with $f=0$ and $g=0$ there), and using uniform H-convergence
where periodicity was used in \cite{AvellanedaLin}, we obtain 

\begin{lemme}  \label{lm22}
    Assume~\ref{H1} through
  \ref{H6} with $\beta\in ]0,1[$, and that $\Omega$ is a $C^{1,\alpha}$ bounded domain such that, say,
  $0\in\partial\Omega$. There exists $\theta \in ]0,1/4[$ and $\varepsilon_0>0$ depending only on $a$, $\beta$ and
  $\Omega$, such that, for any $\varepsilon<\varepsilon_0$, any $y\in\RR^d$, and any solution $v^\varepsilon$ of
  \begin{equation}
    \label{eq:16}\left\{
      \begin{aligned}
        &-\div\left(a\left(\frac x \varepsilon+y\right) \nabla v^\varepsilon\right) = 0 & \text{ in } \Omega\cap
        B(0,1), \\
        &v^\varepsilon = 0 & \text{ on } \left(\partial\Omega\right) \cap \overline B(0,1),
      \end{aligned}
\right.
  \end{equation}
we have
\begin{equation}
  \label{eq:17}
  \fint_{\Omega\cap B(0,\theta)} \left|v^\varepsilon(x)\right|^2 dx \leq\theta^{2\beta} \fint_{\Omega\cap B(0,1)}
  \left|v^\varepsilon(x)\right|^2 dx.
\end{equation}
\end{lemme}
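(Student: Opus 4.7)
The plan is to follow verbatim the compactness scheme used for Lemma~\ref{lm7}, adapted to handle the Dirichlet boundary condition, and following the sketch given in \cite[Lemma~10]{AvellanedaLin}. First I would analyze the homogenized limit problem: if $v^*$ solves $-\div(a^*\nabla v^*)=0$ in $\Omega\cap B(0,1/2)$ with $v^*=0$ on $(\partial\Omega)\cap B(0,1/2)$, then since $a^*$ is constant and $\Omega$ is of class $C^{1,\alpha}$, boundary Schauder estimates (flattening $\partial\Omega$ near $0$ and using, say, \cite[Theorem~8.33]{GT}) yield the Lipschitz bound
$$\|\nabla v^*\|_{L^\infty(\Omega\cap B(0,1/4))}\leq C\|v^*\|_{L^2(\Omega\cap B(0,1/2))},$$
and since $v^*(0)=0$, a pointwise estimate $|v^*(x)|\leq C|x|\,\|v^*\|_{L^2(\Omega\cap B(0,1/2))}$ follows on $\Omega\cap B(0,1/4)$. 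Together with the uniform lower bound $|\Omega\cap B(0,r)|\gtrsim r^d$ (a consequence of the $C^{1,\alpha}$ regularity of $\partial\Omega$), this yields, for $\theta>0$ small enough depending only on $a^*$, $\beta$ and $\Omega$,
$$\fint_{\Omega\cap B(0,\theta)}|v^*|^2\,dx \leq \frac{\theta^{2\beta}}{2^{d+1}}\fint_{\Omega\cap B(0,1/2)}|v^*|^2\,dx.$$
The improvement from $\theta^2$ down to $\theta^{2\beta}$ leaves plenty of room since $\beta<1$.

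Next, I would fix this $\theta$ and reason by contradiction exactly as in the proof of Lemma~\ref{lm7}. Assuming the conclusion fails yields sequences $\varepsilon_n\to0$, $y_n\in\RR^d$ and solutions $v^{\varepsilon_n}$ of (\ref{eq:16}) with $\varepsilon=\varepsilon_n$, $y=y_n$, such that after normalization $\fint_{\Omega\cap B(0,1)}|v^{\varepsilon_n}|^2\,dx=1$ and $\fint_{\Omega\cap B(0,\theta)}|v^{\varepsilon_n}|^2\,dx>\theta^{2\beta}$. The crucial compactness ingredient is a \emph{boundary} Caccioppoli inequality, which is available because $v^{\varepsilon_n}$ vanishes on $(\partial\Omega)\cap B(0,1)$: testing the equation against $v^{\varepsilon_n}\eta^2$ with $\eta$ a cutoff vanishing on $\partial B(0,1)$ produces a uniform bound on $\|\nabla v^{\varepsilon_n}\|_{L^2(\Omega\cap B(0,3/4))}$. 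Up to extraction, $v^{\varepsilon_n}$ then converges weakly in $H^1(\Omega\cap B(0,3/4))$ and strongly in $L^2(\Omega\cap B(0,3/4))$ to a limit $v^*$, whose trace on $(\partial\Omega)\cap B(0,3/4)$ vanishes by continuity of the trace operator. Proposition~\ref{prop:uniform-H-cv} (this is the only place where Assumptions~\ref{H3}--\ref{H6} enter) ensures that $-\div(a^*\nabla v^*)=0$ in $\Omega\cap B(0,3/4)$.

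To conclude, I would pass to the limit in the contradictory inequality to obtain $\fint_{\Omega\cap B(0,\theta)}|v^*|^2\,dx\geq\theta^{2\beta}$, whereas the normalization and strong $L^2$ convergence give $\fint_{\Omega\cap B(0,1/2)}|v^*|^2\,dx\leq C$; applying the estimate from the first step to this $v^*$ yields $\fint_{\Omega\cap B(0,\theta)}|v^*|^2\,dx\leq \theta^{2\beta}/2$, which is the desired contradiction.

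The step I expect to require the most care is the boundary Schauder estimate above, because it demands the $C^{1,\alpha}$ regularity of $\partial\Omega$ to be exploited cleanly (via a local straightening) together with the associated volume lower bound $|\Omega\cap B(0,r)|\gtrsim r^d$; everything else is a direct transcription of the interior argument of Lemma~\ref{lm7}, with uniform $H$-convergence in the sense of Definition~\ref{def:uniform-H-cv} substituting for the periodicity used in \cite{AvellanedaLin}.
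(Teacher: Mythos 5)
Your proposal is correct and follows essentially the same line as the paper's proof: establish a gain for solutions of the constant-coefficient problem vanishing at the boundary point (using boundary gradient estimates for the homogenized operator), then argue by compactness and contradiction, with Proposition~\ref{prop:uniform-H-cv} replacing periodicity to identify the limit equation. The only small deviations are cosmetic — the paper factors the boundary regularity step into \cite[Corollary 8.36]{GT} followed by \cite[Theorem 8.25]{GT}, ending with $\fint_{\Omega\cap B(0,1)}|v^*|^2$ rather than $\fint_{\Omega\cap B(0,1/2)}|v^*|^2$ on the right of the limit-problem estimate, which makes the final numerical comparison line up without appeal to a volume-ratio bound; your version works equally well provided you note (as you do) that $\theta$ can be shrunk to absorb the $\Omega$-dependent constant from the $B(0,1/2)$-to-$B(0,1)$ ratio.
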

\begin{proof} 
  Assume temporarily that $v^*$ is a solution to 
  \begin{equation}
    \label{eq:18}\left\{
      \begin{aligned}
        &-\div\left(a^* \nabla v^*\right) = 0 & \text{ in } \Omega\cap
        B(0,1), \\
        &v^* = 0 & \text{ in } \left(\partial\Omega\right) \cap \overline B(0,1).
      \end{aligned}
\right.
  \end{equation}
In particular, we have $v^*(0) = 0$, hence, for any $\theta \in ]0,1/4[$,
\begin{displaymath}
  \fint_{\Omega\cap B(0,\theta)} \left|v^*\right|^2 \leq C\theta^2 \left\|\nabla v^*\right\|^2_{L^\infty(\Omega\cap B(0,1/4))}.
\end{displaymath}
Applying the boundary gradient estimate \cite[Corollary 8.36]{GT}, we have
\begin{displaymath}
\left\|\nabla v^*\right\|_{L^\infty(\Omega\cap B(0,1/4))}\leq C
\|v^*\|_{L^\infty(\Omega\cap B(0,1/2))},
\end{displaymath}
hence
\begin{equation}\label{eq:19}
  \fint_{\Omega\cap B(0,\theta)} \left|v^*\right|^2 \leq C\theta^2 \left\|v^*\right\|^2_{L^\infty(\Omega\cap B(0,1/2))}
\end{equation}
We apply \cite[Theorem 8.25]{GT}. This gives $\displaystyle
\left\|v^*\right\|_{L^\infty(\Omega\cap B(0,1/2))} \leq C \fint_{\Omega\cap B(0,1)} \left|v^*\right|^2$. Hence,
inserting this estimate into \eqref{eq:19}, we find
\begin{displaymath}
   \fint_{\Omega\cap B(0,\theta)} \left|v^*\right|^2 \leq C \theta^2 \fint_{\Omega\cap B(0,1)} \left|v^*\right|^2
\end{displaymath}
Thus, for $\theta>0$ sufficiently small,
\begin{equation}
  \label{eq:20}
  \fint_{\Omega\cap B(0,\theta)} \left|v^*\right|^2 \leq \frac{\theta^{2\beta}}2 \fint_{\Omega \cap B(0,1)} \left|v^*\right|^2.
\end{equation}
We now fix $\theta>0$ to this value, and argue by contradiction: if \eqref{eq:17} does not hold, then one can find
a sequence $\varepsilon_n\to 0$ and a sequence $y_n$ such that,
for each $n$ the solution $v^{\varepsilon_n}$ of \eqref{eq:16} (with $\varepsilon=\varepsilon_n$, $y = y_n$) satisfies 
\begin{equation}\label{eq:95}
  \fint_{\Omega\cap B(0,\theta)} |v^{\varepsilon_n}|^2 > \theta^{2\beta} \fint_{\Omega\cap B(0,1)} |v^{\varepsilon_n}|^2.
\end{equation}
Multiplying $v^{\varepsilon_n}$ by a normalizing constant if necessary, we may assume that
\begin{equation}
  \label{eq:21}
  \fint_{\Omega\cap B(0,1)} \left|v^{\varepsilon_n}\right|^2 = 1.
\end{equation}
The sequence $(v^{\varepsilon_n})_{n\in\NN}$ is bounded in $H^1(\Omega \cap B(0,1/2))$ according to
Caccioppoli's inequality \cite[Proposition 2.1, p 76]{Giaquinta}. Hence, we can extract weak convergence in
$L^2(\Omega\cap B(0,1)\cap H^1(\Omega\cap B(0,1/2))$ and strong convergence in $L^2(\Omega\cap B(0,1/2))$. We Denote by $v^*$ its
limit. Inequality \eqref{eq:95} implies
\begin{displaymath}
 \theta^{2\beta} \fint_{\Omega\cap B(0,1)} |v^*|^2 \leq \theta^{2\beta}\liminf_{n\to+\infty} \fint_{\Omega\cap B(0,1)} |v^{\varepsilon_n}|^2 \leq
  \liminf_{n\to+\infty} \fint_{\Omega\cap B(0,\theta)} |v^{\varepsilon_n}|^2 = \fint_{\Omega\cap B(0,\theta)} |v^*|^2.
\end{displaymath}
In addition, Proposition~\ref{prop:uniform-H-cv}, which is valid since we assumed \ref{H1} through \ref{H6}, allows to prove
that $v^*$ is a solution to \eqref{eq:18}, hence satisfies \eqref{eq:20}. We therefore reach a contradiction,
concluding the proof. 
\end{proof}


Here again, using an induction argument as in the proof of \cite[Lemma 11]{AvellanedaLin} (with $f=0$ there), we have

\begin{lemme}
  \label{lm23}
  Under the same assumptions as those of Lemma~\ref{lm22}, with $\theta>0$ and $\varepsilon_0>0$ defined by the
  conclusion of Lemma~\ref{lm22}, we have, for any integer $k\geq 0$, if $\varepsilon<\theta^k \varepsilon_0$,
  \begin{displaymath}
    \fint_{\Omega\cap B(0,\theta^k)} |v^\varepsilon|^2 \leq \theta^{2k\beta} \fint_{\Omega\cap B(0,1)} |v^\varepsilon|^2.
  \end{displaymath}
\end{lemme}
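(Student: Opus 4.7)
The plan is a straightforward induction on $k\geq 0$, using Lemma~\ref{lm22} at each step after rescaling. The base case $k=0$ is trivial, since the estimate reduces to $\fint_{\Omega\cap B(0,1)}|v^\varepsilon|^2 \leq \fint_{\Omega\cap B(0,1)}|v^\varepsilon|^2$. For the inductive step, assuming the bound holds at level $k$ with $\varepsilon<\theta^k\varepsilon_0$, I would apply Lemma~\ref{lm22} to a rescaled version of $v^\varepsilon$ in order to replace $\theta^k$ by $\theta^{k+1}$ and gain an extra factor $\theta^{2\beta}$.

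Concretely, set $V(x) = v^\varepsilon(\theta^k x)$ for $x$ in the rescaled configuration $\Omega_k\cap B(0,1)$, where $\Omega_k := \theta^{-k}\Omega$. Then $V$ satisfies
\begin{displaymath}
-\div\left(a\left(\frac{x}{\varepsilon'} + y\right)\nabla V\right) = 0 \quad\text{in }\Omega_k \cap B(0,1), \qquad V = 0 \quad\text{on }\partial\Omega_k \cap \overline B(0,1),
\end{displaymath}
with $\varepsilon' := \varepsilon/\theta^k$. The hypothesis $\varepsilon<\theta^k\varepsilon_0$ exactly translates into $\varepsilon'<\varepsilon_0$, so Lemma~\ref{lm22} applies to $V$ on $\Omega_k$ (provided the constants can be chosen uniformly, see below), yielding
\begin{displaymath}
\fint_{\Omega_k \cap B(0,\theta)} |V|^2 \leq \theta^{2\beta}\fint_{\Omega_k \cap B(0,1)} |V|^2.
\end{displaymath}
Undoing the change of variables $x \mapsto \theta^k x$ gives
\begin{displaymath}
\fint_{\Omega\cap B(0,\theta^{k+1})} |v^\varepsilon|^2 \leq \theta^{2\beta}\fint_{\Omega\cap B(0,\theta^k)} |v^\varepsilon|^2,
\end{displaymath}
and combining this with the inductive hypothesis closes the induction.

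The key technical point, and the only place where one must be careful, is that $\theta$ and $\varepsilon_0$ in Lemma~\ref{lm22} a priori depend on the $C^{1,\alpha}$ character of the domain near the boundary point $0$. When we rescale $\Omega$ by the factor $\theta^{-k} > 1$, the boundary $\partial\Omega_k$ near the origin only becomes flatter (its $C^{1,\alpha}$ seminorm decreases), so the constants obtained by applying Lemma~\ref{lm22} to the rescaled domain are at least as good as those for $\Omega$ itself. Hence a single choice of $(\theta,\varepsilon_0)$ works uniformly in $k$, which is exactly what the induction requires and what makes the argument go through; this is the mild obstacle to keep track of in the write-up.
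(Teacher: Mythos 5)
Your proof is correct and is exactly the rescaling-plus-induction argument the paper has in mind when it points to Avellaneda--Lin's Lemma~11; the paper simply omits the details. Your observation about uniformity of $(\theta,\varepsilon_0)$ under the dilation $\theta^{-k}$ is the right thing to flag: for the induction to close, Lemma~\ref{lm22} must be understood (as in \cite{AvellanedaLin}) with constants depending only on $a$, $\beta$, and a bound on the local $C^{1,\alpha}$ graph norm of $\partial\Omega$ near $0$ rather than on the fixed domain $\Omega$ itself, and then your remark that the rescaled boundary function $\phi_k(x')=\theta^{-k}\phi(\theta^k x')$ has non-increasing $C^{1,\alpha}$ norm makes the single choice of $(\theta,\varepsilon_0)$ valid at every step.
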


\bigskip

The four above Lemmas allow us to proceed with the proof of Theorem~\ref{th1}. We first deal with the case of
interior estimate, that is, $\partial \Omega\cap \overline B(0,1)= \emptyset$, then we prove the general case.

\begin{proof}[Proof of Theorem~\ref{th1}]
  Assume first that $\partial \Omega\cap \overline B(0,1)= \emptyset$. Then the proof is exactly that of
  \cite[Lemma~9]{AvellanedaLin} with $f=0$, in which periodicity is not used. Next, if $\partial \Omega\cap \overline B(0,1)\neq \emptyset$, we
  follow the proof of \cite[Lemma 24]{AvellanedaLin}.
\end{proof}

\subsection{Lipschitz estimates}

In this Section, we prove the following result, which is the generalization of \cite[Lemma 16]{AvellanedaLin} (with
$f=0$ there) to he present setting:
\begin{theoreme}\label{lem16}
  Assume \ref{H1} through \ref{H6}. Let $y\in\RR^d$, $R>0$, and assume that $v^\varepsilon\in H^1(B(0,2R))$ is a solution to
  \begin{displaymath}
    -\div\left(a\left(\frac x \varepsilon + y \right) \nabla v^\varepsilon (x)\right) = 0 \quad \text{in} \quad B(0,2R).
  \end{displaymath}
Then, there exists a constant $C$ depending only on the coefficient $a$ such that
\begin{equation}
  \label{eq:estimation_lispchitz}
  \sup_{x\in B(0,R)} \left|\nabla v^\varepsilon(x)\right|\leq \frac C R \left(\fint_{B(0,2R)} \left|v^\varepsilon\right|^2\right)^{1/2}.
\end{equation}
\end{theoreme}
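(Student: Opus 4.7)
The plan is to follow the compactness method of Avellaneda--Lin, substituting uniform H--convergence (Proposition~\ref{prop:uniform-H-cv}) for periodicity at the only point where the latter is used in the periodic argument. First I reduce to the case $R=1$, $y=0$: setting $\widetilde v(x):=v^\varepsilon(Rx)$ and $\widetilde\varepsilon:=\varepsilon/R$, the function $\widetilde v$ solves $-\div(a(x/\widetilde\varepsilon+y)\nabla\widetilde v)=0$ in $B(0,2)$, and \eqref{eq:estimation_lispchitz} becomes $\sup_{B(0,1)}|\nabla\widetilde v|\le C(\fint_{B(0,2)}|\widetilde v|^2)^{1/2}$. By translation, it suffices to estimate $|\nabla\widetilde v(0)|$. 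I then normalise so that $\fint_{B(0,1)}|\widetilde v|^2\le1$ and seek a bound on $|\nabla\widetilde v(0)|$ that is uniform in $\widetilde\varepsilon>0$ and in $y\in\RR^d$.

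The heart of the argument is a \emph{one-step improvement lemma}: there exist $\theta\in]0,1/4[$, $\varepsilon_0>0$, $C_0>0$ and $\sigma\in]0,1[$ such that, whenever $\varepsilon<\varepsilon_0$, $y\in\RR^d$ and $v^\varepsilon$ solves the equation in $B(0,1)$ with $\fint_{B(0,1)}|v^\varepsilon|^2\le1$, one can find $b\in\RR^d$ and $M\in\RR$ with $|b|+|M|\le C_0$ such that
\begin{equation*}
\fint_{B(0,\theta)}\bigl|v^\varepsilon(x)-M-b\cdot x-\varepsilon\bigl(w_b(x/\varepsilon+y)-w_b(y)\bigr)\bigr|^2\,dx\le\theta^{2+2\sigma}.
\end{equation*}
I argue by contradiction exactly as in Lemma~\ref{lm7} and Lemma~\ref{lm22}: given sequences $\varepsilon_n\to0$, $y_n\in\RR^d$ and $v^{\varepsilon_n}$ violating the inequality, Caccioppoli gives $H^1_{\rm loc}(B(0,1))$--boundedness of $v^{\varepsilon_n}$, so up to extraction one has weak $H^1$ and strong $L^2$ convergence to some $v^*$. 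Proposition~\ref{prop:uniform-H-cv} (this is the only place \ref{H1}--\ref{H6} enter beyond Caccioppoli) identifies $v^*$ as a solution of $-\div(a^*\nabla v^*)=0$, hence $v^*$ is smooth and $|v^*(x)-v^*(0)-\nabla v^*(0)\cdot x|\le C|x|^2\|v^*\|_{L^2(B(0,1))}$. Choosing $b=\nabla v^*(0)$ and $M=v^*(0)$ and fixing $\theta$ so small that $C\theta^4<\tfrac12\theta^{2+2\sigma}$ yields the desired contradiction.

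The main obstacle, specific to the non-periodic setting, is to justify that the corrector term is negligible in the compactness limit: one needs $\varepsilon_n(w_b(\cdot/\varepsilon_n+y_n)-w_b(y_n))\to0$ in $L^2(B(0,\theta))$. In the periodic case this is immediate since $w_b\in L^\infty$, whereas here only strict sublinearity is available. I invoke Lemma~\ref{lm:rq-sous-linearite}: given $\delta>0$, there exists $A$ such that $|w_b(z+y_n)-w_b(y_n)|\le\delta(1+|z|)$ for $|z|\ge A$; applied with $z=x/\varepsilon_n$ this gives $\varepsilon_n|w_b(x/\varepsilon_n+y_n)-w_b(y_n)|\le\delta(\varepsilon_n+|x|)$ on $\{|x|\ge A\varepsilon_n\}$, while on the vanishing set $\{|x|\le A\varepsilon_n\}$ I use the uniform $C^{0,\alpha}$ bound on $\nabla w_b$ coming from Lemma~\ref{lm:pr2.3} together with \ref{H4}. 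Letting $n\to\infty$ and then $\delta\to0$ gives the required strong $L^2$ convergence, which is what allows to pass to the limit in the quadratic functional.

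Finally I iterate the one-step lemma in the standard AL fashion: the remainder $v^\varepsilon-M-b\cdot x-\varepsilon(w_b(\cdot/\varepsilon+y)-w_b(y))$ again solves $-\div(a(\cdot/\varepsilon+y)\nabla\cdot)=0$ by definition of $w_b$, so rescaling from $B(0,\theta^k)$ to $B(0,1)$ produces the same problem with parameter $\varepsilon/\theta^k$ and a new translation $y$. As long as $\varepsilon/\theta^k<\varepsilon_0$, one obtains a sequence $b_k$ with $|b_k|\le C_0\theta^{k\sigma}\|v^\varepsilon\|_{L^2(B(0,1))}$, yielding a convergent telescoping expression for $\nabla v^\varepsilon(0)$ up to the mesoscopic scale $\theta^k\sim\varepsilon$. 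For scales smaller than $\varepsilon$ one rescales by $\varepsilon$: the coefficient becomes $a(x+y)$, which is $C^{0,\alpha}_{\rm unif}$ by \ref{H2}, and classical Schauder estimates close the argument. Summing all contributions gives $|\nabla v^\varepsilon(0)|\le C\|v^\varepsilon\|_{L^2(B(0,1))}$, and translation invariance of the argument yields the full $L^\infty$ bound~\eqref{eq:estimation_lispchitz}.
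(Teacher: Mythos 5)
Your proposal is correct and follows essentially the same compactness strategy as the paper: a one-step improvement lemma proved by contradiction via Proposition~\ref{prop:uniform-H-cv} (corresponding to Lemma~\ref{lm14}), an iteration (corresponding to Lemma~\ref{lm15}), and a final blow-up at scale $\varepsilon$ using Schauder. The only differences are cosmetic — you phrase the one-step improvement in $L^2$ with coefficients $(b,M)$ taken from the limit $v^*$, whereas the paper uses the $\sup$ norm with the explicit coefficient $\fint_{B(0,\theta)}\partial_j v^\varepsilon$, and you spell out more explicitly how Lemma~\ref{lm:rq-sous-linearite} kills the corrector term in the limit — but the underlying argument is the same.
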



As we did for the proof of Hölder estimates above, we are going to apply the proof of \cite{AvellanedaLin},
replacing, when necessary, periodicity by assumptions \ref{H3} through \ref{H6}.

We first prove a result that is the generalization of \cite[Lemma 14]{AvellanedaLin} (with $f=0$ there) to our setting.
\begin{lemme}
  \label{lm14}
  Assume that the matrix-valued coefficient $a$ satisfies Assumptions~\ref{H1} through \ref{H6}, and
  let $\gamma\in ]0,1[.$ Then there exists $\varepsilon_0>0$ and $\theta \in ]0,1/4[$ depending only on $a$ and
  $\gamma$ such that, if $\varepsilon < \varepsilon_0$ and if $v^\varepsilon$ satisfies 
  \begin{equation}\label{eq:30}
    -\div\left(a\left(\frac x \varepsilon + y \right) \nabla v^\varepsilon (x)\right) = 0 \quad \text{in} \quad B(0,1),
  \end{equation}
then
\begin{multline}
  \label{eq:31}
  \sup_{x\in B(0,\theta)} \left|v^\varepsilon(x) - v^\varepsilon(0) - \sum_{j=1}^d \left[x_j + \varepsilon
      \left(w_{e_j}\left(\frac x \varepsilon+ y\right) - w_{e_j}(y)\right)
    \right]\fint_{B(0,\theta)}\partial_j v^\varepsilon \right| \\ \leq \theta^{1+\gamma}\left(\fint_{B(0,1)} |v^\varepsilon|^2\right)^{1/2}
\end{multline}
\end{lemme}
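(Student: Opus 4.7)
The plan is to mimic the compactness/contradiction argument of \cite[Lemma 14]{AvellanedaLin}, with periodicity replaced by two ingredients already at our disposal: uniform $H$-convergence (Proposition~\ref{prop:uniform-H-cv}) and the strict sublinearity of the correctors stated in Lemma~\ref{lm:rq-sous-linearite}. First I would fix a ``benchmark'' estimate at the homogenized level. If $v^*\in H^1(B(0,1))$ solves $-\div(a^*\nabla v^*)=0$, then since $a^*$ is constant, $v^*$ is smooth in $B(0,1/2)$ and a Taylor expansion combined with the Caccioppoli/Nash-Moser bounds yields
\begin{displaymath}
  \sup_{x\in B(0,\theta)} \left| v^*(x) - v^*(0) - \sum_{j=1}^d x_j \fint_{B(0,\theta)} \partial_j v^* \right|
  \leq C\,\theta^2 \left(\fint_{B(0,1)} |v^*|^2 \right)^{1/2},
\end{displaymath}
for any $\theta\in ]0,1/4[$. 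Choosing $\theta$ so small that $C\theta^{1-\gamma}\leq 1/2$, this benchmark becomes $\leq \tfrac12 \theta^{1+\gamma}(\fint_{B(0,1)}|v^*|^2)^{1/2}$; we freeze this $\theta$ once and for all.

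Next I would argue by contradiction, following the exact pattern of Lemmas~\ref{lm7} and~\ref{lm22} above. Suppose that no $\varepsilon_0$ works; then there exist sequences $\varepsilon_n\to 0$, $y_n\in\RR^d$ and solutions $v^{\varepsilon_n}$ of \eqref{eq:30} (with $\varepsilon=\varepsilon_n$, $y=y_n$), normalized so that $\fint_{B(0,1)}|v^{\varepsilon_n}|^2 = 1$, for which the inequality~\eqref{eq:31} is violated. By the Caccioppoli inequality the sequence is bounded in $H^1_{\rm loc}(B(0,1))$; extracting, $v^{\varepsilon_n}\to v^*$ strongly in $L^2_{\rm loc}(B(0,1))$ and weakly in $H^1_{\rm loc}(B(0,1))$, and the weak limit of the gradients ensures $\fint_{B(0,\theta)}\partial_j v^{\varepsilon_n} \to \fint_{B(0,\theta)}\partial_j v^*$. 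Proposition~\ref{prop:uniform-H-cv} (the role played here by Assumptions~\ref{H1}--\ref{H6}) guarantees that $v^*$ solves $-\div(a^*\nabla v^*)=0$ in $B(0,1)$, and lower semicontinuity of the $L^2$ norm gives $\fint_{B(0,1)}|v^*|^2\leq 1$. The only remaining ingredient is the \emph{uniform} convergence, on $B(0,\theta)$, of the corrector term
\begin{displaymath}
  \varepsilon_n\left(w_{e_j}\!\left(\tfrac{x}{\varepsilon_n}+y_n\right) - w_{e_j}(y_n)\right) \longrightarrow 0,
\end{displaymath}
which is the only place where one would naively need periodicity. But Lemma~\ref{lm:rq-sous-linearite} asserts that the supremum over $y\in\RR^d$ of $|w_{e_j}(z+y)-w_{e_j}(y)|/(1+|z|)$ vanishes as $|z|\to+\infty$; applying this with $z=x/\varepsilon_n$ and using $|x|\leq \theta$ gives the required uniform bound, independently of the sequence $y_n$. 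Passing to the limit in the negation of~\eqref{eq:31} then yields
\begin{displaymath}
  \sup_{x\in B(0,\theta)} \left| v^*(x) - v^*(0) - \sum_{j=1}^d x_j \fint_{B(0,\theta)} \partial_j v^* \right|
  \geq \theta^{1+\gamma},
\end{displaymath}
contradicting our benchmark estimate for $v^*$.

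The main obstacle, as always in the Avellaneda--Lin scheme, is the uniform vanishing of the corrector perturbation inside the compactness argument. In the periodic case this is elementary because $\varepsilon w_p(x/\varepsilon)$ is bounded by $\varepsilon\|w_p\|_{L^\infty}$; here, $w_p$ need not be bounded and can grow sublinearly, and the extra difficulty is that the base point $y_n$ wanders off to infinity. This is precisely the reason why Assumption~\ref{H5} is phrased with an arbitrary sequence of translations, and the reason why the equivalence \eqref{eq:ss-linearite1} of Lemma~\ref{lm:rq-sous-linearite} is the right formulation for this compactness step. Once this uniform sublinearity is invoked, every other step reduces to the arguments already carried out in Lemmas~\ref{lm7} and~\ref{lm22}.
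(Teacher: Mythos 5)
Your proposal follows the same compactness/contradiction scheme as the paper, but there is a gap in the passage to the limit. The left-hand side of \eqref{eq:31} is a supremum over $x\in B(0,\theta)$ of a pointwise quantity involving $v^{\varepsilon_n}(x)$ and $v^{\varepsilon_n}(0)$; to pass to the limit in this supremum you need locally uniform (i.e.\ $C^0$) convergence of $v^{\varepsilon_n}$ to $v^*$ on $\overline{B(0,\theta)}$, not merely strong $L^2_{\rm loc}$ convergence plus weak $H^1_{\rm loc}$ convergence, which is all your argument provides. Strong $L^2$ convergence does not control $\sup_x$, and it certainly does not justify evaluating at the single point $x=0$. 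The paper fills this in by invoking Theorem~\ref{th1}: since each $v^{\varepsilon_n}$ solves a uniformly elliptic equation with uniformly $C^{0,\alpha}$ coefficients, the De Giorgi--Nash--Moser interior estimate gives $\|v^{\varepsilon_n}\|_{C^{0,\beta}(B(0,1/2))}\leq C$ independently of $n$, and then Arzel\`a--Ascoli upgrades the convergence to $C^0(\overline{B(0,\theta)})$, which is what makes the limit passage legitimate. You should add this step explicitly; without it, "passing to the limit in the negation of \eqref{eq:31}" is not justified. Apart from this, your identification of the two essential replacements for periodicity (Proposition~\ref{prop:uniform-H-cv} to show $v^*$ solves the constant-coefficient equation, and Lemma~\ref{lm:rq-sous-linearite} for the uniform-in-$y_n$ vanishing of $\varepsilon_n(w_{e_j}(x/\varepsilon_n+y_n)-w_{e_j}(y_n))$) matches the paper's proof exactly.
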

\begin{proof}
  As in the proof of Lemma~\ref{lm22}, we argue by contradiction. Let $v^*\in H^1(B(0,1/2))$ be a solution to
\begin{equation}
  \label{eq:32}
  -\div\left(a^* \nabla v^*\right) = 0 \quad \text{in} \quad B\left(0,\frac12\right).
\end{equation}
Since $a^*$ is constant, $\partial_j\partial_j v^*$ is also a solution to
(\ref{eq:32}). Hence, applying the interior H\"older estimate of \cite[Theorem 8.24]{GT}, we have
$\|D^2 v^*\|_{C^{0,\beta}(B(0,1/8))} \leq C \|D^2 v^*\|_{L^2(B(0,1/4))},$
where $C$ and $\beta$ depend only on $a^*$. Hence,
\begin{equation}
  \label{eq:33}
  \|D^2 v^*\|_{L^\infty(B(0,1/8))} \leq C \|D^2 v^*\|_{L^2(B(0,1/4))}.
\end{equation}
Then applying the Caccioppoli inequality \cite[Proposition 2.1, p 76]{Giaquinta} twice, we infer
\begin{equation}
  \label{eq:34}
  \int_{B(0,1/4)} |D^2 v^*|^2 \leq C \int_{B(0,1/2)} |v^*|^2.
\end{equation}
In (\ref{eq:33}) and (\ref{eq:34}), the constant $C$ depends only on $a^*$.
Using a Taylor expansion, and applying (\ref{eq:33}) and (\ref{eq:34}) to bound the remainder, we find that
there exists a constant $C_0$ depending only on $a^*$ such that
\begin{displaymath}
  \sup_{x\in B(0,\theta)} \left|v^*(x) - v^*(0) - x\cdot\fint_{B(0,\theta)} \nabla v^*\right|\leq C_0\theta^2
  \left(\fint_{B(0,1/2)} |v^*|^2 \right)^{1/2}.
\end{displaymath}
Hence, choosing $\theta$ such that $C_0\theta^2  \leq \frac{\theta^{1+\gamma}}{2^{1+d/2}}$, we find that $v^*$ satisfies
(\ref{eq:31}) $w_{e_j}$ is replaced by $0$, that is,
\begin{equation}
  \label{eq:36}
  \sup_{x\in B(0,\theta)} \left|v^* - v^*(0) - x\cdot \fint_{B(0,\theta)}\nabla v^* \right| \\ \leq
  \frac{\theta^{1+\gamma}}2\frac 1 {2^{d/2}}\left(\fint_{B(0,1/2)} |v^*|^2\right)^{1/2}.
\end{equation}
The condition on $\theta$ reads $\theta \leq \left(2^{1+d/2}C_0 \right)^{-1/(1-\gamma)}$,
which depends only on $a^*$, $d$ and $\gamma$.

\medskip

Next, we assume
that (\ref{eq:31}) does not hold, that is, there exists sequences $\varepsilon_n\to 0$,
$y_n\in\RR^d$ and $v^{\varepsilon_n} \in H^1(B(0,1))$ such that (\ref{eq:30}) holds (with
$\varepsilon=\varepsilon_n,$ $y=y_n$, $v^\varepsilon=v^{\varepsilon_n}$), and 
\begin{multline}
  \label{eq:35}
  \sup_{x\in B(0,\theta)} \left|v^{\varepsilon_n}(x) - v^{\varepsilon_n}(0) - \sum_{j=1}^d \left[x_j + \varepsilon_n
      \left(w_{e_j}\left(\frac x {\varepsilon_n}+ y\right) - w_{e_j}(y)\right)
    \right]\fint_{B(0,\theta)}\partial_j v^{\varepsilon_n} \right| \\ > \theta^{1+\gamma}\left(\fint_{B(0,1)} |v^{\varepsilon_n}|^2\right)^{1/2}.
\end{multline}
Multiplying $v^{\varepsilon_n}$ by some constant if necessary, we may assume that $\displaystyle \fint_{B(0,1)} |v^{\varepsilon_n}|^2 =
1$. Applying the Caccioppoli inequality, we deduce that $v^{\varepsilon_n}$ is bounded in $H^1(B(0,1/2))$, hence, up to
extracting a subsequence, we have $v^{\varepsilon_n}\longrightharpoonup
v^*$ in $H^1(B(0,1/2))\cap L^2(B(0,1))$. Applying Proposition~\ref{prop:uniform-H-cv}
(thereby using Assumptions \ref{H1} through \ref{H6}), we prove that $v^*$
satisfies (\ref{eq:32}), hence (\ref{eq:36}). Next, applying Theorem~\ref{th1}, we have
$\|v^{\varepsilon_n}\|_{C^{0,\beta}(B(0,1/2))} \leq C$. This allows to
pass to the limit in the first two terms of the left-hand side of
(\ref{eq:35}). Weak convergence in $H^1(B(0,1/2))$ allows to pass to the limit in the term
$\displaystyle\fint_{B(0,\theta)} \partial_j v^{\varepsilon_n}$. Moreover,
Assumptions \ref{H1} through \ref{H5} allow to apply
Lemma~\ref{lm:rq-sous-linearite}, which implies that, for all $j\in \{1,2,\dots,d\}$,
\begin{displaymath}
  \sup_{y\in\RR^d} \sup_{x\in B(0,1)} \varepsilon \left| w_{e_j}\left(\frac x \varepsilon + y \right) -
    w_{e_j}(y)\right| \mathop{\longrightarrow}_{\varepsilon\to 0} 0.
\end{displaymath}
Hence, passing to the limit in (\ref{eq:35}), we find
\begin{multline*}
  \sup_{x\in B(0,\theta)} \left|v^*(x) - v^*(0) - x\cdot \fint_{B(0,\theta)} \nabla v^* \right| \geq
  \liminf_{n\to+\infty}\, \theta^{1+\gamma} \left(\fint_{B(0,1)}\left|v^{\varepsilon_n}\right|^2\right)^{1/2}\\ \geq
  \theta^{1+\gamma} \left(\fint_{B(0,1)} \left|v^*\right|^2\right)^{1/2} = \frac{\theta^{1+\gamma}}{2^{d/2}} \left(\fint_{B(0,1/2)} \left|v^*\right|^2\right)^{1/2},
\end{multline*}
and we reach a contradiction with (\ref{eq:36}).
\end{proof}

As in \cite[Lemma 15]{AvellanedaLin} (with $f=0$ there), an induction argument allows to prove the following
\begin{lemme}
  \label{lm15}
  Assume \ref{H1} through \ref{H6}, and that $\gamma
  \in ]0,1[$. Let $\theta$ and $\varepsilon_0$ be given by Lemma~\ref{lm14}. There exists $C>0$ depending only on
  $\theta$ such that, for any $y\in \RR^d$, if $0<\varepsilon\leq \varepsilon_0\theta^n$, $n\in \NN$, and if
  $v^\varepsilon\in H^1(B(0,1))$ satisfies (\ref{eq:30}), we have
  \begin{displaymath}
    \sup_{x\in B(0,\theta^{n+1})} \left| v^\varepsilon(x) - v^\varepsilon(0) - \sum_{j=1}^d \left[ x_j +
        \varepsilon\left(w_{e_j}\left(\frac x \varepsilon+y\right) - w_{e_j}(y) \right)\right] \kappa_j(n)\right|
    \leq \theta^{(1+n)(1+\gamma)} \left\|v^\varepsilon\right\|_{L^\infty(B(0,1))},
  \end{displaymath}
where $\kappa_j(n)$ satisfies
\begin{equation}
  \label{eq:38}
  \sup_{1\leq j\leq d} |\kappa_j(n)|\leq C \|v^\varepsilon\|_{L^\infty(B(0,1))}\sum_{\ell=0}^n \theta^{\gamma \ell}.
\end{equation}
\end{lemme}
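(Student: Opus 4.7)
The plan is to proceed by induction on $n$. The base case $n = 0$ is essentially Lemma~\ref{lm14} itself: I would set $\kappa_j(0) := \fint_{B(0,\theta)}\partial_j v^\varepsilon$ and bound the right-hand side of \eqref{eq:31} by $\|v^\varepsilon\|_{L^\infty(B(0,1))}$. Caccioppoli's inequality combined with Cauchy--Schwarz then gives $|\kappa_j(0)| \leq C\|v^\varepsilon\|_{L^\infty(B(0,1))}$, which is the case $n=0$ of \eqref{eq:38}.

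For the inductive step from $n$ to $n+1$, I would assume $\varepsilon \leq \varepsilon_0 \theta^{n+1}$ (which forces the hypothesis at level $n$ as well, since $\theta < 1$). The crucial observation is that, writing $\Phi_j(x) := x_j + \varepsilon\bigl[w_{e_j}(x/\varepsilon+y) - w_{e_j}(y)\bigr]$, each $\Phi_j$ solves $-\div(a(\cdot/\varepsilon+y)\nabla \Phi_j) = 0$ by the corrector equation \eqref{eq:correcteur}. Hence the ``remainder''
\[
E_n(x) := v^\varepsilon(x) - v^\varepsilon(0) - \sum_{j=1}^d \Phi_j(x)\kappa_j(n)
\]
also solves the same homogeneous equation, and satisfies $E_n(0) = 0$. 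I would then rescale to the unit ball by setting $\varepsilon' := \varepsilon/\theta^{n+1} \leq \varepsilon_0$ and
\[
W(X) := \theta^{-(n+1)(1+\gamma)}\, E_n(\theta^{n+1}X), \qquad X \in B(0,1).
\]
A direct chain-rule computation shows that $W$ solves $-\div(a(X/\varepsilon'+y)\nabla W) = 0$ on $B(0,1)$, and the induction hypothesis provides $\|W\|_{L^\infty(B(0,1))} \leq \|v^\varepsilon\|_{L^\infty(B(0,1))}$.

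Applying Lemma~\ref{lm14} to $W$ at scale $\varepsilon'$ (with translation $y$ unchanged) yields coefficients $\lambda_j := \fint_{B(0,\theta)}\partial_j W$ satisfying the estimate \eqref{eq:31}. Since $X/\varepsilon' = \theta^{n+1}X/\varepsilon$, one has the identity $X_j + \varepsilon'\bigl[w_{e_j}(X/\varepsilon'+y) - w_{e_j}(y)\bigr] = \theta^{-(n+1)}\Phi_j(\theta^{n+1}X)$. Substituting $x = \theta^{n+1}X$ and multiplying through by $\theta^{(n+1)(1+\gamma)}$, I recover exactly the statement at level $n+1$ with the updated coefficient $\kappa_j(n+1) := \kappa_j(n) + \theta^{(n+1)\gamma}\lambda_j$. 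The bound $|\lambda_j| \leq C\|v^\varepsilon\|_{L^\infty(B(0,1))}$ (with $C$ depending only on $a$ and $\theta$) follows from Caccioppoli and Cauchy--Schwarz applied to $W$, so the recursion $|\kappa_j(n+1)| \leq |\kappa_j(n)| + C\theta^{(n+1)\gamma}\|v^\varepsilon\|_{L^\infty(B(0,1))}$ telescopes into \eqref{eq:38}.

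The main obstacle to carry this out cleanly is the rescaling bookkeeping: one must verify that subtracting the $\varepsilon$-harmonic combination $\sum_j \Phi_j\kappa_j(n)$ from $v^\varepsilon$ preserves the equation, and that the rescaled function $W$ solves the equation with $\varepsilon$ replaced by $\varepsilon' = \varepsilon/\theta^{n+1}$ and the \emph{same} translation $y$. It is precisely this self-similarity of the rescaled problem that allows Lemma~\ref{lm14} to be reapplied at every step with uniform constants, and that converts the single-step Lipschitz-type estimate of Lemma~\ref{lm14} into the iterated, geometric decay asserted here.
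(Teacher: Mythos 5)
Your proof is correct and follows essentially the same rescaling-and-induction strategy that the paper references to Avellaneda--Lin's Lemma~15: subtract the corrector-adapted affine combination $\sum_j \Phi_j \kappa_j(n)$, which still solves the homogeneous equation, rescale by $\theta^{n+1}$ to land back on $B(0,1)$ with the rescaled parameter $\varepsilon' = \varepsilon/\theta^{n+1}\leq \varepsilon_0$ and the \emph{same} translation $y$, and reapply Lemma~\ref{lm14}, with the recursion $\kappa_j(n+1) = \kappa_j(n) + \theta^{(n+1)\gamma}\lambda_j$ yielding the geometric series bound \eqref{eq:38}. The identity $X_j + \varepsilon'[w_{e_j}(X/\varepsilon'+y)-w_{e_j}(y)] = \theta^{-(n+1)}\Phi_j(\theta^{n+1}X)$ and the chain-rule verification that $W$ solves the equation with coefficient $a(\cdot/\varepsilon'+y)$ are exactly the bookkeeping points that make the iteration close, and you have handled them correctly.
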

\begin{remarque}
  In (\ref{eq:38}), the important point is that $C$ depends on $\theta$ but not on $n$. Hence, since $\theta<1$, it
  implies $\displaystyle \sup_{1\leq j\leq d} |\kappa_j(n)|\leq C \|v^\varepsilon\|_{L^\infty(B(0,1))}$, and will
  be used as such in the sequel. However, the form (\ref{eq:38}) is more convenient for the induction proof.
\end{remarque}

\begin{proof}[Proof of Theorem~\ref{lem16}] This exactly the proof of
  \cite[Lemma 16]{AvellanedaLin}, based on Lemma~\ref{lm14} and
  Lemma~\ref{lm15}. We therefore omit it.
\end{proof}

\section{Estimates in the inhomogeneous case}
\label{sec:inhomogeneous-case}

In this Section, we deal with the non-homogeneous case, that is, the
case when the right-hand side of (\ref{eq:96}) is some $\div(f)$, $f\in L^2(\Omega)$, with $f\neq 0$.

We first prove estimates on the Green function $G^\varepsilon$ of the operator $-\div\left(a\left(\frac
    x \varepsilon\right)\nabla \cdot\right)$ with homogeneous Dirichlet boundary conditions. This uses the results
on the homogeneous case, since $x\mapsto G^\varepsilon(x,y)$ and $x\mapsto \nabla_y G^\varepsilon(x,y)$ are
solution to $-\div\left(a\left(\frac x \varepsilon\right) \nabla_x v\right) = 0$ in any open set that does not
contain $y$. Then, we use the representation $u^\varepsilon(x) = \int_\Omega G^\varepsilon(x,y)f(y)dy$
to prove estimates in the case $f\neq 0$.

\subsection{Green function estimates}

First, we recall that in \cite{GruterWidman}, $G^\varepsilon$ was proved
 to exist and be unique in $W^{1,1}_0(\Omega)$. In addition, the following estimates were established in \cite{GruterWidman,Dolzmann}:
\begin{equation}
  \label{eq:green_0}
  \forall x\neq y\in\Omega, \quad 0\leq G^\varepsilon(x,y)\leq \frac C {|x-y|^{d-2}},
\end{equation}
\begin{displaymath}
  \|\nabla_x G^\varepsilon(\cdot, y)\|_{L^{\frac d {d-1},\infty}(\Omega)} +   \|\nabla_y G^\varepsilon(x,\cdot)\|_{L^{\frac d {d-1},\infty}(\Omega)} \leq C,
\end{displaymath}
where $C$ depends only on $\|a\|_{L^\infty}$ and on its ellipticity constant. Here, $L^{p,\infty}$ denotes the
Marcinkiewicz space of order $p$, as defined, e.g., in \cite{BerghLofstrom}.

We now show
\begin{theoreme}
  \label{th:estimations_G}
  Let $d\geq 3$. Assume \ref{H1} through
  \ref{H6}. Let $\Omega$ be a $C^{1,\alpha}$ bounded domain. Denote by $G^\varepsilon$ the Green function of the operator
  $-\div\left(a\left(\frac x \varepsilon\right)\nabla \cdot\right)$ on $\Omega$ with homogeneous Dirichlet boundary
  conditions. For any $\Omega_1\subset\subset\Omega$, we have the following estimates:
  \begin{enumerate}
  \item \label{item:3}
    \begin{equation}
      \label{eq:green_1}
      \forall x\in \Omega_1,\ \forall y \in \Omega, \ x\neq y, \quad |\nabla_x G^\varepsilon(x,y)|\leq \frac C {|x-y|^{d-1}}.
    \end{equation}
\item \label{item:5}If in addition $a^T$ satisfies Assumptions~\ref{H3}, \ref{H4}, \ref{H5} and \ref{H6}, then we have
  \begin{equation}
    \label{eq:green_2}
      \forall y\in \Omega_1,\ \forall x \in \Omega, \ x\neq y,  \quad |\nabla_y G^\varepsilon(x,y)|\leq \frac C {|x-y|^{d-1}},
  \end{equation}
  \begin{equation}
    \label{eq:green_3}
      \forall x\in \Omega_1,\ \forall y \in \Omega_1, \ x\neq y,  \quad |\nabla_y\nabla_x G^\varepsilon(x,y)|\leq \frac C {|x-y|^{d}}.
  \end{equation}
  \end{enumerate}
In (\ref{eq:green_1})-(\ref{eq:green_2})-(\ref{eq:green_3}), the various
constants $C$ depend only on the coefficient $a$, on $\Omega$ and on
$\Omega_1$.
\end{theoreme}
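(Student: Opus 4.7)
The strategy is the classical one of Avellaneda--Lin: deduce the pointwise gradient estimates on $G^\varepsilon$ from the uniform Lipschitz estimate of Theorem~\ref{lem16} applied to the map $x\mapsto G^\varepsilon(x,y)$ (respectively $y\mapsto G^\varepsilon(x,y)$), on a ball centered at the evaluation point and avoiding the singularity, combined with the pointwise bound \eqref{eq:green_0} of Gr\"uter--Widman. Set $d_0=\operatorname{dist}(\Omega_1,\partial\Omega)>0$, which is positive since $\Omega_1\subset\subset\Omega$.

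To prove \eqref{eq:green_1}, fix $x\in\Omega_1$ and $y\in\Omega\setminus\{x\}$, and set $r:=\tfrac{1}{4}\min(d_0,|x-y|)$. Then $B(x,2r)\subset\Omega\setminus\{y\}$ and the function $v^\varepsilon(z):=G^\varepsilon(z,y)$ solves $-\div(a(\cdot/\varepsilon)\nabla v^\varepsilon)=0$ on $B(x,2r)$. Theorem~\ref{lem16} (with $R=r$) yields
\begin{displaymath}
|\nabla_x G^\varepsilon(x,y)|\le \frac{C}{r}\left(\fint_{B(x,2r)} |G^\varepsilon(z,y)|^2\,dz\right)^{1/2}.
\end{displaymath}
For every $z\in B(x,2r)$ one has $|z-y|\ge |x-y|-2r\ge |x-y|/2$, so \eqref{eq:green_0} gives $G^\varepsilon(z,y)\le C'|x-y|^{2-d}$. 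Plugging this in, $|\nabla_x G^\varepsilon(x,y)|\le CC'\,r^{-1}|x-y|^{2-d}$. If $|x-y|\le d_0$, then $r=|x-y|/4$ and the bound becomes $C|x-y|^{1-d}$. If $|x-y|>d_0$, then $r=d_0/4$ is constant, and we bound $|x-y|^{2-d}\le \operatorname{diam}(\Omega)\,|x-y|^{1-d}$ to conclude, absorbing the geometric factors into the constant. This proves \eqref{eq:green_1}.

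For \eqref{eq:green_2}, observe that $y\mapsto G^\varepsilon(x,y)$ is the Green function of the adjoint operator $-\div(a^T(\cdot/\varepsilon)\nabla)$, hence satisfies this equation on $\Omega\setminus\{x\}$. Since $a^T$ satisfies \ref{H1}--\ref{H6} by assumption, Theorem~\ref{lem16} applies to it, and the same argument as above (with $y\in\Omega_1$, radius $s=\tfrac{1}{4}\min(d_0,|x-y|)$) yields \eqref{eq:green_2}. For the mixed estimate \eqref{eq:green_3}, fix $x_0\in\Omega_1$ and let $v_k(y):=\partial_{x_k}G^\varepsilon(x_0,y)$ for $1\le k\le d$. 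Differentiating the adjoint equation in $x_k$ (which commutes with $L^*_y$ since $a^T$ depends only on $y$), $v_k$ solves $-\div_y(a^T(y/\varepsilon)\nabla_y v_k)=0$ on $\Omega\setminus\{x_0\}$. Apply Theorem~\ref{lem16} for $a^T$ to $v_k$ on $B(y_0,2s)$ with $s=\tfrac{1}{4}\min(d_0,|x_0-y_0|)$: using \eqref{eq:green_1} already proved, we control $|v_k(y)|\le C|x_0-y_0|^{1-d}$ on $B(y_0,2s)$, and the Lipschitz estimate divides by $s$, giving $|\nabla_y\nabla_x G^\varepsilon(x_0,y_0)|\le C|x_0-y_0|^{-d}$, again after handling the two regimes $|x_0-y_0|\lessgtr d_0$.

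The main technical point is the justification, in the argument for \eqref{eq:green_3}, that the mixed partial derivative exists classically and that $v_k$ indeed satisfies the adjoint equation away from the singularity; this follows from the interior $C^{1,\alpha}$ regularity guaranteed by Theorem~\ref{th1} applied to $y\mapsto G^\varepsilon(x,y)$ (which gives Lipschitz continuity of $\nabla_y G^\varepsilon$ in both variables away from the diagonal), allowing the differentiation in $x$ to be performed by a difference quotient and a passage to the limit using Theorem~\ref{lem16}. Apart from this, the proof consists of a clean scaling argument, and no feature specific to $\widetilde a\equiv 0$ intervenes: assumptions \ref{H1}--\ref{H6} enter only through Theorem~\ref{lem16}.
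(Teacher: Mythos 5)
Your proof is correct and follows essentially the same strategy as the paper's: apply the uniform interior Lipschitz estimate of Theorem~\ref{lem16} on a ball centered at the evaluation point with radius comparable to $\min(\operatorname{dist}(\Omega_1,\partial\Omega),|x-y|)$, use the Gr\"uter--Widman bound \eqref{eq:green_0} to control the $L^2$ average, pass to the transposed operator for \eqref{eq:green_2}, and bootstrap once more for the mixed derivative. The only difference is cosmetic: for \eqref{eq:green_3} you apply Theorem~\ref{lem16} for $a^T$ to $y\mapsto\partial_{x_k}G^\varepsilon(x_0,y)$ controlled via \eqref{eq:green_1}, whereas the paper applies it for $a$ to $x\mapsto\nabla_yG^\varepsilon(x,y_0)$ controlled via \eqref{eq:green_2}; these are mirror images and both require $x,y\in\Omega_1$.
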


The above result is actually contained in \cite{AvellanedaLin}, if the coefficient $a$ is assumed to be
periodic. However, it is not stated as such, and its proof, which may be
found in the course of the proof of \cite[Lemma
17]{AvellanedaLin}, is different from the one we present here. 


\begin{proof}
  We first prove Assertion~\ref{item:3}. We define $\delta = \inf\left\{|x-y|,\ x\in\Omega_1,\ y\in\partial\Omega\right\}>0.$
Let $x_0\in \Omega_1$, $y_0\in \Omega\setminus\{x_0\}$. We set
  \begin{displaymath}
    R = \frac12 \min\Bigl(d(x_0,\partial\Omega)\ ,\ |x_0-y_0|\Bigr)
  \end{displaymath}
We have
\begin{equation}
  \label{eq:49}
  2R\leq |x_0-y_0|\leq CR,
\end{equation}
where the constant $C$ is $C = 2$  if $R=\frac12|x_0-y_0|$, and
$C=\frac{2\diam(\Omega)}\delta$ otherwise. In particular it depends only on $\Omega$ and
$\Omega_1$. Since $y_0\notin B(x_0,R),$
\begin{equation}\label{eq:52}
  -\div_x\left(a\left(\frac x \varepsilon\right) \nabla_x G^\varepsilon(x,y_0)\right) = 0 \quad \text{in}\quad B(x_0,R).
\end{equation}
Applying Theorem~\ref{lem16} to $x\mapsto G^\varepsilon(x,y_0)$, we have
\begin{displaymath}
  \left|\nabla_x G^\varepsilon(x_0,y_0)\right|\leq \frac C R \left(\fint_{B(x_0,R/2)} \left|G^\varepsilon(x,y_0)\right|^2dx\right)^{1/2}.
\end{displaymath}
Using (\ref{eq:green_0}), (\ref{eq:49}), and the triangle inequality,
$|x-y_0|\geq |x_0-y_0| - |x-x_0|$, we have
\begin{displaymath}
  \left(\fint_{B(x_0,R/2)}\left|G^\varepsilon(x,y_0)\right|^2 dx\right)^{1/2} \leq C \left(\fint_{B(x_0,R/2)} \frac 1
    {R^{2(d-2)}} dx \right)^{1/2} = \frac C {R^{d-2}}.
\end{displaymath}
Hence,
\begin{displaymath}
  |\nabla_x G^\varepsilon(x_0,y_0)|\leq \frac C {R^{d-1}}.
\end{displaymath}
Using (\ref{eq:49}) again, we find (\ref{eq:green_1}).

Next, we prove Assertion \ref{item:5}. It is well-known (see \cite[Theorem 1.3]{GruterWidman}) that the Green function
$G_T^\varepsilon$ of the operator $-\div\left(a^T\left(\frac x \varepsilon\right)\nabla \cdot\right)$ with
homogeneous Dirichlet condition satisfies $G_T^\varepsilon(x,y) = G^\varepsilon(y,x)$. Since $a^T$ satisfies
Assumptions~\ref{H1}, \ref{H2}, \ref{H3}, \ref{H4}, \ref{H5}, \ref{H6}, $G_T^\varepsilon$ satisfies
(\ref{eq:green_1}). This clearly implies (\ref{eq:green_2}).

Finally, we note that $\nabla_y G(x,y_0)$ is also a solution to (\ref{eq:52}). Hence, applying the proof of
Assertion~\ref{item:5} to $\nabla_y G$, we find (\ref{eq:green_3}).
\end{proof}

\subsection{$W^{1,p}$ estimates}

We now prove $W^{1,p}$ estimates on the solution $v^\varepsilon$ of (\ref{eq:second_membre_div}) below. The
following Proposition is the generalization of \cite[Theorem 2.4.1]{Shen} to the present setting. 

\begin{proposition}
  \label{pr:1}
  Assume \ref{H1} through \ref{H6}. Let
  $q\in ]2,+\infty[$, $y\in \RR^d,$ $R>0$ and $H\in L^q(B(0,2R),\RR^d)$. Assume that $v^\varepsilon\in H^1(B(0,2R))$ is a solution
  to
  \begin{equation}
    \label{eq:second_membre_div}
    -\div\left(a\left(\frac x\varepsilon+y\right) \nabla v^\varepsilon\right) = \div(H)\quad \text{in}\quad B(0,2R).
  \end{equation}
Then, there exists $C>0$ depending only on the coefficient $a$ and on $q$ (in particular it does not depend on $y$ nor on
$\varepsilon$) such that
\begin{displaymath}
  \left(\fint_{B(0,R)} |\nabla v^\varepsilon|^q \right)^{1/q} \leq C \left(\fint_{B(0,2R)} |H|^q
  \right)^{1/q} + C\left(\fint_{B(0,2R)} |\nabla v^\varepsilon|^2 \right)^{1/2}.
\end{displaymath}
\end{proposition}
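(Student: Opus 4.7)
The proof amounts to following verbatim the Shen real-variable strategy of \cite{Shen}, with the \emph{only} structural input coming from $a$ being the uniform Lipschitz estimate for homogeneous solutions, i.e.\ Theorem~\ref{lem16}. The starting observation is the self-improvement of gradient integrability (Meyers estimate): by the Caccioppoli inequality applied to $v^\varepsilon$ and by Gehring's lemma, there exists $q_0>2$ depending only on the ellipticity constant $\mu$ of Assumption~\ref{H1} such that, for every ball $B(x_0,r)$ with $B(x_0,2r)\subset B(0,2R)$,
\begin{displaymath}
\left(\fint_{B(x_0,r)}|\nabla v^\varepsilon|^{q_0}\right)^{1/q_0}
\le C\left(\fint_{B(x_0,2r)}|\nabla v^\varepsilon|^2\right)^{1/2}
+C\left(\fint_{B(x_0,2r)}|H|^{q_0}\right)^{1/q_0}.
\end{displaymath}
This provides an automatic baseline from $L^2$ to $L^{q_0}$, uniformly in $\varepsilon$ and $y$.

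To reach arbitrary $q>2$, I would apply Shen's real-variable lemma (the perturbation/extrapolation result used in the proof of \cite[Theorem~2.4.1]{Shen}). Its hypothesis requires, on each ball $B=B(x_0,r)$ included in $B(0,2R)$ with $r$ small, a decomposition of $\nabla v^\varepsilon$ into a ``smooth'' piece and a ``small'' piece. To obtain it, I would solve the Dirichlet problem
\begin{displaymath}
\left\{
\begin{aligned}
&-\div\!\left(a\!\left(\tfrac{x}{\varepsilon}+y\right)\nabla w\right)=\div\!\left(H\chi_{B(x_0,4r)}\right)&&\text{in }B(x_0,4r),\\
&w=0&&\text{on }\partial B(x_0,4r),
\end{aligned}\right.
\end{displaymath}
so that the energy estimate controls $\nabla w$ in $L^2$ (and the Meyers estimate above controls it in $L^{q_0}$) by $H$ on $B(x_0,4r)$. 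The difference $v^\varepsilon-w$ then solves the \emph{homogeneous} equation on $B(x_0,4r)$, so Theorem~\ref{lem16} yields
\begin{displaymath}
\|\nabla(v^\varepsilon-w)\|_{L^\infty(B(x_0,2r))}\le \frac{C}{r}\left(\fint_{B(x_0,4r)}|v^\varepsilon-w|^2\right)^{1/2},
\end{displaymath}
and hence, through Poincaré and the triangle inequality, $L^p$ control for \emph{every} $p<\infty$ in terms of $L^2$ averages of $\nabla v^\varepsilon$ and of $H$ over $B(x_0,4r)$. Thus on every small ball one writes $\nabla v^\varepsilon=\nabla w+\nabla(v^\varepsilon-w)$ with the first piece small in $L^2$ (bounded by the local $L^{q_0}$ norm of $H$) and the second piece bounded in $L^\infty$ (by local $L^2$ averages): this is exactly the input to Shen's lemma, whose conclusion is the desired $L^q$ bound.

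The main obstacle is not any individual analytic step, each being a textbook application, but rather checking that the real-variable machinery of \cite{Shen}, originally designed for periodic coefficients, goes through unchanged in our abstract setting. The verification is essentially trivial once one notes that periodicity enters Shen's proof \emph{solely} through the uniform Lipschitz estimate on homogeneous solutions, a role now played by Theorem~\ref{lem16} with constants uniform in $\varepsilon$ and in the translation $y$. The remaining ingredients (Meyers, the $L^2$ energy estimate, the Calderón--Zygmund-type covering of $B(0,R)$) are structure-free and hence transfer verbatim, yielding the stated estimate for all $q\in\,]2,+\infty[$.
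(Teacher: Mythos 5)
Your approach is essentially the paper's: the paper decomposes $v^\varepsilon = v_1^\varepsilon + v_2^\varepsilon$ on small balls, where $v_1^\varepsilon$ solves the inhomogeneous Dirichlet problem (controlled in $L^2$ by the energy estimate) and $v_2^\varepsilon$ solves the homogeneous equation (controlled in $L^\infty$ via Theorem~\ref{lem16} and Poincar\'e--Wirtinger), and then feeds these two pieces into Shen's real-variable Lemma~\ref{lmA16} and concludes by a covering argument, exactly as you propose. Your preliminary Meyers/Gehring self-improvement step is harmless but superfluous: Lemma~\ref{lmA16} only requires the decomposition piece $F_1=\nabla v_1^\varepsilon$ to be controlled in $L^2$ (which the energy estimate gives directly), so no intermediate $L^{q_0}$ bound is needed.
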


Before we get to the proof of Proposition~\ref{pr:1}, we first state the following Lemma, which is a simple
consequence of \cite[Theorem 2.4]{ShenCalderon} (see also \cite[Theorem 2.3.1]{Shen}):
\begin{lemme}
  \label{lmA16}
  Let $B_0 = B(x_0,R_0)$ be a ball of $\RR^d$, and $F\in L^2(4B_0)$. Let $2<q_1<q_2$, $f\in
  L^{q_1}(4B_0)$. Assume that there exists $K>0$ such that for any ball $B\subset 2B_0$ with $2|B|\leq
  |B_0|$, there exists $F_1\in L^2(2B)$ and $F_2\in L^{q_2}(2B) $ such that
  \begin{align*}
    &|F|\leq |F_1|+|F_2| \quad\text{in} \quad 2B, \\
    &\left(\fint_{2B} |F_1|^2 \right)^{1/2} \leq K \sup_{B \subset B' \subset 4B_0}
      \left(\fint_{B'} |f|^2 \right)^{1/2}, \\
&    \left(\fint_{2B} |F_2|^{q_2} \right)^{1/{q_2}} \leq K\left[\left(\fint_{4B} |F|^2 \right)^{1/2}+ \sup_{B \subset B' \subset 4B_0}
      \left(\fint_{B'} |f|^2 \right)^{1/2}\right],
  \end{align*}
where the supremum is taken over any ball $B'$ such that $B \subset B' \subset B(x_0,4R_0)$. Then, $F\in
L^{q_1}(B_0)$, and
\begin{displaymath}
  \left(\fint_{B_0} |F|^{q_1} \right)^{1/{q_1}} \leq C\left[\left(\fint_{4B_0} |F|^2 \right)^{1/2}+ 
      \left(\fint_{4B_0} |f|^{q_1} \right)^{1/q_1}\right],
\end{displaymath}
where $C$ depends on $K,q_1,q_2$ only. 
\end{lemme}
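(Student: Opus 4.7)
The plan is to deduce this statement directly from Shen's real-variable criterion for self-improving $L^p$ integrability, namely \cite[Theorem 2.4]{ShenCalderon}. That theorem asserts that if a function $F$ admits, on every sufficiently small ball $B\subset 2B_0$, a decomposition $F=F_1+F_2$ in which $F_1$ is controlled in $L^2(2B)$ by certain localized averages of an auxiliary function, and $F_2$ is controlled in $L^{q_2}(2B)$ by the $L^2$ average of $F$ on $4B$ plus the same auxiliary averages, then $F\in L^{q_1}(B_0)$ with the expected quantitative bound for any $2<q_1<q_2$. The main task is therefore to verify that our hypotheses fit this template.

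The only discrepancy is that the present hypothesis involves the supremum of $L^2$-averages over balls $B'$ with $B\subset B'\subset 4B_0$, while Shen's template is phrased in terms of a single local average. To bridge this I would introduce the auxiliary function
\begin{displaymath}
g(x) \;:=\; \sup_{x\in B'\subset 4B_0}\left(\fint_{B'}|f|^2\right)^{1/2},
\end{displaymath}
which is pointwise dominated by (the square root of) a truncated Hardy-Littlewood maximal function of $|f|^2\mathbf{1}_{4B_0}$. Since $q_1>2$, the maximal operator is bounded on $L^{q_1/2}$, so $\|g\|_{L^{q_1}(4B_0)}\leq C\|f\|_{L^{q_1}(4B_0)}$. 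In terms of $g$, the three hypothesized bounds take the single-ball form required by Shen's theorem (with $f$ replaced by $g$), and the conclusion reads
\begin{displaymath}
\left(\fint_{B_0}|F|^{q_1}\right)^{1/q_1} \;\leq\; C\left[\left(\fint_{4B_0}|F|^2\right)^{1/2} + \left(\fint_{4B_0}|g|^{q_1}\right)^{1/q_1}\right],
\end{displaymath}
which together with the maximal function bound yields the claim.

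For completeness I would recall the structure of Shen's argument. One fixes a large $\lambda$ and applies a Vitali / Calder\'on--Zygmund covering to the level set of the restricted maximal function $\mathcal{M}_{B_0}(|F|^2)$, producing a disjoint family $\{B_j\}$ on each of which $\fint_{2B_j}|F|^2$ is of order $\lambda$. On $B_j$ one invokes the decomposition hypothesis: the $L^{q_2}$ bound on $F_{2,j}$ combined with Chebyshev gives a $\lambda^{-q_2/2}$-decay of the measure of the portion of the level set coming from $F_{2,j}$, while the $L^2$ bound on $F_{1,j}$ contributes a term controlled by averages of $g^2$. Summing over $j$ and integrating the resulting good-$\lambda$ inequality against $\lambda^{q_1/2-1}\,d\lambda$ produces the desired $L^{q_1}$ bound.

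The main (and essentially only) obstacle lies in the reconciliation step described above: one must check that the sup-over-enclosing-balls formulation is legitimately replaced by a pointwise quantity of maximal-function type, and carefully track the scaling constants between $B$, $2B$, $4B$, and $4B_0$ so that Shen's hypothesis holds with a $K$ depending only on the original $K$, $d$, $q_1$ and $q_2$.
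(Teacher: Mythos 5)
Your proposal is correct in substance and takes the same route as the paper: the paper itself presents Lemma~\ref{lmA16} without proof, stating only that it is \emph{``a simple consequence of \cite[Theorem 2.4]{ShenCalderon} (see also \cite[Theorem 2.3.1]{Shen})''}.

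The one inaccuracy in your write-up is the premise that \emph{``Shen's template is phrased in terms of a single local average.''} In fact, the cited versions of Shen's real-variable criterion (Theorem~2.4 of \cite{ShenCalderon} and Theorem~2.3.1 of \cite{Shen}) already carry the hypothesis with $\sup_{B\subset B'}\bigl(\fint_{B'}|f|^2\bigr)^{1/2}$ on the right-hand side, i.e.\ precisely the formulation of the present lemma. So the ``only discrepancy'' you identify is not actually there, and the lemma really is a direct specialization of Shen's theorem (take $\Omega$ to be the ball $4B_0$ and rename constants). Your auxiliary-function detour is nevertheless mathematically sound: setting $g(x)=\sup_{x\in B'\subset 4B_0}\bigl(\fint_{B'}|f|^2\bigr)^{1/2}$, so that $g^2$ is dominated pointwise by the restricted Hardy--Littlewood maximal function of $|f|^2\mathbf{1}_{4B_0}$, the boundedness of $\mathcal M$ on $L^{q_1/2}$ (legitimate since $q_1>2$) gives $\bigl(\fint_{4B_0}g^{q_1}\bigr)^{1/q_1}\leq C\bigl(\fint_{4B_0}|f|^{q_1}\bigr)^{1/q_1}$, and the inequality $\sup_{B\subset B'\subset 4B_0}\bigl(\fint_{B'}|f|^2\bigr)^{1/2}\leq\inf_{x\in B}g(x)\leq\bigl(\fint_{2B}g^2\bigr)^{1/2}$ converts the hypotheses into whatever single-ball form one might want. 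This extra step is harmless but superfluous; the direct citation suffices, and the good-$\lambda$ mechanism you sketch is the content of Shen's proof, not something the paper reproves.
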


\begin{proof}[Proof of Proposition~\ref{pr:1}] The proof follows the lines of \cite[Theorem 2.4.1]{Shen}. However,
  since the setting is slightly different, we reproduce it here for the sake of clarity and for the reader's convenience.

  Let $x_0\in B(0,2R)$ and $R_0>0$ such that $B_0:=B(x_0,R_0)$ satisfies $8B_0 \subset B(0,2R)$. We intend to apply
  Lemma~\ref{lmA16} to $F=\nabla v^\varepsilon$ and $f=H$. For this purpose, we fix $y_0\in 2B_0$ and $R_1>0$ such
  that $B:= B(y_0,R_1)\subset 2B_0.$
  \begin{displaymath}
    v^\varepsilon = v_1^\varepsilon + v_2^\varepsilon,
  \end{displaymath}
where $v^\varepsilon_1$ satisfies
\begin{displaymath}
\left\{
      \begin{aligned}
        &-\div\left(a\left(\frac x {\varepsilon}+y\right) \nabla v_1^\varepsilon\right) = \div(H) & \text{ in } B(y_0,4R_1), \\
        &v_1^\varepsilon = 0 & \text{ in } \partial\left(B(y_0,4R_1)\right).
      \end{aligned}
\right.  
\end{displaymath}
Multiplying this equation by $v_1^\varepsilon$ and integrating by parts, we have
\begin{equation}
  \label{eq:53}
  \left(\fint_{B(y_0,4R_1)} |\nabla v^\varepsilon_1|^2 \right)^{1/2} \leq C \left(\fint_{B(y_0,4R_1)} |H|^2\right)^{1/2},
\end{equation}
where $C$ depends only on the ellipticity constant of $a$. On the other hand, $v_2^\varepsilon$ satisfies
\begin{displaymath}
  -\div\left(a\left(\frac x {\varepsilon}+y\right) \nabla v_2^\varepsilon\right) = 0\quad \text{in }  B(y_0,4R_1).
\end{displaymath}
Thus, applying Theorem~\ref{lem16} to $\displaystyle v_2^\varepsilon - \fint_{B(y_0,4R_1)} v_2^\varepsilon$, we have
\begin{displaymath}
  \|\nabla v_2^\varepsilon\|_{L^\infty(B(y_0,2R_1))} \leq \frac C R_1 \left(\fint_{B(y_0,4R_1)} \left| v_2^\varepsilon -
      \fint_{B(y_0,4R_1)} v_2^\varepsilon\right|^2\right)^{1/2},
\end{displaymath}
where the constant $C$ depends only on the coefficient $a$. Applying the Poincar\'e-Wirtinger inequality, this implies
\begin{displaymath}
  \|\nabla v_2^\varepsilon\|_{L^\infty(B(y_0,2R_1))} \leq C' \left(\fint_{B(y_0,4R_1)} |\nabla v_2^\varepsilon|^2\right)^{1/2}.
\end{displaymath}
The constant $C'$ is equal to
$C' = \frac C {R_1} C_{PW}(B(y_0,4R_1)) = C C_{PW}(B(y_0,4)),$ due to the scaling of the constant $C_{PW}$ in the
Poincar\'e-Wirtinger inequality. Hence, $C'$ depends
only on $a$. On the other hand, using (\ref{eq:53}) and the triangle inequality, we have
\begin{multline*}
  \left(\fint_{B(y_0,4R_1)} |\nabla v_2^\varepsilon|^2\right)^{1/2} \leq \left(\fint_{B(y_0,4R_1)} |\nabla
    v_1^\varepsilon|^2\right)^{1/2}+\left(\fint_{B(y_0,4R_1)} |\nabla v^\varepsilon|^2\right)^{1/2} \\ \leq C
  \left(\fint_{B(y_0,4R_1)} |H|^2\right)^{1/2} + \left(\fint_{B(y_0,4R_1)} |\nabla v^\varepsilon|^2\right)^{1/2}.
\end{multline*}
Thus,
\begin{equation}
  \label{eq:54}
  \left\|\nabla v_2^\varepsilon\right\|_{L^\infty(B(y_0,2R_1))}\leq C  \left(\fint_{B(y_0,4R_1)} |H|^2\right)^{1/2} + \left(\fint_{B(y_0,4R_1)} |\nabla v^\varepsilon|^2\right)^{1/2}.
\end{equation}
Collecting (\ref{eq:53}) and (\ref{eq:54}), we may apply Lemma~\ref{lmA16} (with $B_0=B(x_0,R_0)$, $q_1=q$,
$q_2=2q_1$, $f=H$, $F_1 = \nabla v_1^\varepsilon$, $F_2 = \nabla v_2^\varepsilon$, and $B=B(y_0,R_1)$) finding
\begin{displaymath}
  \left(\fint_{B_0} |\nabla v^\varepsilon|^q\right)^{1/q} \leq C \left[\left(\fint_{4B_0} |H|^q\right)^{1/q} + \left(\fint_{4B_0} |\nabla v^\varepsilon|^2\right)^{1/2} \right].
\end{displaymath}
This is valid for any $x_0$ and $R_0>0$ such that $B(x_0,8R_0)\subset B(0,2R)$. Hence, covering $B(0,R)$ by a
finite number of such balls, we conclude the proof. 
\end{proof}

\subsection{Lipschitz estimates}

Note that Proposition~\ref{pr:1} does not include the case $q=+\infty$. However, using the estimates we have proved
on the gradient of $G^\varepsilon$ in Theorem~\ref{th:estimations_G}, we are able to now derive Lipschitz estimates:
\begin{proposition}
  \label{pr:2}
  Assume that the coefficients $a$ and $a^T$ satisfy Assumptions~\ref{H1} through \ref{H6}. Let
  $\beta>0$ and $R>\varepsilon>0$, and assume that $H\in C^{0,\beta}(B(0,2R))$. Then, there exists a constant $C>0$ depending
  only on $a$ and $\beta$ such that, if $v^\varepsilon$ satisfies \eqref{eq:second_membre_div}, then
  \begin{multline}
    \label{eq:estimation_lipschitz_non_homogene}
    \|\nabla v^\varepsilon\|_{L^\infty(B(0,R))}\leq C \left(\fint_{B(0,2R)} |\nabla v^\varepsilon|^2\right)^{1/2} +
    C \varepsilon^\beta [H]_{C^{0,\beta}(B(0,2R))} \\+ C \ln\left(1+ \frac R \varepsilon\right) \|H\|_{L^\infty(B(0,2R))}.
  \end{multline}
\end{proposition}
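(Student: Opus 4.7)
The plan is to localize around an arbitrary $x_0 \in B(0,R)$, split $v^\varepsilon$ on the interior ball $B := B(x_0, R) \subset B(0,2R)$ into a particular solution (with zero Dirichlet data) and a homogeneous remainder, and then further decompose the particular solution using a cutoff in order to confine the Green function representation to a strictly interior subdomain, where Theorem~\ref{th:estimations_G}'s sharpest bound \eqref{eq:green_3} is available.

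Fix $x_0 \in B(0,R)$ and write $v^\varepsilon = v_1^\varepsilon + v_2^\varepsilon$ on $B$, where $v_1^\varepsilon \in H_0^1(B)$ solves $-\div(a(\cdot/\varepsilon + y)\nabla v_1^\varepsilon) = \div(H)$ and $v_2^\varepsilon := v^\varepsilon - v_1^\varepsilon$ solves the homogeneous equation. Applying Theorem~\ref{lem16} to $v_2^\varepsilon - \fint_B v_2^\varepsilon$ together with Poincar\'e--Wirtinger, then using the energy bound $\|\nabla v_1^\varepsilon\|_{L^2(B)} \leq C\|H\|_{L^2(B)}$, yields
\[
|\nabla v_2^\varepsilon(x_0)| \leq C\left(\fint_{B(0,2R)} |\nabla v^\varepsilon|^2\right)^{1/2} + C\|H\|_{L^\infty(B(0,2R))},
\]
which is of the desired form. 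Next, using the identity $\div(H) = \div(H - H(x_0))$ (since $H(x_0)$ is constant), choose a cutoff $\phi \in C_c^\infty(B(x_0, 3R/4))$ with $\phi \equiv 1$ on $B(x_0, R/2)$, and further split $v_1^\varepsilon = u_1^\varepsilon + u_2^\varepsilon$ in $H_0^1(B)$ with sources $\div(\phi(H-H(x_0)))$ and $\div((1-\phi)(H-H(x_0)))$ respectively. On $B(x_0, R/2)$ the source of $u_2^\varepsilon$ vanishes, so $u_2^\varepsilon$ solves the homogeneous equation there; Theorem~\ref{lem16} combined with the energy estimate and Poincar\'e then gives $|\nabla u_2^\varepsilon(x_0)| \leq C\|H\|_{L^\infty}$, which is absorbed into $C\ln(1+R/\varepsilon)\|H\|_{L^\infty}$.

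For $u_1^\varepsilon$, let $G^\varepsilon_B$ denote the Dirichlet Green function of the operator on $B$; integration by parts (using $G^\varepsilon_B|_{\partial B} = 0$) yields the representation
\[
\nabla u_1^\varepsilon(x_0) = -\int_{B(x_0, 3R/4)} \nabla_x \nabla_y G^\varepsilon_B(x_0, y)\cdot\phi(y)\bigl(H(y) - H(x_0)\bigr)\,dy.
\]
Theorem~\ref{th:estimations_G}, whose Assertion~\ref{item:5} requires also $a^T$ to satisfy \ref{H1}--\ref{H6}, applied with $\Omega_1 = B(x_0, 3R/4) \subset\subset B$, gives $|\nabla_x \nabla_y G^\varepsilon_B(x_0,y)| \leq C/|x_0-y|^d$ throughout the integration domain. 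The integral is then split at scale $\varepsilon$: on $\{|y-x_0|\leq\varepsilon\}$ the H\"older bound $|H(y)-H(x_0)| \leq [H]_{C^{0,\beta}}|y-x_0|^\beta$ and a polar integration of $r^{\beta-d}$ produce $C\varepsilon^\beta [H]_{C^{0,\beta}}$, while on $\{\varepsilon < |y-x_0| \leq 3R/4\}$ the crude bound $|H - H(x_0)| \leq 2\|H\|_{L^\infty}$ and the logarithmic radial integral of $r^{-1}$ produce $C\ln(1+R/\varepsilon)\|H\|_{L^\infty}$. The main technical point is justifying the differentiation under the integral sign at $x_0$, which is legitimate precisely because the near-diagonal singularity $|x_0-y|^{-d}$ is softened by the H\"older factor $|y-x_0|^\beta$ contributed by the subtraction of $H(x_0)$; summing the three contributions $\nabla v_2^\varepsilon(x_0), \nabla u_1^\varepsilon(x_0), \nabla u_2^\varepsilon(x_0)$ and taking the supremum over $x_0 \in B(0,R)$ yields \eqref{eq:estimation_lipschitz_non_homogene}.
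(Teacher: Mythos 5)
Your proof is correct, but the decomposition is genuinely different from the paper's. The paper fixes one cutoff $\phi$ equal to $1$ on $B(0,5R/4)$ and represents $\phi v^\varepsilon$ via the Green function $G^\varepsilon$ of $-\div(a(\cdot/\varepsilon+y)\nabla\cdot)$ on the \emph{whole} ball $B(0,2R)$, producing three terms: two ``cutoff'' integrals carried on the annulus where $\nabla\phi$ lives (bounded directly with \eqref{eq:green_1}, \eqref{eq:green_3} and the Poincar\'e--Wirtinger inequality), and a source term handled by the cancellation $\int\nabla_z(G^\varepsilon\phi)=0$ plus the $\varepsilon$-scale split. You instead split $v^\varepsilon$ on the sub-ball $B(x_0,R)$ into a zero-boundary particular solution and a homogeneous remainder, dispatch the homogeneous remainder and the far-field part of the source with Theorem~\ref{lem16} (plus energy and Poincar\'e--Wirtinger), and reserve the Green function representation — now on the sub-ball $B(x_0,R)$ — for only the compactly supported, $H(x_0)$-subtracted near-field. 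The key technical ingredients are the same: Theorem~\ref{th:estimations_G}'s mixed-derivative bound \eqref{eq:green_3} (which is exactly where the hypothesis on $a^T$ enters), the subtraction of a constant from $H$ to soften the $|x-y|^{-d}$ singularity, and the split at scale $\varepsilon$ producing $\varepsilon^\beta[H]_{C^{0,\beta}}$ and $\ln(1+R/\varepsilon)\|H\|_{L^\infty}$. Your route trades the two explicit annulus integrals of the paper for two extra invocations of Theorem~\ref{lem16}, which is arguably a cleaner use of the machinery already built. One point worth making explicit (as it is tacit in both your write-up and the paper's): the constant in \eqref{eq:green_3} must be uniform over the family of balls $\Omega=B(x_0,R)$, $\Omega_1=B(x_0,3R/4)$ as $x_0$ and $R$ vary; this follows from scale- and translation-invariance of Assumptions \ref{H1}--\ref{H6} together with the uniformity in $\varepsilon$ of Theorem~\ref{th:estimations_G} (rescaling $x\mapsto (x-x_0)/R$ sends $\varepsilon$ to $\varepsilon/R$). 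Finally, note that when $3R/4\le\varepsilon$ there is no ``outer'' annulus in your near-field integral; the single inner contribution is then bounded by $C(3R/4)^\beta[H]_{C^{0,\beta}}\le C\varepsilon^\beta[H]_{C^{0,\beta}}$, so the estimate survives this borderline case.
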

We recall here that $[\cdot]_{C^{0,\beta}(B(0,2R))}$ denotes the H\"older semi-norm on $B(0,2R)$ (see
\eqref{eq:holder_semi_norm}).

\medskip

Proposition~\ref{pr:2} is a generalization of \cite[Lemma 3.5]{KLSGreenNeumann}, in two ways. First, we replace,
here, the periodicity assumption by \ref{H1} through \ref{H6}. Second, in
\cite{KLSGreenNeumann}, Lemma 3.5 is stated only for the specific case where $v^\varepsilon= R^\varepsilon$ defined
by \eqref{eq:reste}, hence $H = H^\varepsilon$ defined by \eqref{eq:H-epsilon}. Due to these differences, we provide
below a complete proof, although the ideas are contained in \cite{KLSGreenNeumann}.

\begin{proof}
  We split the proof in several steps: first, introducing a cut-off function, we write $v^\varepsilon$ as an integral of $G^\varepsilon$, which is
  the Green function of the operator $-\div\left(a\left(\frac x \varepsilon+y\right)\nabla \cdot\right)$ with homogeneous
    Dirichlet boundary conditions on $B(0,2R)$. Then, we use this representation and Theorem~\ref{th:estimations_G} to prove \eqref{eq:estimation_lipschitz_non_homogene}.

\smallskip

\noindent\underline{Step 1: introduction of a cut-off function and use of the Green function.} We define $\phi\in
C^\infty_c(B(0,3R/2))$ such that
\begin{displaymath}
  0\leq\phi\leq 1, \quad \phi = 1 \text{ in } B(0,5R/4), \quad \|\nabla \phi\|_{L^\infty(B(0,2R))} \leq \frac C R, \quad \|D^2
  \phi\|_{L^\infty(B(0,2R))} \leq \frac C {R^2}.
\end{displaymath}
We clearly have $\|\nabla(\phi v^\varepsilon)\|_{L^\infty(B(0,R))} = \|\nabla
v^\varepsilon\|_{L^\infty(B(0,R))}.$ Moreover,
\begin{displaymath}
  -\div\left(a\left(\frac z \varepsilon + y\right) \nabla \left(\phi v^\varepsilon\right)\right) =
  -\div\left(v^\varepsilon a\left(\frac z \varepsilon + y\right) \nabla \phi \right) -  \left(a\left(\frac z \varepsilon
    + y\right)\nabla v^\varepsilon\right)\cdot \nabla \phi + \phi\div(H).
\end{displaymath}
Hence, multiplying by $G^\varepsilon(x,z)$ and integrating with respect to $z$ over $B(0,2R)$, 
\begin{eqnarray*}
  \phi(x)v^\varepsilon(x) &=& -\int_{B(0,2R)} G^\varepsilon(x,z)  \left[a\left(\frac z \varepsilon + y\right)\nabla
  v^\varepsilon(z)\right] \cdot \nabla\phi(z)dz \\ &&+ \int_{B(0,2R)} \nabla_z G^\varepsilon(x,z)\cdot
\left(v^\varepsilon(z)  a\left(\frac z \varepsilon + y\right)\nabla \phi(z)\right)dz  \\ &&- 
  \int_{B(0,2R)} \nabla_z\left(G^\varepsilon(x,z)\phi(z)\right)\cdot H(z)dz\\
&=:& v_1^\varepsilon(x) + v_2^\varepsilon(x) + v_3^\varepsilon(x).
\end{eqnarray*}

\medskip

\noindent\underline{Step 2: bound on $v_1^\varepsilon$.} Let $x\in B(0,R)$. Since $\nabla \phi$ vanishes in
$B(0,5R/4)$ and outside $B(0,3R/2)$, we have
\begin{displaymath}
  |\nabla v^\varepsilon_1(x)|\leq \int_{B(0,3R/2)\setminus B(0,5R/4)} |\nabla_x G^\varepsilon(x,z)|\, \left|a\left(\frac
      z \varepsilon + y\right) \right|\,
  |\nabla v^\varepsilon(z)|\, |\nabla \phi(z)| dz.
\end{displaymath}
Successively using $|\nabla \phi|\leq \frac C R$, estimate \eqref{eq:green_1}, and $B(0,3R/2)\setminus B(0,5R/4)
\subset B(0,2R)\setminus B(0,R),$ we deduce
\begin{equation}\label{eq:55}
  |\nabla v^\varepsilon_1(x)|
  \leq \frac C {R^d} \int_{B(0,2R)\setminus B(0,R)} |\nabla v^\varepsilon| \leq C\left(\fint_{B(0,2R)}|\nabla v^\varepsilon|^2\right)^{1/2}.
\end{equation}
\medskip

\noindent\underline{Step 3: bound on $v_2^\varepsilon$.} Similar arguments allow to prove that
\begin{multline}\label{eq:977}
  |\nabla v_2^\varepsilon(x)| \leq \int_{B(0,3R/2)\setminus B(0,5R/4)} |\nabla_x\nabla_zG^\varepsilon(x,z)|\,
  |v^\varepsilon(z)|\, \left|a\left(\frac z \varepsilon + y\right)\right|\, |\nabla \phi(z)|dz \\
  \leq \frac C R \left(\int_{B(0,3R/2)\setminus B(0,5R/4)}|\nabla_x\nabla_z G(x,z)|^2dz\right)^{1/2} \|v^\varepsilon\|_{L^2(B(0,2R))},
\end{multline}
the last inequality coming from the Cauchy-Schwarz inequality. We then apply \eqref{eq:green_3}, which implies
\begin{equation}\label{eq:58}
  \left(\int_{B(0,3R/2)\setminus B(0,5R/4)}|\nabla_x\nabla_z G(x,z)|^2dz\right)^{1/2} \leq \frac C {R^{d/2}}.
\end{equation}
We point out that adding a constant to $v^\varepsilon$ does not change \eqref{eq:second_membre_div}, hence
we may assume that $\int_{B(0,2R)} v^\varepsilon = 0$. So, using the Poincar\'e-Wirtinger inequality, we have
\begin{displaymath}
  \|v^\varepsilon\|_{L^2(B(0,2R))} \leq C\left(R^2\int_{B(0,2R)} |\nabla v^\varepsilon|^2\right)^{1/2} =C R^{1+d/2}
  \left(\fint _{B(0,2R)} |\nabla v^\varepsilon|^2\right)^{1/2},
\end{displaymath}
where $C$ does not depend on $R$. Inserting this inequality and \eqref{eq:58} into \eqref{eq:977}, we infer
\begin{equation}
  \label{eq:56}
  |\nabla v_2^\varepsilon(x)|\leq C \left(\fint_{B(0,2R)}|\nabla v^\varepsilon_2|^2\right)^{1/2}.
\end{equation}
\medskip
\noindent\underline{Step 4: bound on $v_3^\varepsilon$.} We fix here again $x\in B(0,R)$. Integrating by parts, we have
\begin{equation}\label{eq:57}
  \int_{B(0,2R)} \nabla_z \left(G^\varepsilon(x,z)\phi(z)\right)dz = 0, 
\end{equation}
hence
\begin{displaymath}
  v_3^\varepsilon(x) = \int_{B(0,2R)} \nabla_z\left(G^\varepsilon(x,z)\phi(z)\right)\cdot (H(z) - H(x))dz.
\end{displaymath}
We differentiate this equalilty with respect to $x$, and use~\eqref{eq:57} again, finding
\begin{displaymath}
  \nabla v_3^\varepsilon(x) = \int_{B(0,2R)} \nabla_z\left(\nabla_xG^\varepsilon(x,z)\phi(z)\right)\cdot(H(z) -
  H(x))dz - \underbrace{\int_{B(0,2R)} \nabla_z\left(G^\varepsilon(x,z)\phi(z)\right)\cdot \nabla_x
  H(x)dz}_{=0}.
\end{displaymath}
Thus,
\begin{multline*}
  |\nabla v_3^\varepsilon(x)|\leq  \int_{B(0,2R)} |\phi(z)|\, |\nabla_z\nabla_xG^\varepsilon(x,z)|\, |H(z) -
  H(x)|dz \\+ \int_{B(0,2R)} |\nabla \phi(z)|\, |\nabla_xG^\varepsilon(x,z)|\, |H(z) -
  H(x)|dz
\end{multline*}
Using that $\nabla \phi$ vanishes in
$B(0,5R/4)$ and outside $B(0,3R/2)$, that $|\nabla \phi|\leq C/R$, and \eqref{eq:green_1}, we have
\begin{multline*}
   \int_{B(0,2R)} |\nabla \phi(z)|\, |\nabla_xG^\varepsilon(x,z)|\, |H(z) -
  H(x)|dz \\ \leq \frac C R \|H\|_{L^\infty(B(0,2R))} \int_{B(0,3R/2)\setminus B(0,5R/4)} \frac 1 {|x-z|^{d-1}}dz \leq
  C \|H\|_{L^\infty(B(0,2R))}.
\end{multline*}
Moreover, using \eqref{eq:green_3} and the fact that $H$ is $\beta$-H\"older continuous, we also have,
\begin{multline*}
  \int_{B(0,2R)} |\phi(z)|\, |\nabla_z\nabla_xG^\varepsilon(x,z)|\, |H(z) - H(x)|dz \leq C [H]_{C^{0,\beta}(B(0,2R))} \int_{B(x,\varepsilon)} \frac{|x-z|^\beta}{|x-z|^d}dz \\
 + 2\|H\|_{L^\infty(B(0,2R))} \int_{B(0,2R)\setminus B(x,\varepsilon)}\frac{dz}{|x-z|^d}.  
\end{multline*}
The integral in the right-most term of the right-hand side is bounded as follows (we use here $|x|\leq R$):
\begin{displaymath}
   \int_{B(0,2R)\setminus B(x,\varepsilon)}\frac{dz}{|x-z|^d} \leq
  C\int_\varepsilon^{\max(3R,\varepsilon)} \frac{r^{d-1}dr}{r^d} =  C
  \ln\left(\frac{\max(3R,\varepsilon)}\varepsilon\right) \leq C\ln\left(1+\frac R \varepsilon\right).
\end{displaymath}
Hence,
\begin{equation}
  \label{eq:60}
  |\nabla v_3^\varepsilon(x)|\leq  C \varepsilon^\beta [H]_{C^{0,\beta}(B(0,2R))} + C\ln\left(1+R\varepsilon^{-1}\right)\|H\|_{L^\infty(B(0,2R))}.
\end{equation}
Collecting \eqref{eq:55}, \eqref{eq:56}, \eqref{eq:60}, we have proved \eqref{eq:estimation_lipschitz_non_homogene}.
\end{proof}

\begin{remarque}
  In Propoisition~\ref{pr:2}, we have assumed that both coefficients $a$ and $a^T$ satisfy Assumptions \ref{H1} through \ref{H6}. The result
  however still holds if only $a$ satisfies those assumptions. Indeed,  the assumption on $a^T$ is only used for the
  proof of \eqref{eq:58} and \eqref{eq:60}: in both cases, we have used the pointwise bound \eqref{eq:green_3} on $\nabla_x\nabla_yG$, but the only
  relevant bound for proceeding with the proof of Proposition~\ref{pr:2}  is an $L^2$ bound, which can alternately be obtained
  using \eqref{eq:green_1} and the Cacciopoli inequality (see \cite[Section 2.5.3]{theseMJ} for the details).
\end{remarque}

\subsection{Convergence rates for Green functions}

We now prove the following convergence result of $G^\varepsilon$ to the Green function $G^*$ of the operator
$-\div(a^*\nabla \cdot)$ with homogeneous Dirichlet conditions on $\Omega$. It is the extension, in our setting, of
\cite[Theorem 3.3]{KLSGreenNeumann}

\begin{theoreme}
  \label{th:cv_Green}
  Assume that the matrix-valued coefficients $a$ and $a^T$ satisfy Assumptions \ref{H1} through
  \ref{H6}, and \ref{H7}-\ref{H8} for some $\nu>0$. Let $\Omega$ be a domain of class $C^{2,1}$, and denote by $G^\varepsilon$ and
  $G^*$ the Green functions of the operators $-\div\left(a\left(\frac x \varepsilon\right)\nabla \cdot\right)$ and
  $-\div(a^*\nabla \cdot)$, respectively, with homogeneous Dirichlet boundary conditions on $\Omega$. Then there exists a
  constant $C>0$ depending only on $a$, $\Omega$ and $\nu$ such that
  \begin{equation}
    \label{eq:cv_Green}
    \forall x\neq y\in \Omega, \quad \left|G^\varepsilon(x,y) - G^*(x,y)\right|\leq C \frac{\varepsilon^\nu}{|x-y|^{d+\nu-2}}.
  \end{equation}
\end{theoreme}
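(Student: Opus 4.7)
The proof follows the approach of \cite[Theorem~3.3]{KLSGreenNeumann}, reducing the pointwise estimate on $G^\varepsilon-G^*$ to an integrated convergence estimate for the Dirichlet problem with smooth source, which is then obtained via the two-scale expansion and a duality argument.

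\textbf{Setup and reduction.} Fix $x_0,y_0\in\Omega$ and set $r:=|x_0-y_0|$. If $r\leq C_0\varepsilon$ for a sufficiently large absolute constant $C_0$, then \eqref{eq:cv_Green} follows directly from \eqref{eq:green_0}; thus we may assume $r\geq C_0\varepsilon$. Choose non-negative bumps $\varphi\in C_c^\infty(B(x_0,r/16))$ and $\psi\in C_c^\infty(B(y_0,r/16))$ with $\int\varphi=\int\psi=1$ and $\|D^k\varphi\|_{L^\infty}+\|D^k\psi\|_{L^\infty}\leq Cr^{-d-k}$ for $k=0,1,2$. Let $u^\varepsilon,u^*$ denote the Dirichlet solutions on $\Omega$ for $-\div(a(\cdot/\varepsilon)\nabla\cdot)$ and $-\div(a^*\nabla\cdot)$ with right-hand side $\psi$, so that $u^\varepsilon(x)=\int G^\varepsilon(x,y)\psi(y)dy$ and similarly for $u^*$. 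The Lipschitz bounds \eqref{eq:green_1}--\eqref{eq:green_3} on $G^\varepsilon$, together with their constant-coefficient analogues for $G^*$, allow one to replace $G^\varepsilon(x_0,y_0)-G^*(x_0,y_0)$ by $\iint(G^\varepsilon-G^*)\varphi\otimes\psi=\int\varphi(u^\varepsilon-u^*)$ modulo an error of the desired order $\varepsilon^\nu r^{-(d+\nu-2)}$. It thus suffices to show that $\bigl|\int\varphi(u^\varepsilon-u^*)\bigr|\leq C\varepsilon^\nu r^{-(d+\nu-2)}$.

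\textbf{Two-scale expansion and duality.} By Proposition~\ref{pr:equation-reste}, the remainder $R^\varepsilon:=u^\varepsilon-u^*-\varepsilon\sum_j w_{e_j}(\cdot/\varepsilon)\partial_j u^*$ solves $-\div(a(\cdot/\varepsilon)\nabla R^\varepsilon)=\div(H^\varepsilon)$ with $H^\varepsilon$ as in \eqref{eq:H-epsilon}. Let $\eta^\varepsilon\in H^1_0(\Omega)$ solve the adjoint Dirichlet problem $-\div(a^T(\cdot/\varepsilon)\nabla\eta^\varepsilon)=\varphi$. Integration by parts yields
$$\int\varphi(u^\varepsilon-u^*)=\varepsilon\sum_j\int\varphi\,w_{e_j}(\cdot/\varepsilon)\,\partial_j u^*+\int\nabla\eta^\varepsilon\cdot H^\varepsilon.$$
The first term is estimated directly on $\mathrm{supp}(\varphi)$: after normalizing $w_{e_j}$ so that $w_{e_j}(y_0/\varepsilon)=0$, Assumption~\ref{H7} (cf.~\eqref{eq:92}) gives $|\varepsilon w_{e_j}(\cdot/\varepsilon)|\leq C\varepsilon^\nu r^{1-\nu}$ on $\mathrm{supp}(\varphi)$, while the gradient bound on $\nabla_x G^*$ yields $|\partial_j u^*|\leq Cr^{-(d-1)}$ there. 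This produces a contribution of order $\varepsilon^\nu r^{-(d+\nu-2)}$, and the accompanying shift in $R^\varepsilon$ caused by the choice of normalization is tracked similarly.

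\textbf{Core estimate and main obstacle.} The remaining integral $\int\nabla\eta^\varepsilon\cdot H^\varepsilon$ is the principal technical step. After recentering the correctors $w_{e_k}$ and the potential $B$ at $y_0$ (both being defined only up to additive constants), Lemma~\ref{lm2.7} yields the pointwise bound $|H^\varepsilon(x)|\leq C\varepsilon^\nu|x-y_0|^{1-\nu}|D^2u^*(x)|$. Classical Schauder estimates for the constant-coefficient Dirichlet problem on a $C^{2,1}$ domain give $|D^2u^*(x)|\leq C\min(r^{-d},|x-y_0|^{-d})$, while Theorem~\ref{th:estimations_G} applied to $a^T$ (together with \eqref{eq:green_3}) furnishes $|\nabla\eta^\varepsilon(x)|\leq C\min(r^{-(d-1)},|x-x_0|^{-(d-1)})$ away from $x_0$. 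A dyadic decomposition of $\Omega$ into annular regions centered at $x_0$ and $y_0$, in the spirit of \cite{AvellanedaLin,KLSGreenNeumann}, then organizes the integral and yields the target bound $C\varepsilon^\nu r^{-(d+\nu-2)}$. The main obstacle is precisely this dyadic bookkeeping: the slow-growth factor $|x-y_0|^{1-\nu}$ coming from Assumptions~\ref{H7}--\ref{H8} must be balanced against the two singularities (near $x_0$ and $y_0$) and against the $C^{2,1}$ boundary contributions, which requires careful tracking of exponents in each annulus to yield the optimal rate $\varepsilon^\nu$.
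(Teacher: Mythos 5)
Your approach---mollify $G^\varepsilon-G^*$ near $x_0,y_0$, then test the remainder equation against an adjoint solution $\eta^\varepsilon$---is genuinely different from the paper's. The paper first proves a preliminary local $L^\infty$ lemma (Lemma~\ref{KLSlem3.2}) controlling $\|u^\varepsilon-u^*\|_{L^\infty(\Omega\cap B(x_0,R))}$ by a lower $L^{q_2}$ norm of the difference plus $\varepsilon^\nu$-weighted norms of $\nabla u^*$ and $D^2u^*$; this lemma is obtained by splitting $R^\varepsilon=R_1^\varepsilon+R_2^\varepsilon$ with $R_1^\varepsilon\in H^1_0$ and $R_2^\varepsilon$ carrying the boundary data. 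It is then used twice: once (Lemma~\ref{KLSth3.4}) to get an $L^\infty$-vs-$L^q$ bound for sources supported away from $x_0$, and once more---after a duality step giving $\|G^\varepsilon(x_0,\cdot)-G^*(x_0,\cdot)\|_{L^{q'}(\Omega\cap B(y_0,4R))}\leq C\varepsilon^\nu R^{2-d/q-\nu}$---to the Green functions themselves as solutions of the homogeneous adjoint equation near $y_0$, upgrading the $L^{q'}$ bound to a pointwise one.

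There are three genuine gaps in your version. First, the mollification step is wrong at the claimed scale: with bumps of radius $r/16$, the only available Lipschitz information is $|\nabla G^\varepsilon|,|\nabla G^*|\lesssim r^{1-d}$ separately (you do not have, a priori, a Lipschitz bound on the \emph{difference} $G^\varepsilon-G^*$), so the replacement error is of size $O\bigl(r\cdot r^{1-d}\bigr)=O(r^{2-d})$, which \emph{dominates} the target $\varepsilon^\nu r^{2-d-\nu}$ whenever $r\gg\varepsilon$. One would need bumps of radius $\sim\varepsilon$, which changes $\|D^k\varphi\|_\infty$ from $r^{-d-k}$ to $\varepsilon^{-d-k}$ and invalidates the subsequent size estimates as written. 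Second, your duality identity omits a boundary term: $R^\varepsilon$ is \emph{not} in $H^1_0(\Omega)$, since on $\partial\Omega$ it equals $-\varepsilon\sum_j w_{e_j}(\cdot/\varepsilon)\partial_j u^*$, so integrating by parts against $\eta^\varepsilon$ leaves a conormal term
\begin{displaymath}
  \int_{\partial\Omega}\bigl(a^T(\cdot/\varepsilon)\nabla\eta^\varepsilon\cdot n\bigr)\,R^\varepsilon\,d\sigma,
\end{displaymath}
which is exactly the boundary-layer contribution that the paper isolates via the decomposition $R^\varepsilon=R_1^\varepsilon+R_2^\varepsilon$ in \eqref{eq:72}--\eqref{eq:73} and controls with the maximum principle. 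Third, the "dyadic bookkeeping" you identify as the main obstacle is asserted but not carried out, and given the two gaps above it is not clear the exponents would close as claimed.
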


The proof of Theorem~\ref{th:cv_Green} replicates that of \cite[Theorem 3.3]{KLSGreenNeumann}, but we need to
everywhere keep track of the use of Assumptions \ref{H7}-\ref{H8} and check that these properties are sufficient to
proceed at each step of the arguments. 

\medskip

We prove the following lemma, which is a generalization of \cite[Lemma 3.2]{KLSGreenNeumann}:
\begin{lemme}
  \label{KLSlem3.2}
Assume that the matrix-valued coefficient $a$ satisfies \ref{H1} through
  \ref{H6}, and \ref{H7}-\ref{H8} for some $\nu>0$. Let $\Omega$ be a $C^{2,1}$ bounded domain, $x_0\in \overline\Omega$,
  $R>0$, $q_1>d$ and $q_2\in ]1,+\infty[$. Assume that $u^\varepsilon\in H^1(\Omega\cap B(x_0,4R))$ and $u^*\in
  W^{2,q_1}(\Omega\cap B(x_0,4R))$ satisfy
  \begin{displaymath}
\left\{
      \begin{aligned}
        &-\div\left(a\left(\frac x {\varepsilon}\right) \nabla u^\varepsilon\right) = -\div(a^*\nabla u^*) & \text{
          in } \Omega\cap B(x_0,4R), \\
        &u^\varepsilon = u^* & \text{ on } \left(\partial\Omega\right)\cap \overline B(x_0,4R).
      \end{aligned}
\right.  
  \end{displaymath}
Then, there exists a constant $C$ depending only on $a$, $\Omega$, $q_1$ and $q_2$ such that
\begin{multline}
  \label{eq:cv_prelim}
  \left\|u^\varepsilon - u^*\right\|_{L^\infty(\Omega\cap B(x_0,R))} \leq CR^{-\frac d{q_2}}\|u^\varepsilon -
  u^*\|_{L^{q_2}(\Omega\cap B(x_0,4R))} + C\varepsilon^\nu R^{1-\nu} \|\nabla u^*\|_{L^\infty(\Omega\cap
    B(x_0,4R))} \\+ C \varepsilon^\nu R^{2-\frac d {q_1} - \nu} \|D^2 u^*\|_{L^{q_1}(\Omega\cap B(x_0,4R))}
\end{multline}
\end{lemme}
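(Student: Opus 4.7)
The overall approach is to follow the strategy of Kenig--Lin--Shen for \cite[Lemma 3.2]{KLSGreenNeumann}, replacing uses of periodicity by our Assumptions~\ref{H1}--\ref{H8}, and to leverage the estimates already established in the homogeneous case. After reducing by the dilation $y=(x-x_0)/R$ to the unit scale $R=1$, with effective parameter $\tilde\varepsilon=\varepsilon/R$ and translated coefficient $\tilde a(z)=a(x_0/\varepsilon+z)$ (which satisfies the same assumptions uniformly, by the translation-invariance built into~\ref{H3}--\ref{H8}), I would introduce the two-scale remainder
$$
R^{\tilde\varepsilon}(y) := u^\varepsilon(x_0+Ry) - u^*(x_0+Ry) - \tilde\varepsilon\sum_{k}\tilde w_{e_k}\!\left(\frac{y}{\tilde\varepsilon}\right)\partial_k u^*(x_0+Ry).
$$
By Proposition~\ref{pr:equation-reste} it satisfies $-\operatorname{div}\left(\tilde a(\cdot/\tilde\varepsilon)\nabla R^{\tilde\varepsilon}\right)=\operatorname{div}(\tilde H^{\tilde\varepsilon})$ in $\tilde\Omega\cap B(0,4)$, and on $\partial\tilde\Omega\cap\overline{B(0,4)}$, where $u^\varepsilon=u^*$, it reduces to the pure corrector contribution $-\tilde\varepsilon\sum_k\tilde w_{e_k}(\cdot/\tilde\varepsilon)\partial_k u^*$.

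Next I would decompose $R^{\tilde\varepsilon}=v_1+v_2$, where $v_1$ solves the inhomogeneous equation in $\tilde\Omega\cap B(0,4)$ with zero Dirichlet data, and $v_2$ solves the homogeneous equation with boundary values $R^{\tilde\varepsilon}$ on $\partial(\tilde\Omega\cap B(0,4))$. For $v_1$, the standard Stampacchia $L^\infty$--$L^{q_1}$ estimate (valid since $q_1>d$) together with Lemma~\ref{lm2.7} gives
$$
\|v_1\|_{L^\infty}\leq C\|\tilde H^{\tilde\varepsilon}\|_{L^{q_1}}\leq C\tilde\varepsilon^{\nu}\|D^2\tilde v^*\|_{L^{q_1}},
$$
with $\tilde v^*(y):=u^*(x_0+Ry)$. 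For $v_2$, Theorem~\ref{th1} provides the local boundary Hölder estimate: $\|v_2\|_{L^\infty(\tilde\Omega\cap B(0,1))}$ is controlled by the Hölder norm of the Dirichlet data on $\partial\tilde\Omega\cap\overline{B(0,2)}$ together with an interior $L^{q_2}$-type norm of $v_2$ on $\tilde\Omega\cap B(0,2)$. The boundary Hölder norm is bounded by $C\tilde\varepsilon^{\nu}\|\nabla\tilde v^*\|_{L^\infty}$ via Assumption~\ref{H7} combined with the uniform $C^{0,\alpha}$ regularity of $\nabla \tilde w_{e_k}$ from Lemma~\ref{lm:pr2.3}; the interior $L^{q_2}$ norm is dominated by $\|u^\varepsilon-u^*\|_{L^{q_2}}$ up to the corrector and $v_1$ contributions already controlled. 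Undoing the scaling then yields the three factors $R^{-d/q_2}$, $R^{1-\nu}$, $R^{2-d/q_1-\nu}$ featured in~\eqref{eq:cv_prelim}.

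The step I expect to be most delicate is the treatment of $v_2$: Theorem~\ref{th1} requires control of the Hölder (not merely sup) norm of the Dirichlet data, so the bound on $\tilde\varepsilon\,\tilde w_{e_k}(\cdot/\tilde\varepsilon)\partial_k\tilde v^*$ must blend the large-scale sublinearity of Assumption~\ref{H7} with the small-scale $C^{0,\alpha}$ regularity of $\nabla\tilde w_{e_k}$ from Lemma~\ref{lm:pr2.3}, exactly in the spirit of the proof of~\eqref{eq:estimation-H-Holder}. A subsidiary technicality is that Theorem~\ref{th1} is phrased with an $L^2$ interior norm, so passing to $L^{q_2}$ requires a short De~Giorgi--Nash--Moser step applied to $v_2$, uniform in $\tilde\varepsilon$ thanks to our assumptions and to Proposition~\ref{prop:uniform-H-cv}; at no point does periodicity enter the argument, only uniform $H$-convergence and the quantitative sublinearity of $\tilde w_{e_k}$ and $\tilde B$.
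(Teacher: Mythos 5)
Your overall plan (introduce the two-scale remainder $R^\varepsilon$, split it into a part $R_1^\varepsilon$ solving the inhomogeneous equation with zero Dirichlet data and a part $R_2^\varepsilon$ solving the homogeneous equation with boundary data equal to $-\varepsilon\sum_k w_{e_k}(\cdot/\varepsilon)\partial_k u^*$, scale to the unit ball, and then recombine) is exactly the paper's strategy. Your treatment of $v_1$ is a legitimate variant: you invoke the Stampacchia/De~Giorgi $L^\infty$--$L^{q_1}$ bound (valid since $q_1>d$) directly on the divergence-form right-hand side, whereas the paper writes $R_1^\varepsilon$ as $-\int \nabla_y G^\varepsilon(x,y)\cdot H^\varepsilon(y)\,dy$ and uses the Gr\"uter--Widman $L^{d/(d-1),\infty}$ bound on $\nabla_y G^\varepsilon$. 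Both give the same factor $\varepsilon^\nu R^{2-d/q_1-\nu}\|D^2u^*\|_{L^{q_1}}$, and your route is arguably more elementary.

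The treatment of $v_2$, however, has a real gap. You propose to apply Theorem~\ref{th1}, which yields a $C^{0,\beta}$ estimate in terms of the $C^{0,\beta}$ norm of the Dirichlet data. As you yourself anticipate, controlling the H\"older seminorm of $\varepsilon w_{e_k}(\cdot/\varepsilon)\partial_k u^*$ ``in the spirit of \eqref{eq:estimation-H-Holder}'' produces a factor $\varepsilon^{\nu-\beta}$ (see the estimate \eqref{eq:9} in Lemma~\ref{lm2.7}), not $\varepsilon^\nu$: after undoing the scaling, the middle term of \eqref{eq:cv_prelim} would come out as $\varepsilon^{\nu-\beta}R^{1-\nu+\beta}\|\nabla u^*\|_{L^\infty}$, which is strictly worse than $\varepsilon^\nu R^{1-\nu}\|\nabla u^*\|_{L^\infty}$ for every $\beta>0$ because $\varepsilon<R$. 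Letting $\beta\to 0$ does not fix this without control of the blow-up of the constant in Theorem~\ref{th1}. The paper avoids the H\"older norm entirely: since only $\|v_2\|_{L^\infty}$ is needed, it applies a local boundary De~Giorgi--Nash $L^\infty$ estimate for the homogeneous equation, which bounds $\|R_2^\varepsilon\|_{L^\infty(\Omega\cap B(0,R))}$ by the \emph{sup} of the boundary data on $\partial\Omega\cap\partial\widetilde\Omega$ (controlled by $\varepsilon^\nu R^{1-\nu}\|\nabla u^*\|_{L^\infty}$ via Assumption~\ref{H7} alone) plus $R^{-d/q_2}\|R_2^\varepsilon\|_{L^{q_2}(\widetilde\Omega)}$. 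Replacing your invocation of Theorem~\ref{th1} by such a boundary maximum-principle/De~Giorgi--Nash $L^\infty$ estimate is the missing ingredient; once that is done, your argument closes and reproduces \eqref{eq:cv_prelim} with the correct exponents.
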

\begin{proof}
  We follow the proof of \cite[Lemma 3.2]{KLSGreenNeumann}, adapting it when necessary. First, since the problem is
  translation invariant, we may assume that $x_0=0$. Then, we define a smooth open set $\widetilde \Omega$ such that
  \begin{displaymath}
    \Omega\cap B(0,2R)\subset \widetilde\Omega\subset \Omega\cap B(0,4R).
  \end{displaymath}
We define the remainder $R^\varepsilon$ by \eqref{eq:reste}. We know that it satisfies \eqref{eq:equation-reste},
with $H^\varepsilon$ defined by \eqref{eq:H-epsilon}. Next, we split $R^\varepsilon$ into $R^\varepsilon =
R_1^\varepsilon + R_2^\varepsilon$, where $R_1^\varepsilon$ is defined as the unique solution of
  \begin{equation}\label{eq:72}
\left\{
      \begin{aligned}
        &-\div\left(a\left(\frac x {\varepsilon}\right) \nabla R_1^\varepsilon\right) = -\div(H^\varepsilon) & \text{
          in } \widetilde \Omega, \\
        &R_1^\varepsilon = 0 & \text{ on } \partial\widetilde\Omega.
      \end{aligned}
\right.  
  \end{equation}
Hence, $R_2^\varepsilon$ satisfies
  \begin{equation}\label{eq:73}
\left\{
      \begin{aligned}
        &-\div\left(a\left(\frac x {\varepsilon}\right) \nabla R_2^\varepsilon\right) = 0 & \text{
          in } \widetilde \Omega, \\
        &R_2^\varepsilon(x) = \varepsilon\sum_{j=1}^d w_{e_j}\left(\frac x \varepsilon\right)\partial_j u^*(x) &
        \text{ on } \partial\widetilde\Omega \cap \partial\Omega.
      \end{aligned}
\right.  
  \end{equation}
We use a scaling argument, defining $\overline R_2^\varepsilon(x) = R_2^\varepsilon(x/R)$, $\overline a(x) =
a(x/R)$, $\overline w_{e_j}(x) = w_{e_j}(x/R)$, and $\overline u^*(x) = u^*(x/R)$. Writing down the equation
satisfied  by $\overline R_2^\varepsilon$, we are thus in the case $R=1$ and we may apply De Giorgi-Nash estimate. Scaling
back to the original unknown $R_2^\varepsilon$, this implies 
\begin{displaymath}
  \left\|R_2^\varepsilon\right\|_{L^\infty(\Omega\cap B(0,R))} \leq C \left\|\varepsilon\sum_{j=1}^d
    w_{e_j}\left(\frac x \varepsilon\right)\partial_j u^*(x) \right\|_{L^\infty(\widetilde\Omega)} + \frac C
  {R^{d/q_2}} \|R_2^\varepsilon\|_{L^{q_2}(\widetilde\Omega)}.
\end{displaymath}
Using Assumption~\ref{H7} and the triangle inequality, this implies
\begin{equation}
  \label{eq:62}
  \left\|R_2^\varepsilon\right\|_{L^\infty(\Omega\cap B(0,R))} \leq C\varepsilon^\nu R^{1-\nu} \|\nabla
  u^*\|_{L^\infty(\widetilde \Omega)} +  \frac C
  {R^{d/q_2}} \|R^\varepsilon\|_{L^{q_2}(\widetilde\Omega)} + C \|R_1^\varepsilon\|_{L^\infty(\widetilde \Omega)}.
\end{equation}
Next, according to the definition \eqref{eq:reste} of $R^\varepsilon$, and using Assumption~\ref{H7} again, we have
\begin{multline}
  \label{eq:63}
  \|R^\varepsilon\|_{L^{q_2}(\widetilde\Omega)} \leq \|u^\varepsilon - u^*\|_{L^{q_2}(\widetilde \Omega)} +
  CR^{d/q_2}\left\|\varepsilon\sum_{j=1}^d
    w_{e_j}\left(\frac x \varepsilon\right)\partial_j u^*(x) \right\|_{L^\infty(\widetilde\Omega)} \\ \leq
  \|u^\varepsilon - u^*\|_{L^{q_2}(\widetilde \Omega)} + C \varepsilon^\nu R^{d/q_2+1-\nu}\|\nabla
  u^*\|_{L^\infty(\widetilde \Omega)}.
\end{multline}
Inserting \eqref{eq:63} into \eqref{eq:62}, we thus have
\begin{equation}
  \label{eq:64}
  \|R_2^\varepsilon\|_{L^\infty(\Omega\cap B(0,R))} \leq C\varepsilon^\nu R^{1-\nu} \|\nabla
  u^*\|_{L^\infty(\widetilde \Omega)} + \frac C {R^{d/q_2}}\|u^\varepsilon - u^*\|_{L^{q_2}(\widetilde \Omega)} + C \|R_1^\varepsilon\|_{L^\infty(\widetilde \Omega)}.
\end{equation}
Next, we bound $R_1^\varepsilon$. Denoting by $G^\varepsilon$ the Green function of the operator
$-\div\left(a\left(\frac x \varepsilon\right)\nabla \cdot\right)$ on $\widetilde \Omega$ with homogeneous Dirichlet
boundary conditions on $\widetilde \Omega$, we have, for any $x\in \widetilde\Omega$,
\begin{displaymath}
  R_1^\varepsilon(x) = -\int_{\widetilde\Omega} \nabla_y G^\varepsilon(x,y)\cdot H^\varepsilon(y)dy.
\end{displaymath}
Using the H\"older inequality and the estimate~\eqref{eq:estimation-H-L^p} of Lemma~\ref{lm2.7} (this is where we use
Assumption~\ref{H8}), we have
\begin{displaymath}
  |R_1^\varepsilon(x)|\leq C \varepsilon^\nu R^{1-\nu} \|\nabla_yG(x,\cdot)\|_{L^{q_1'}(\widetilde \Omega)} \|D^2 u^*\|_{L^{q_1}(\widetilde\Omega)}.
\end{displaymath}
Since $q_1>d$, we have $q_1'<\frac d {d-1}$, hence, using \cite[Equation (1.12)]{GruterWidman} and
Theorem~\ref{th:estimations_G}, 
\begin{multline}
  \label{eq:65}
  |R_1^\varepsilon(x)|\leq C \varepsilon^\nu R^{1-\nu} \left|\widetilde\Omega\right|^{\frac 1 {q_1'} -
    \frac{d-1}d}\left\|\nabla_y G(x,\cdot)\right\|_{L^{\frac d {d-1},\infty}(\widetilde\Omega)} \|D^2
  u^*\|_{L^{q_1}(\widetilde\Omega)} \\ \leq C \varepsilon^\nu R^{2-\frac d {q_1}-\nu} \|D^2
  u^*\|_{L^{q_1}(\widetilde\Omega)}.
\end{multline}
Collecting \eqref{eq:64} and \eqref{eq:65}, we have proved
\begin{multline}\label{eq:97}
  \left\|R^\varepsilon\right\|_{L^\infty(\Omega\cap B(x_0,R))} \leq CR^{-d/q_2}\|u^\varepsilon -
  u^*\|_{L^{q_2}(\Omega\cap B(x_0,4R))} + C\varepsilon^\nu R^{1-\nu} \|\nabla u^*\|_{L^\infty(\Omega\cap
    B(x_0,4R))} \\+ C \varepsilon^\nu R^{2-\frac d {q_1} - \nu} \|D^2 u^*\|_{L^{q_1}(\Omega\cap B(x_0,4R))}  .
\end{multline}
Next, we write
\begin{displaymath}
  u^\varepsilon(x) - u^*(x) = R^\varepsilon(x) + \varepsilon \sum_{j=1}^d w_{e_j}\left(\frac x
    \varepsilon\right) \partial_j u^*(x),
\end{displaymath}
which implies, using the triangle inequality and Assumption~\ref{H7}, 
\begin{displaymath}
  \left\| v^\varepsilon-v^*\right\|_{L^\infty(\Omega\cap B(x_0,R))}\leq
  \left\|R^\varepsilon\right\|_{L^\infty(\Omega\cap B(x_0,R))} + C \varepsilon\left(\frac R
    \varepsilon\right)^{1-\nu} \left\|\nabla u^*\right\|_{L^\infty(\Omega\cap B(x_0,R))}.
\end{displaymath}
Inserting \eqref{eq:97} into this estimate, we find \eqref{eq:cv_prelim}. 
\end{proof}

The following result is the generalization of \cite[Theorem 3.4]{KLSGreenNeumann} (with $q=\infty$ there) to the present setting. Here, the
proof is substantially different from \cite[Lemma 3.2]{KLSGreenNeumann}.

\begin{lemme}
  \label{KLSth3.4}
Under the assumptions of Theorem~\ref{th:cv_Green}, let $q>d$, $x_0\neq y_0\in \Omega$, $R = |x_0-y_0|/16$. Assume
that $f\in C^\infty_c(\Omega\cap B(y_0,4R))$, and that $u^\varepsilon$ and $u^*$ are solutions to 
  \begin{displaymath}
\left\{
      \begin{aligned}
        &-\div\left(a\left(\frac x {\varepsilon}\right) \nabla u^\varepsilon\right) = -\div(a^*\nabla u^*) =f & \text{
          in } \Omega, \\
        &u^\varepsilon = u^*= 0 & \text{ on } \partial\Omega.
      \end{aligned}
\right.  
  \end{displaymath}
Then,
\begin{equation}
  \label{eq:67}
  \left\|u^\varepsilon - u^* \right\|_{L^\infty(\Omega\cap B(x_0,R))} \leq C R^{2-\frac d q -\nu} \varepsilon^\nu
  \|f\|_{L^q(\Omega)}, 
\end{equation}
where $C$ depends only on the coefficient $a$, $q$ and $\Omega$.
\end{lemme}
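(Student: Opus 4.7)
The plan is to derive the bound as a direct consequence of Lemma~\ref{KLSlem3.2}. Because $R = |x_0 - y_0|/16$, the balls $B(x_0, 4R)$ and $B(y_0, 4R)$ are disjoint (for $x \in B(x_0, 4R)$ and $y \in B(y_0, 4R)$ one has $|x-y| \geq 16R - 4R - 4R = 8R$), so $f \equiv 0$ on $\Omega \cap B(x_0, 4R)$. Hence $u^\varepsilon$ and $u^*$ satisfy the respective homogeneous equations on this region with identical zero Dirichlet data on $\partial\Omega \cap \overline{B(x_0, 4R)}$, placing us exactly in the hypotheses of Lemma~\ref{KLSlem3.2}. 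Taking $q_1 = q$ and $q_2 = 2$ yields
\[
\|u^\varepsilon - u^*\|_{L^\infty(\Omega \cap B(x_0, R))} \leq CR^{-d/2}\|u^\varepsilon - u^*\|_{L^2(\Omega \cap B(x_0, 4R))} + C\varepsilon^\nu R^{1-\nu}\|\nabla u^*\|_{L^\infty} + C\varepsilon^\nu R^{2-d/q-\nu}\|D^2 u^*\|_{L^q},
\]
where the last two norms are taken over $\Omega \cap B(x_0, 4R)$.

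The terms involving $u^*$ are handled via the representation $u^*(x) = \int_\Omega G^*(x,y) f(y)\,dy$ together with the classical constant-coefficient off-diagonal estimates $|\nabla_x G^*(x,y)| \leq C|x-y|^{1-d}$ and $|D^2_x G^*(x,y)| \leq C|x-y|^{-d}$. Combined with $|x-y| \geq 8R$ on the relevant region and H\"older's inequality, they give
\[
\|\nabla u^*\|_{L^\infty(B(x_0, 4R))} \leq CR^{1-d/q}\|f\|_{L^q}, \qquad \|D^2 u^*\|_{L^q(B(x_0, 4R))} \leq C\|f\|_{L^q},
\]
so the second and third terms each contribute $C\varepsilon^\nu R^{2-d/q-\nu}\|f\|_{L^q}$, as claimed.

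The delicate point is the first term. Applying the pointwise bound \eqref{eq:green_0} to both $G^\varepsilon$ and $G^*$ and using H\"older once more yields only the crude estimate $\|u^\varepsilon - u^*\|_{L^\infty(B(x_0, 4R))} \leq CR^{2-d/q}\|f\|_{L^q}$, so the first term is majorized by $CR^{2-d/q}\|f\|_{L^q}$, short by the critical factor $(\varepsilon/R)^\nu$ (and trivially sufficient only in the regime $\varepsilon \geq R$). To close this gap, my plan is a dyadic bootstrap: introduce the scale-invariant quantity $\Psi(\rho) := \rho^{d/q-2}\|u^\varepsilon - u^*\|_{L^\infty(\Omega \cap B(x_0, \rho))}/\|f\|_{L^q}$ and iterate Lemma~\ref{KLSlem3.2} at the dyadic scales $\rho_k = 4^{-k}\, \mathrm{diam}(\Omega)$. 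Each iteration contributes a clean $C(\varepsilon/\rho)^\nu$ term from the $u^*$-estimates above, while the first term produces $\Psi$ at the next larger scale; a Poincar\'e--Wirtinger hole-filling step, where one subtracts from $u^\varepsilon - u^*$ its $\varepsilon$-corrected two-scale mean (whose oscillation is controlled through the strict sublinearity of $w_p$ in Assumption~\ref{H7}), is expected to produce a contraction factor $\theta \in (0,1)$. The resulting recurrence $\Psi(\rho) \leq \theta\,\Psi(4\rho) + C(\varepsilon/\rho)^\nu$ then sums geometrically to give $\Psi(R) \leq C(\varepsilon/R)^\nu$, which is precisely \eqref{eq:67}. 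Rigorously extracting the contraction factor $\theta < 1$ is the main technical hurdle; an alternative route uses the symmetry $G^\varepsilon(x,y) = G^\varepsilon_T(y,x)$ together with the assumed hypotheses on $a^T$ to transfer the first-term control to a neighborhood of $\mathrm{supp}(f)$, where the smoothness of $f$ and the Lipschitz estimate of Theorem~\ref{lem16} provide the required slack.
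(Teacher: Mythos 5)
Your setup is correct and matches the paper through the first term of Lemma~\ref{KLSlem3.2}: the disjointness observation, the choice $q_1=q$, and the Green-function estimates giving $\|\nabla u^*\|_{L^\infty(\Omega\cap B(x_0,4R))}\leq CR^{1-d/q}\|f\|_{L^q}$ and $\|D^2 u^*\|_{L^q(\Omega\cap B(x_0,4R))}\leq C\|f\|_{L^q}$ are exactly as in the paper (with $q_2=2$ rather than $q_2<2$, which is an innocuous change). You also correctly identify the crux: the first term of \eqref{eq:cv_prelim}, i.e.\ $R^{-d/q_2}\|u^\varepsilon-u^*\|_{L^{q_2}(\Omega\cap B(x_0,4R))}$, which the crude off-diagonal bound \eqref{eq:green_0} controls only by $CR^{2-d/q}\|f\|_{L^q}$ without the needed factor $(\varepsilon/R)^\nu$.

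The gap is that you never supply that factor. Your dyadic scheme is not a proof: you posit $\Psi(\rho)\leq\theta\Psi(4\rho)+C(\varepsilon/\rho)^\nu$ with $\theta\in(0,1)$ but acknowledge you cannot exhibit $\theta<1$, and indeed Lemma~\ref{KLSlem3.2} only controls the $L^\infty$-norm on $B(x_0,R)$ by the $L^{q_2}$-average on $B(x_0,4R)$ with a constant of order one -- there is no built-in smallness. Moreover, even granting such a recurrence, iterating up to $4^k R\sim\operatorname{diam}\Omega$ leaves a term $\theta^k\Psi(4^kR)$ of size $R^{\log_4(1/\theta)}$ which is not $O((\varepsilon/R)^\nu)$ when $\varepsilon\ll R$, so the scheme would not close as you state it. The alternative ``transfer via $G^\varepsilon(x,y)=G^\varepsilon_T(y,x)$'' is likewise only a suggestion: it changes which argument of the Green function is near $\operatorname{supp} f$, but by itself does not produce a comparison between $G^\varepsilon$ and $G^*$ carrying an $\varepsilon^\nu$.

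The paper's proof of \eqref{eq:67} obtains the missing factor directly, not by iteration. It writes $u^\varepsilon-u^*=R^\varepsilon+\varepsilon\sum_j w_{e_j}(\cdot/\varepsilon)\partial_j u^*$, splits $R^\varepsilon=R_1^\varepsilon+R_2^\varepsilon$ as in \eqref{eq:72}--\eqref{eq:73} with $\widetilde\Omega=\Omega$, and then: (i) bounds $\|\nabla R_1^\varepsilon\|_{L^2(\Omega)}\leq C\|H^\varepsilon\|_{L^2(\Omega)}$ and estimates $\|H^\varepsilon\|_{L^2(\Omega)}\leq C\varepsilon^\nu R^{d/2-d/q+1-\nu}\|f\|_{L^q}$ by splitting near/far from $y_0$, using Assumptions~\ref{H7}--\ref{H8} and the pointwise decay of $D^2u^*$ away from $\operatorname{supp} f$; then Sobolev embedding and H\"older give $\|R_1^\varepsilon\|_{L^{q_2}(\Omega\cap B(x_0,4R))}\leq C\varepsilon^\nu R^{2+d/q_2-d/q-\nu}\|f\|_{L^q}$; (ii) bounds $\|R_2^\varepsilon\|_{L^\infty(\Omega)}$ by the maximum principle, Assumption~\ref{H7}, and the pointwise bound \eqref{eq:79} on $\nabla u^*$, yielding $C\varepsilon^\nu R^{2-\nu-d/q}\|f\|_{L^q}$; (iii) bounds the corrector term $\varepsilon\sum_j w_{e_j}(\cdot/\varepsilon)\partial_j u^*$ on $B(x_0,4R)$ using~\ref{H7} and \eqref{eq:69}. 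Adding these produces \eqref{eq:83}, which plugged into \eqref{eq:68} gives \eqref{eq:67}. That one-shot argument, exploiting the corrector decomposition and the $\varepsilon^\nu$ supplied by \ref{H7}--\ref{H8}, is exactly what your proposal leaves open.
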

\begin{proof} Due to translation invariance, we may assume that $y_0 = 0$. 
  We apply Lemma~\ref{KLSlem3.2} with $q_1 = q$. Hence, $u^\varepsilon-u^*$ satisfies (\ref{eq:cv_prelim}), that is,
\begin{multline}\label{eq:68}
  \left\|u^\varepsilon - u^*\right\|_{L^\infty(\Omega\cap B(x_0,R))} \leq CR^{-\frac d{q_2}}\|u^\varepsilon -
  u^*\|_{L^{q_2}(\Omega\cap B(x_0,4R))} + C\varepsilon^\nu R^{1-\nu} \|\nabla u^*\|_{L^\infty(\Omega\cap
    B(x_0,4R))} \\+ C \varepsilon^\nu R^{2-\frac d {q} - \nu} \|D^2 u^*\|_{L^{q}(\Omega\cap B(x_0,4R))},
\end{multline}
for any $q_2>1$. We fix $q_2<2$, and we are going to estimate separately each term of the right-hand side of (\ref{eq:68}).

\noindent\underline{Step 1: bound on $\|\nabla u^*\|_{L^\infty}$.} Denoting by $G^*$ the Green function of the operator
$-\div\left(a^*\nabla\cdot\right)$ with homogeneous Dirichlet boundary conditions on $\Omega$, we have
  \begin{displaymath}
    \forall x\in \Omega, \quad \nabla u^*(x) = \int_\Omega \nabla_x G^*(x,y)f(y) dy = \int_{\Omega\cap B(0,4R)}
    \nabla _x G^*(x,y)f(y)dy. 
  \end{displaymath}
Hence, $|\nabla u^*(x)|\leq \|\nabla_x G^*(x,\cdot)\|_{L^{q'}(\Omega\cap B(0,4R))} \|f\|_{L^q(\Omega)}.$ Applying \cite[Theorem 3.3 (iv)]{GruterWidman}, we have
\begin{displaymath}
  \|\nabla_x G^*\left(x,\cdot\right)\|_{L^{q'}(\Omega\cap B(0,4R))} \leq C\left(\int_{B(0,4R)}\frac 1
    {|x-y|^{q'(d-1)}}dy\right)^{1/q'}  \leq
  \begin{cases}
    C R^{\frac d {q'} -d +1} & \text{ if } |x|< 8R, \\
    C R^{\frac d {q'}}|x|^{-d+1} &\text{ if } |x|\geq 8R.
  \end{cases}
\end{displaymath}
Hence,
\begin{equation}
  \label{eq:79}
  |\nabla u^*(x)|\leq C \|f\|_{L^q(\Omega)} \frac{R^{d-\frac d q}}{\max\left(R^{d-1},|x|^{d-1}\right)} \ .
\end{equation}
In particular, we have
\begin{equation}
  \label{eq:69}
  \|\nabla u^*\|_{L^\infty(\Omega)} \leq C R^{1-\frac d q}\|f\|_{L^q(\Omega)},
\end{equation}
where $C$ depends only on $a^*$ and $\Omega$. 

\noindent\underline{Step 2: bound on $\|D^2u^*\|_{L^q}.$} According to standard elliptic regularity results (see for
instance \cite[Lemma 9.17]{GT}), we have 
\begin{equation}
  \label{eq:70}
  \|D^2 u^*\|_{L^q(\Omega)} \leq C \|f\|_{L^q(\Omega)},
\end{equation}
where $C$ depends only on $a^*$ and $\Omega$. In addition, using the Green function representation again, 
\cite[Theorem 3.3 (vi)]{GruterWidman}, and an argument similar to the proof of \eqref{eq:79}, we have, if $|x|>8R$,
\begin{equation}
  \label{eq:71}
  |D^2 u^*(x)|\leq \left|\int_{\Omega} \nabla_x^2 G^*(x,y)f(y)dy\right| \leq C \int_{\Omega\cap B(0,4R)} \frac 1
  {|x-y|^d} |f(y)|dy \leq C \frac{R^{d-\frac d q}}{|x|^d}\|f\|_{L^q(\Omega)}.
\end{equation}
Pointing out that $|x|\geq R$ for all $x\in B(x_0,4R)$,
this implies, using the H\"older inequality,
\begin{multline}
  \label{eq:98}
  \left\|D^2 u^*\right\|_{L^q(\Omega\cap B(x_0,4R))} \leq C R^{d-d/q}\frac 1 {R^d} R^{d/q'} |B(x_0,4R)|^{d/q}
  \|f\|_{L^q(\Omega)} = CR^{d-2d/q} \|f\|_{L^q(\Omega)}\\ \leq C \left(\diam(\Omega)\right)^{d-2d/q} \|f\|_{L^q(\Omega)}.
\end{multline}

\noindent\underline{Step 3: bound on $\|u^\varepsilon - u^*\|_{L^q(\Omega\cap B(x_0,4R))}$.} As in the proof
of Lemma~\ref{KLSlem3.2}, we define $R^\varepsilon$ by \eqref{eq:reste}, and write $R^\varepsilon = R_1^\varepsilon
+ R_2^\varepsilon$, where $R_1^\varepsilon$ and $R_2^\varepsilon$ are solutions to \eqref{eq:72} and \eqref{eq:73},
respectively (with $\widetilde \Omega = \Omega$), and $H^\varepsilon$ is defined by \eqref{eq:H-epsilon}. 
Mutliplying the first line of~\eqref{eq:72} by $R_1^\varepsilon$ and integrating, we have
\begin{equation}\label{eq:77}
  \|\nabla R_1^\varepsilon\|_{L^2(\Omega)} \leq C \|H^\varepsilon\|_{L^2(\Omega)},
\end{equation}
where $C$ depends only on the ellipticity constant of the coefficient $a$. We claim that
\begin{equation}
  \label{eq:74}
  \|H^\varepsilon\|_{L^2(\Omega)} \leq C \varepsilon^\nu R^{\frac d 2 - \frac d q +1-\nu}\|f\|_{L^q(\Omega)}.
\end{equation}
We first deal with $\|H^\varepsilon\|_{L^2(\Omega\setminus B(0,8R))}$, then with 
$\|H^\varepsilon\|_{L^2(\Omega\cap B(0,8R))}$. Using Assumptions~\ref{H7} and \ref{H8}, we have, for all $x\in \Omega$, 
\begin{multline*}
  \left|H^\varepsilon(x)\right|\leq \varepsilon \|a\|_{L^\infty} \sup_{1\leq j\leq d} \left|w_{e_j}\left(\frac x
      \varepsilon\right)\right| \,|D^2 u^*(x)|+ \varepsilon \left|B\left(\frac x \varepsilon\right)\right| \, \left|D^2 u^*(x)\right| \leq C
  \varepsilon \left(\frac{|x|}\varepsilon\right)^{1-\nu} |D^2 u^*(x)| \\
\leq C \varepsilon^\nu |x|^{1-\nu} |D^2 u^*(x)|.
\end{multline*}
We then compute the $L^2$ norm of $H^\varepsilon$ on $\Omega\setminus B(0,8R)$, and use \eqref{eq:71}, together
with $|x|\geq R$:
\begin{multline}\label{eq:75}
  \left\|H^\varepsilon\right\|_{L^2(\Omega\setminus B(0,8R))} \leq C \varepsilon^\nu \left(\int_{\Omega\setminus
      B(0,8R)} |x|^{2(1-\nu)} |D^2 u^*(x)|^2dx \right)^{1/2} \\ \leq C \varepsilon^\nu \left(\int_{\Omega\setminus
      B(0,8R)} \frac 1 {|x|^{2d+2\nu-2}}R^{2d-\frac{2d}q} dx \right)^{1/2} \|f\|_{L^q(\Omega)} \leq C
  \varepsilon^\nu R^{1-\nu + \frac d 2 - \frac d q} \|f\|_{L^q(\Omega)}.
\end{multline}
In addition, successively using Lemma~\ref{lm2.7} (with $q=2$ there), the H\"older inequality, and \cite[Lemma 9.17]{GT},
\begin{equation}
  \label{eq:76}
   \|H^\varepsilon\|_{L^2(\Omega\cap B(0,8R))} \leq C \varepsilon^\nu R^{1-\nu}\|D^2 u^*\|_{L^2(\Omega\cap
     B(0,8R))} \leq C \varepsilon^\nu R^{\frac d 2 - \frac d q + 1-\nu}\|f\|_{L^q(\Omega)}.
\end{equation}
Collecting \eqref{eq:75} and \eqref{eq:76}, we infer \eqref{eq:74}. Inserting \eqref{eq:74} into \eqref{eq:77}, we
thus have $\|\nabla R_1^\varepsilon\|_{L^2(\Omega)} \leq C \varepsilon^\nu R^{\frac d 2 - \frac d q
  +1-\nu}\|f\|_{L^q(\Omega)}$. Hence, using the H\"older inequality again and Sobolev embeddings,
\begin{multline}
  \label{eq:78}
  \|R_1^\varepsilon\|_{L^{q_2}(\Omega\cap B(0,4R))} \leq C R^{\frac d {q_2} - \left(\frac d 2 -1\right)}
  \|R_1^\varepsilon\|_{L^{\frac{2d}{d-2}}(\Omega\cap B(0,4R))} \leq C R^{\frac d {q_2} +1-\frac d 2} \|\nabla
  R_1^\varepsilon\|_{L^2(\Omega)} \\ \leq C \varepsilon^\nu R^{2+\frac d {q_2} - \frac d q -\nu}\|f\|_{L^q(\Omega)}.
\end{multline}
We estimate $R_2^\varepsilon$. Using the maximum principle,
 we have
\begin{displaymath}
  \|R_2^\varepsilon\|_{L^\infty(\Omega)} \leq \left\|\varepsilon\sum_{j=1}^d w_{e_j}\left(\frac \cdot
      \varepsilon\right) \partial_j u^*\right\|_{L^\infty(\partial\Omega)}.
\end{displaymath}
This estimate, together with \eqref{eq:79} and Assumption~\ref{H7}, imply
\begin{displaymath}
  \|R_2^\varepsilon\|_{L^\infty(\Omega)}\leq C\varepsilon^\nu R^{d-\frac d q}\|f\|_{L^q(\Omega)} \sup_{x\in\partial
    \Omega} \left\{\frac{|x|^{1-\nu}}{\max\left(R^{d-1},|x|^{d-1}\right)} \right\} \leq C\varepsilon^\nu
  R^{d-\frac d q}\|f\|_{L^q(\Omega)} R^{2-d-\nu}.
\end{displaymath}
Thus,
\begin{equation}
  \label{eq:80}
  \|R_2^\varepsilon\|_{L^\infty(\Omega)} \leq C\varepsilon^\nu R^{2-\nu-\frac d q}\|f\|_{L^q(\Omega)} .
\end{equation}
We next bound $u^\varepsilon-u^*$. Applying the triangle inequality, 
\begin{multline}\label{eq:82}
  \|u^\varepsilon - u^*\|_{L^{q_2}(\Omega\cap B(x_0,4R))} \leq \left\|\varepsilon\sum_{j=1}^d w_{e_j}\left(\frac \cdot
      \varepsilon\right) \partial_j u^* \right\|_{L^{q_2}(\Omega\cap B(x_0,4R))} \\+ \left\| R_1^\varepsilon\right\|_{L^{q_2}(\Omega\cap B(x_0,4R))} + \left\|R_2^\varepsilon \right\|_{L^{q_2}(\Omega\cap B(x_0,4R))}.
\end{multline}
The first term is bounded using Assumption~\ref{H7} and \eqref{eq:69}:
\begin{equation}
  \label{eq:81}
  \left\|\varepsilon\sum_{j=1}^d w_{e_j}\left(\frac \cdot
      \varepsilon\right) \partial_j u^* \right\|_{L^{q_2}(\Omega\cap B(x_0,4R))} \leq C \varepsilon^\nu R^{2+\frac
    d {q_2} - \frac d q -\nu} \|f\|_{L^q(\Omega)}.
\end{equation}
Hence, inserting \eqref{eq:78}, \eqref{eq:80}, \eqref{eq:81} into \eqref{eq:82}, we infer
\begin{equation}
  \label{eq:83}
  \|u^\varepsilon - u^*\|_{L^{q_2}(\Omega\cap B(x_0,4R))} \leq C \varepsilon^\nu R^{2+\frac d {q_2} - \frac d q
    -\nu}  \|f\|_{L^q(\Omega)}.
\end{equation}
Finally, we collect \eqref{eq:68}, \eqref{eq:69}, \eqref{eq:98} and \eqref{eq:83}, which proves \eqref{eq:67}.
\end{proof}

We are now in position to prove Theorem~\ref{th:cv_Green}.
\begin{proof}[Proof of Theorem~\ref{th:cv_Green}]
  Let $q>d$, $x_0,y_0\in \Omega$, $R= |x_0-y_0|/16$ and $f\in C_c^\infty(\Omega\cap B(y_0,4R))$. We apply
  Lemma~\ref{KLSth3.4}. We have
  \begin{displaymath}
    u^\varepsilon(x) = \int_\Omega G^\varepsilon(x,y)f(y)dy \quad \text{and} \quad    u^*(x) = \int_\Omega G^*(x,y)f(y)dy.
  \end{displaymath}
Since $q>d,$ we may apply inequality \eqref{eq:67}. This gives
\begin{displaymath}
  \left|\int_\Omega \left(G^\varepsilon(x,y) - G^*(x,y)\right)f(y)dy\right|\leq C \varepsilon^\nu R^{2-\frac d q -
    \nu} \|f\|_{L^q(\Omega\cap B(y_0,R))}.
\end{displaymath}
Thus, a duality argument allows to prove
\begin{equation}
  \label{eq:84}
  \| G^\varepsilon(x,\cdot) - G^*(x,\cdot)\|_{L^{q'}(\Omega\cap B(y_0,4R))} \leq C\varepsilon^\nu R^{2-\frac d q -\nu}.
\end{equation}
Moreover, $G^\varepsilon$ and $G^*$ satisfy
\begin{displaymath}
  \left\{
    \begin{aligned}
      -\div_y\left( a^T \left(\frac y \varepsilon\right) \nabla_y G^\varepsilon(x_0,y)\right) = 0 && \text{ in } &
      \Omega\cap B(y_0,4R), \\
      -\div_y\left( \left(a^*\right)^T  \nabla_y G^*(x_0,y)\right) = 0 && \text{ in } &
      \Omega\cap B(y_0,4R), \\
      G^\varepsilon(x_0,\cdot) = G^*(x_0, \cdot) = 0 &&\text{ on }& \left(\partial\Omega\right)\cap \overline B(y_0,4R).
    \end{aligned}
\right.
\end{displaymath}
Hence, we may apply Lemma~\ref{KLSlem3.2} with $q_2 = q'$. This implies
\begin{multline*}
  |G^\varepsilon(x_0,y_0) - G^*(x_0,y_0)|\leq C R^{-\frac d {q'}}\|G^\varepsilon(x_0, \cdot) - G^*(x_0,\cdot)
  \|_{L^{q'}(\Omega\cap B(y_0,4R))} \\+ C \varepsilon^\nu R^{1-\nu} \|\nabla_y G^*(x_0,\cdot)\|_{L^\infty(\Omega\cap
    B(y_0,4R))} + C \varepsilon^\nu R^{2-\frac d q -\nu}\|D^2_y G^*(x_0,\cdot)\|_{L^q(\Omega\cap
    B(y_0,4R))}\ .
\end{multline*}
Applying once again \cite[Theorem 3.3]{GruterWidman} to $G^*$, we have $\|\nabla_y G^*(x_0,\cdot)\|_{L^\infty(\Omega\cap
    B(y_0,4R))} \leq C R^{1-d}$ and $\|D^2_y G^*(x_0,\cdot)\|_{L^q(\Omega\cap
    B(y_0,4R))} \leq C R^{-\frac d {q'}}$. Thus, using \eqref{eq:84}, we get
  \begin{displaymath}
    |G^\varepsilon(x_0,y_0) - G^*(x_0,y_0)|\leq C \varepsilon^\nu R^{2-d-\nu},
  \end{displaymath}
which concludes the proof, since $16R = |x_0-y_0|$. 
\end{proof}

Next, we prove the following result, which is a consequence of Theorem~\ref{th:cv_Green}, and is the generalization of \cite[Theorem 3.4]{KLSGreenNeumann} to the present setting.

\begin{corollaire}
  \label{cor:1.14}
  Assume that the matrix-valued coefficients $a$ and $a^T$ satisfy Assumptions \ref{H1} through \ref{H6}, and
  \ref{H7}-\ref{H8} for some $\nu>0$. Let $\Omega$ be a bounded $C^{2,1}$ domain and $q\in [1,+\infty[$. Then there exists a constant
  $C>0$ depending only on $a$, $\nu$, $\Omega$ and $q$, such that for any $f\in L^q(\Omega)$, if $u^\varepsilon$
  and $u^*$ are solution to \eqref{eq:equation} and \eqref{eq:homog_periodique}, respectively, then 
  \begin{displaymath}
    \|u^\varepsilon - u^*\|_{L^s(\Omega)} \leq C\varepsilon^\nu \|f\|_{L^q(\Omega)},
  \end{displaymath}
where 
\begin{displaymath}
  \frac 1 q - \frac{2-\nu}d = \frac 1 s, \quad\text{if}\quad \frac 1 q > \frac{2-\nu}d,
\end{displaymath}
and
\begin{displaymath}
  s= +\infty \quad\text{if} \quad \frac 1 q < \frac{2-\nu}d .
\end{displaymath}
\end{corollaire}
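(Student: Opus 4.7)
My plan is to use the Green function representation of $u^\varepsilon - u^*$ together with the pointwise estimate \eqref{eq:cv_Green} of Theorem~\ref{th:cv_Green}, and then invoke Hardy--Littlewood--Sobolev (in the subcritical regime) or H\"older's inequality (in the supercritical regime) to read off the correct target exponent $s$.

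More precisely, since $u^\varepsilon$ and $u^*$ solve, respectively, \eqref{eq:equation} and \eqref{eq:homog_periodique} with the same right-hand side $f$ and homogeneous Dirichlet conditions, the Green function representation yields
\begin{displaymath}
  u^\varepsilon(x) - u^*(x) = \int_\Omega \bigl(G^\varepsilon(x,y)-G^*(x,y)\bigr) f(y)\,dy, \qquad x\in\Omega.
\end{displaymath}
Applying Theorem~\ref{th:cv_Green} pointwise under the integral gives
\begin{displaymath}
  |u^\varepsilon(x) - u^*(x)| \leq C \varepsilon^\nu \int_\Omega \frac{|f(y)|}{|x-y|^{d+\nu-2}}\,dy.
\end{displaymath}
Since $\nu\in ]0,1[$, the exponent $\alpha := d+\nu-2$ satisfies $\alpha<d$, so the Riesz-type kernel $|x-y|^{-\alpha}$ belongs to $L^{d/\alpha,\infty}(\RR^d)$. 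The first case, $\tfrac{1}{q}>\tfrac{2-\nu}{d}$, is then handled by the Hardy--Littlewood--Sobolev inequality (or equivalently Young's inequality in Lorentz spaces): convolution with this kernel maps $L^q(\Omega)$ into $L^s(\Omega)$ whenever
\begin{displaymath}
\frac{1}{s} = \frac{1}{q} + \frac{\alpha}{d} - 1 = \frac{1}{q} - \frac{2-\nu}{d},
\end{displaymath}
which is precisely the exponent prescribed in the statement.

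The second case, $\tfrac{1}{q}<\tfrac{2-\nu}{d}$, is simpler: H\"older's inequality yields
\begin{displaymath}
  |u^\varepsilon(x) - u^*(x)| \leq C \varepsilon^\nu \|f\|_{L^q(\Omega)} \left(\int_\Omega \frac{dy}{|x-y|^{q'(d+\nu-2)}}\right)^{1/q'},
\end{displaymath}
and the exponent condition $\tfrac{1}{q}<\tfrac{2-\nu}{d}$ is equivalent to $q'(d+\nu-2)<d$, so the remaining integral is bounded uniformly in $x\in\Omega$ (using that $\Omega$ is bounded), giving the claimed $L^\infty$ estimate.

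I do not anticipate any genuine obstacle here: once \eqref{eq:cv_Green} is available, the corollary is a direct consequence of classical convolution bounds for Riesz-type kernels. The only minor points requiring a line of verification are the inequality $q'(d+\nu-2)<d\Leftrightarrow \tfrac{1}{q}<\tfrac{2-\nu}{d}$ in the supercritical case, and the observation that $d+\nu-2<d$ (which is automatic since $\nu<1<2$) to ensure that the HLS inequality applies in the subcritical range. The critical borderline $\tfrac{1}{q}=\tfrac{2-\nu}{d}$ is excluded from the statement and need not be addressed.
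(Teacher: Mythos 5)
Your proposal is correct and follows essentially the same route as the paper: represent $u^\varepsilon - u^*$ via the difference of Green functions, apply the pointwise bound from Theorem~\ref{th:cv_Green} to reduce to a Riesz-type convolution, and then invoke a Young--O'Neil/Hardy--Littlewood--Sobolev estimate in the subcritical case and H\"older's inequality in the supercritical case. The paper phrases the first case via the Young--O'Neil inequality in Lorentz spaces with the weak-$L^{d/(d-2+\nu)}$ kernel, which is the same tool you allude to; the remaining exponent arithmetic matches.
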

\begin{proof}
First, assume that $\frac 1 q > \frac{2-\nu}d$. Since the function
  $g$ defined by $g(x) = |x|^{2-d-\nu}$ satisfies $g\in L^{d/(d-2+\nu),\infty}$, and since $u^\varepsilon-u^*$ satisfies
  \begin{displaymath}
    u^\varepsilon(x) -u^*(x) = \int_\Omega \left(G^\varepsilon(x,y) - G^*(x,y)\right)f(y)dy,
  \end{displaymath}
we use Theorem~\ref{th:cv_Green} and a simple application of Young-O'Neil inequality \cite{oneil,yap}, which gives
\begin{displaymath}
  \left\|u^\varepsilon - u^*\right\|_{L^s(\Omega)} \leq C \varepsilon^\nu \bigl\|\,g*|f|\,\bigr\|_{L^{s,q}(\Omega)} \leq
  C\varepsilon^\nu \left\|g\right\|_{L^{d/(d-2+\nu),\infty}(\Omega)}\|f\|_{L^q(\Omega)},
\end{displaymath}
which proves the result. The case $s=+\infty$ is treated by a similar argument. 
\end{proof}

\section{Proof of the main results}
\label{sec:proof}

\subsection{Proof of Theorem~\ref{th:general}}

We give in this section the

\begin{proof}[Proof of Theorem~\ref{th:general}]
  We first prove \eqref{eq:cv_L2_general}. Applying Corollary~\ref{cor:1.14}, we clearly have
  \begin{displaymath}
    \|R^\varepsilon\|_{L^2(\Omega)} \leq C\varepsilon^\nu \|f\|_{L^2(\Omega)} + \varepsilon \left\|\sum_{j=1}^d
      w_{e_j}\left(\frac \cdot \varepsilon\right) \partial_j u^*\right\|_{L^2(\Omega)}.
  \end{displaymath}
Hence, using Assumption~\ref{H7} and the fact that $\|\nabla u^*\|_{L^2(\Omega)}\leq C \|f\|_{L^2(\Omega)}$, we
deduce \eqref{eq:cv_L2_general}.

\medskip

Next, we prove \eqref{eq:cv_H1_general}. For this purpose, we write again $R^\varepsilon = R_1^\varepsilon +
  R_2^\varepsilon$, where $R_1^\varepsilon$ and $R_2^\varepsilon$ are defined by \eqref{eq:72} and \eqref{eq:73},
  respectively (with $\widetilde \Omega = \Omega$.)
Multiplying the first line of \eqref{eq:72} by $R_1^\varepsilon$ and integrating by parts, we have $\|\nabla
R_1^\varepsilon\|_{L^2(\Omega)}\leq C\|H^\varepsilon\|_{L^2(\Omega)}$. Hence, applying Lemma~\ref{lm2.7}, we have
\eqref{eq:estimation-H-L^p}, which implies
\begin{equation}
  \label{eq:87}
  \|\nabla R_1^\varepsilon\|_{L^2(\Omega)} \leq C \varepsilon^\nu \|D^2 u^*\|_{L^2(\Omega)} \leq C \varepsilon^\nu \|f\|_{L^2(\Omega)}.
\end{equation}
The right-most estimate is a consequence of standard elliptic regularity estimates \cite[Lemma 9.17]{GT}. Next, we
apply the Caccioppoli inequality (actually, we need to cover $\Omega_1\subset\subset \Omega$ by balls $B_r(x_i)$ such
that $B_{2r}(x_i)\subset \Omega$ for each $i$, and apply the Caccioppoli inequality on each of theses balls), getting
\begin{displaymath}
  \|\nabla R_2^\varepsilon\|_{L^2(\Omega_1)} \leq C \left\|R_2^\varepsilon\right\|_{L^2(\Omega)} \leq C \|R_1^\varepsilon\|_{L^2(\Omega)} +
  C \|R^\varepsilon\|_{L^2(\Omega)} \leq C \|\nabla R_1^\varepsilon\|_{L^2(\Omega)} +
  C \|R^\varepsilon\|_{L^2(\Omega)},
\end{displaymath}
where we applied the Poincar\'e inequality to $R_1^\varepsilon$. The constant $C$ in the above inequality only depends
on $\Omega_1$, $\Omega$, and the coefficient $a$. Using \eqref{eq:cv_L2_general} and \eqref{eq:87}, we prove
\eqref{eq:cv_H1_general}.

\medskip

We now turn to the proof of \eqref{eq:cv_W1p_general}. We fix $\Omega_2$ such that $\Omega_1\subset\subset\Omega_2
\subset\subset \Omega$. We cover $\Omega_1$ by balls $B_r(x_j)$ such that $B_{2r}(x_j)\subset \Omega_2$ for all
$j$. Applying Proposition~\ref{pr:1} to $R^\varepsilon$, we have
\begin{displaymath}
  \|\nabla R^\varepsilon\|_{L^q(\Omega_1)} \leq C\|H^\varepsilon\|_{L^q(\Omega)}+C \|\nabla R^\varepsilon\|_{L^2(\Omega_2)}
\end{displaymath}
Hence, using \eqref{eq:cv_H1_general} and \eqref{eq:estimation-H-L^p} again, this implies
\begin{displaymath}
  \|\nabla R^\varepsilon\|_{L^q(\Omega_1)} \leq C \varepsilon^\nu \|D^2 u^*\|_{L^q(\Omega)} +
  C\varepsilon^\nu\|f\|_{L^2(\Omega)}
\end{displaymath}
Here again, elliptic regularity \cite[Lemma 9.17]{GT} implies $\|D^2u^*\|_{L^q(\Omega)} \leq \|f\|_{L^q(\Omega)}$,
and we conclude using the H\"older inequality. 

\medskip

Finally, we prove \eqref{eq:cv_lipschitz_general}. We assume $f\in C^{0,\beta}(\Omega)$. We first assume $\beta
\leq \alpha$. Here again, we define $\Omega_2$ such that $\Omega_1\subset\subset\Omega_2
\subset\subset \Omega$. We cover $\Omega_1$ by balls $B_r(x_j)$ such that $B_{2r}(x_j)\subset \Omega_2$ for all
$j$. Applying Proposition~\ref{pr:2} to $v^\varepsilon = R^\varepsilon$, we find
\begin{displaymath}
  \|\nabla R^\varepsilon\|_{L^\infty(\Omega_1)} \leq C \|\nabla R^\varepsilon\|_{L^2(\Omega_2)} +
  C\varepsilon^\beta [H^\varepsilon]_{C^{0,\beta}(\Omega)} + C \ln\left(2+\varepsilon^{-1}\right) \|H^\varepsilon\|_{L^\infty(\Omega)}.
\end{displaymath}
We apply \eqref{eq:cv_H1_general}, \eqref{eq:estimation-H-L^p} and \eqref{eq:estimation-H-Holder}, whence
\begin{displaymath}
  \|\nabla R^\varepsilon\|_{L^\infty(\Omega_1)} \leq C \varepsilon^\nu \|f\|_{L^2(\Omega_2)} +
  C\varepsilon^{\nu+\beta} [D^2u^*]_{C^{0,\beta}(\Omega)} + C \varepsilon^\nu\ln\left(2+\varepsilon^{-1}\right)
  \|D^2 u^*\|_{L^\infty(\Omega)}.
\end{displaymath}
Here again, we apply standard elliptic estimates \cite[Corollary 8.36]{GT}, thereby proving
\eqref{eq:cv_lipschitz_general}. 

We assume now that $\beta>\alpha$. In particular, we have $f\in C^{0,\alpha}(\Omega)$. Thus, we may apply the above
result with $\beta = \alpha$, and we have
\begin{displaymath}
   \|\nabla R^\varepsilon\|_{L^\infty(\Omega_1)} \leq C\varepsilon^\nu \ln\left(2+\varepsilon^{-1}\right)
   \|f\|_{C^{0,\alpha}(\Omega)} \leq C\varepsilon^\nu \ln\left(2+\varepsilon^{-1}\right)
   \|f\|_{C^{0,\beta}(\Omega)},
\end{displaymath}
which completes the proof.
\end{proof}

\subsection{Application to local perturbations of periodic problems: proof of Theorem~\ref{th:defaut}}

We prove here that the setting defined by \eqref{eq:aper+tildea}, \eqref{eq:hyp1} is covered by
Theorem~\ref{th:general} with $\nu = \nu_r$ defined by \eqref{eq:nu_r}, thereby proving
Theorem~\ref{th:defaut}. First, we recall that \cite{BLLfutur1} (see also \cite{BLLcpde}) shows that
in such a setting, the corrector equation \eqref{eq:correcteur} has a solution $w_p$ which reads as
\eqref{eq:correcteur-decomposition}, where $\widetilde w_p$ satisfies
\begin{equation}
  \label{eq:estimation_w_tilde_1}
  \text{if}\quad r>1, \quad \nabla \widetilde w_p\in L^q(\RR^d), \quad \forall q\in [r,+\infty[,
\end{equation}
\begin{equation}
  \label{eq:estimation_w_tilde_2}
  \text{if}\quad r=1, \quad \nabla \widetilde w_p\in L^q(\RR^d), \quad \forall q\in ]1,+\infty[,
\end{equation}
and with the property
\begin{equation}
  \label{eq:estimation_w_tilde_3}
  \text{if}\quad r<d, \quad \widetilde w_p\in L^\infty(\RR^d).
\end{equation}

\begin{proposition}\label{pr:defaut1}
  Assume that the matrix-valued coefficient $a$ satisfies \eqref{eq:aper+tildea} and \eqref{eq:hyp1}, with $r\neq d$. Then there
  exists a constant $C>0$ depending only on $a$ such that
  \begin{equation}
    \label{eq:borne_correcteur_defaut}
    \forall p\in \RR^d, \quad \forall x\in \RR^d,\quad \forall y\in \RR^d, \quad |w_p(x) - w_p(y)|\leq C|p|\, |x-y|^{1-\nu_r},
  \end{equation}
where $\nu_r$ is defined by \eqref{eq:nu_r}.
\end{proposition}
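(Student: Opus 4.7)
The plan is to treat the two pieces of the decomposition $w_p = w_{p,per} + \widetilde w_p$ from \eqref{eq:correcteur-decomposition} separately, exploiting that the corrector equations \eqref{eq:correcteur-per} and \eqref{eq:correcteur-tilde} are linear in $p$ so that all constants below depend linearly on $|p|$.

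For the periodic part, the plan is to apply standard elliptic regularity to \eqref{eq:correcteur-per}, using the $C^{0,\alpha}_{\rm unif}$ assumption on $a^{per}$ in \eqref{eq:hyp1}, to obtain $\nabla w_{p,per} \in L^\infty(\RR^d)$ with norm bounded by $C|p|$. Since $w_{p,per}$ is periodic, fixing the additive constant by $\int_Q w_{p,per}=0$ yields $\|w_{p,per}\|_{L^\infty(\RR^d)} \leq C|p|$ as well. The desired bound then follows by distinguishing two regimes: for $|x-y|\leq 1$ the Lipschitz bound gives $|w_{p,per}(x)-w_{p,per}(y)|\leq C|p|\,|x-y|\leq C|p|\,|x-y|^{1-\nu_r}$ since $\nu_r\in\,]0,1]$; for $|x-y|\geq 1$, the $L^\infty$ bound combined with $|x-y|^{1-\nu_r}\geq 1$ yields the same estimate.

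For the defect part $\widetilde w_p$, I would split according to the two regimes admitted by the statement. If $r<d$, then $\nu_r=1$ so $1-\nu_r=0$, and \eqref{eq:estimation_w_tilde_3} directly provides $\|\widetilde w_p\|_{L^\infty(\RR^d)} \leq C|p|$, giving $|\widetilde w_p(x)-\widetilde w_p(y)|\leq C|p|= C|p|\,|x-y|^{1-\nu_r}$. If $r>d$, then $\nu_r=d/r\in\,]0,1[$, and \eqref{eq:estimation_w_tilde_1} with $q=r$ provides $\nabla \widetilde w_p \in L^r(\RR^d)$ with norm bounded by $C|p|$; since $r>d$, the classical Morrey embedding $W^{1,r}(\RR^d)\hookrightarrow C^{0,1-d/r}(\RR^d)$ yields
$$|\widetilde w_p(x)-\widetilde w_p(y)|\leq C\,\|\nabla \widetilde w_p\|_{L^r(\RR^d)}\,|x-y|^{1-d/r}\leq C|p|\,|x-y|^{1-\nu_r}.$$
Summing the contributions from $w_{p,per}$ and $\widetilde w_p$ then gives \eqref{eq:borne_correcteur_defaut}.

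The main technical point to watch is the verification that the constants in all the quoted bounds depend linearly on $|p|$; this propagates through the construction of \cite{BLLfutur1} because the periodic equation \eqref{eq:correcteur-per} is linear in $p$, and the right-hand side of \eqref{eq:correcteur-tilde} is linear in $p$ through $\nabla w_{p,per}$. Apart from this bookkeeping, Morrey's embedding is the only non-trivial tool employed, and the critical exponent $r=d$ is correctly excluded by the hypothesis, consistently with the logarithmic loss noted after Theorem~\ref{th:defaut}.
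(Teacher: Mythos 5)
Your proof is correct and follows essentially the same path as the paper's: reduce to $|p|=1$ by linearity, decompose $w_p = w_{p,per}+\widetilde w_p$, handle the periodic part by elliptic regularity plus periodicity (giving a bounded Lipschitz function, hence trivially $C^{0,1-\nu_r}$), and handle $\widetilde w_p$ by boundedness \eqref{eq:estimation_w_tilde_3} when $r<d$ and by Morrey's inequality from $\nabla\widetilde w_p\in L^r$ when $r>d$. You are somewhat more explicit than the paper on why $w_{p,per}\in C^{1,\alpha}_{\rm unif}$ together with periodicity yields the $|x-y|^{1-\nu_r}$ bound (the two-regime split $|x-y|\lessgtr 1$), but the substance is identical.
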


\begin{remarque}
  In Proposition~\ref{pr:defaut1}, the case $r=d$ is not covered. However, since in fact $\widetilde a\in L^r\cap
  L^\infty$, this case can be addressed using the fact that $\widetilde a\in L^r$ for any $r>d$. 
\end{remarque}

\begin{proof}
  Since $p\mapsto w_p$ is a linear map, it is sufficient to prove \eqref{eq:borne_correcteur_defaut} in the case
  $|p|=1$.  First, elliptic regularity \cite[Theorem 8.32]{GT} implies that $w_{p,per}\in C^{1,\alpha}_{\rm unif}(\RR^d)$,
  hence it clearly satisfies \eqref{eq:borne_correcteur_defaut}. Therefore, we only prove that $\widetilde w_p$
  satisfies \eqref{eq:borne_correcteur_defaut}. 

If $r<d$, $\nu_r=1$, and \eqref{eq:borne_correcteur_defaut} is a direct consequence of
\eqref{eq:estimation_w_tilde_3}.

If $r>d$, we apply Morrey's Theorem \cite[Theorem 4.10]{EvansGariepy} to $\widetilde w_p$:
\begin{displaymath}
  |\widetilde w_p(x) - \widetilde w_p(y)|\leq \left\|\nabla \widetilde w_p \right\|_{L^r(\RR^d)} |x-y|^{1-\frac d
    r} = \left\|\nabla \widetilde w_p \right\|_{L^r(\RR^d)} |x-y|^{1-\nu_r}
\end{displaymath}
Applying the triangle inequality, \eqref{eq:borne_correcteur_defaut} is proved.
\end{proof}

We now prove that a potential $B$ defined by (\ref{eq:B-equation-2}) exists and has suitable properties in the
present setting. 
\begin{lemme}\label{lm:existence_B}
  Assume that $1<q<+\infty,$  and that $\widetilde M = \left(\widetilde M_k^i\right)_{1\leq i,k\leq d}\in L^q(\RR^d)$ satisfies
  \begin{displaymath}
    \forall k\in \{1,\dots,d\}, \quad \div\left(\widetilde M_k\right) = 0.
  \end{displaymath}
Then, the potential $\widetilde B_k^{ij}$ defined by
\begin{equation}\label{eq:1}
  \widetilde B_k^{ij}(x) = \int_{\RR^d} \left(\frac 1 {d\omega_d}\, \frac{x_i-y_i}{|x-y|^d}\,\widetilde M_k^j(y) - \frac 1 {d\omega_d}
    \,\frac{x_j-y_j}{|x-y|^d}\, \widetilde M_k^i(y)\right)dy,
\end{equation}
where the constant $\omega_d$ is the surface of the unit sphere in $\RR^d$,
satisfies $\nabla \widetilde B\in L^q(\RR^d)$, and (\ref{eq:B-equation-2}), hence
(\ref{eq:Mik})-(\ref{eq:B-antisymetrique})-(\ref{eq:B-equation}). In addition, there exists a constant depending
on $d$ and $q$ only such that
\begin{equation}\label{eq:2}
  \left\|\nabla \widetilde B\right\|_{L^q(\RR^d)} \leq C \left\|\widetilde M\right\|_{L^q(\RR^d)}.
\end{equation}
Finally, if $q<d$ and if $\widetilde M\in L^\infty(\RR^d)$, then $\widetilde B \in L^\infty(\RR^d)$, and there exists a
constant depending only on $d$ and $q$ such that
\begin{equation}
  \label{eq:B_borne}
  \left\|\widetilde B \right\|_{L^\infty(\RR^d)} \leq C \left( \left\|\widetilde M\right\|_{L^q(\RR^d)} + \left\|\widetilde M\right\|_{L^\infty(\RR^d)}\right).
\end{equation}

\end{lemme}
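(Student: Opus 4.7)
The starting point is to recognize that, up to multiplicative constants, the kernel $(x_i-y_i)/|x-y|^d$ in \eqref{eq:1} is nothing but $-\partial_i \Phi(x-y)$, where $\Phi$ is the fundamental solution of $-\Delta$ on $\RR^d$ (recall $d\geq 3$). Consequently, formula \eqref{eq:1} rewrites, modulo the global normalization constant, as
\[
\widetilde B_k^{ij} \;=\; -\partial_i\Phi \ast \widetilde M_k^j \;+\; \partial_j\Phi \ast \widetilde M_k^i,
\]
from which the skew-symmetry \eqref{eq:B-antisymetrique} is immediate. Applying $-\Delta$ and using $-\Delta\Phi = \delta_0$ yields \eqref{eq:B-equation-2} in the distributional sense. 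The identity \eqref{eq:B-equation} then follows by a direct computation of $\sum_i \partial_i \widetilde B_k^{ij}$ via integration by parts in $y$, in which the divergence-free condition $\div(\widetilde M_k)=0$ is used precisely to kill what would otherwise be a non-trivial cross term (so that no spurious harmonic function is added by the Riesz representation).

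The $L^q$ estimate \eqref{eq:2} on $\nabla \widetilde B$ rests on Calder\'on--Zygmund theory. Each partial derivative $\partial_\ell \widetilde B_k^{ij}$ is a linear combination of convolutions of $\widetilde M_k^i, \widetilde M_k^j$ against the second derivatives $\partial_\ell\partial_m\Phi$. These kernels are smooth away from the origin, homogeneous of degree $-d$, and have zero mean on the unit sphere (from $\int_{S^{d-1}} \omega_i \omega_\ell\, d\sigma = (\omega_d/d)\delta_{i\ell}$, the trace $\sum_\ell \partial_\ell \partial_m \Phi$ contributions cancel). By the classical Calder\'on--Zygmund theorem, the associated singular-integral operators are bounded on $L^q(\RR^d)$ for every $q \in {]1,+\infty[}$, whence \eqref{eq:2}. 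This is the only genuinely non-trivial analytic input and explains why the constraint $1<q<+\infty$ is sharp.

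For the $L^\infty$ bound \eqref{eq:B_borne} in the regime $q<d$, I would estimate $\widetilde B_k^{ij}(x)$ pointwise directly from \eqref{eq:1}, splitting the integration into the near region $|x-y|<1$ and the far region $|x-y|\geq 1$. On the near region the kernel is dominated by $|x-y|^{1-d}$, which is locally integrable since $1-d>-d$, so pulling $\widetilde M$ out in $L^\infty$ yields a contribution controlled by $C\|\widetilde M\|_{L^\infty}$. On the far region the kernel belongs to $L^{q'}$ precisely when $(d-1)q'>d$, equivalently $q<d$, and H\"older's inequality then gives a contribution controlled by $C\|\widetilde M\|_{L^q}$. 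Summing the two contributions yields \eqref{eq:B_borne}.

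The main technical obstacle is really a matter of rigour rather than of substance: when $\widetilde M$ lies only in $L^q$, the kernel $x/|x|^d$ is not in $L^1(\RR^d)$ and the convolution in \eqref{eq:1} need not be absolutely convergent (in particular for $q\geq d$), so the distributional identities above have to be proved by approximation. I would handle this by mollifying $\widetilde M$ into a sequence of smooth compactly-supported divergence-free fields (mollification preserves $\div(\widetilde M_k)=0$), checking all the identities on the regularized problem where integration by parts is classical, and then passing to the limit using the $L^q$-continuity of the Calder\'on--Zygmund operators furnished by \eqref{eq:2}.
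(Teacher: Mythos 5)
Your proof is correct and follows essentially the same strategy as the paper: recognizing the kernel in \eqref{eq:1} as $\nabla\Phi(x-y)$ for the Newtonian potential $\Phi$, deducing skew-symmetry and the Poisson equation \eqref{eq:B-equation-2} by construction, invoking Calder\'on--Zygmund theory for the bound \eqref{eq:2} on $\nabla\widetilde B$, splitting near/far and using H\"older (with the observation $q<d\Leftrightarrow (d-1)q'>d$) for the $L^\infty$ bound, and regularizing via density to justify the manipulations when $\widetilde M$ is merely in $L^q$. The only stylistic difference is that the paper first derives the $L^2$ bound directly by testing \eqref{eq:B-equation-2} against $\widetilde B$ and only then cites Calder\'on--Zygmund for general $q$, whereas you argue from the kernel properties of $\partial_\ell\partial_m\Phi$ at once (your phrasing of the cancellation condition is a bit garbled, but the underlying fact — zero spherical mean of the singular part — is correct).
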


\begin{proof}
First, it is clear that (\ref{eq:1}) is a well-defined function if $M$ has compact support. Next, we consider the
operator $T$, which to $\widetilde M$ associates $\nabla \widetilde B$. Moreover, 
\eqref{eq:B-antisymetrique}-\eqref{eq:B-equation} are clearly satisfied by $\widetilde B$, hence, we
have~\eqref{eq:B-equation-2}. Multiplying it by $B$ and integrating by parts, we have
\begin{displaymath}
  \int_{\RR^d} |\nabla B|^2 = \int_{\RR^d} - M_k^i \partial_j B_k^{ij} + M_k^j\partial_i B_k^{ij} \leq
  \|M\|_{L^2(\RR^d)} \, \|\nabla B\|_{L^2(\RR^d)}.
\end{displaymath}
Hence, a density argument allows to define it as a
continuous operator from $L^2(\RR^d)$ to itself. Furthermore, $T$ is a Calderon-Zygmund operator (see \cite[Def. 1, p
224]{Meyer2}). Hence, (\ref{eq:2}) holds. 

It remains to prove \eqref{eq:B_borne}. We split the integral in \eqref{eq:1} into the integral over $|x-y|<1$ and
the integral over $|x-y|>1$, and find
\begin{displaymath}
  \left|\widetilde B (x) \right|\leq C \left| \widetilde M * \left(\frac 1{ |x|^{d-1}} \mathbf{1}_{|x|<1}\right)\right|(x)
  + C\left| \widetilde M * \left(\frac 1{ |x|^{d-1}} \mathbf{1}_{|x|>1}\right)\right|(x).
\end{displaymath}
Hence, applying the H\"older inequality,
\begin{displaymath}
  \left\|\widetilde B\right\|_{L^\infty(\RR^d)} \leq C \left\|\widetilde M \right\|_{L^\infty(\RR^d)}\left\|\frac 1
    {|x|^{d-1}}\right\|_{L^1(B(0,1)} + C \left\|\widetilde M \right\|_{L^q(\RR^d)}\left\|\frac 1
    {|x|^{d-1}}\right\|_{L^{q'}(B(0,1)^C}.
\end{displaymath}
We point out that, on the one hand, $|x|^{1-d}\in L^1(B(0,1))$, and on the other hand, since $q<d$, $q'>d/(d-1)$,
whence $|x|^{1-d}\in L^{q'}(B(0,1)^C)$. We have thus proved \eqref{eq:B_borne}.
\end{proof}

\begin{proposition}\label{pr:defaut2}
  Assume that the matrix-valued coefficient $a$ satisfies \eqref{eq:aper+tildea} and \eqref{eq:hyp1} for some $r\geq 1$. 
  Let 
\begin{displaymath}
  M_k^i(x) = a_{ik}^* - \sum_{j=1}^d a_{ij}(x) \left(\delta_{jk} + \partial_j w_{e_k}(x)\right)
\end{displaymath}
be defined by \eqref{eq:Mik}.
Then there exists $B_k^{ij}$, $1\leq i,j,k\leq d,$ solution to \eqref{eq:B-antisymetrique}-\eqref{eq:B-equation},
that is,
\begin{displaymath}
  \forall i,j,k\in \{1,\dots,d\}, \quad  B_k^{ij} = - B_k^{ji},
 \qquad \sum_{i=1}^d \partial_i B_k^{ij} = M_k^j.
\end{displaymath}
Moreover, if $r\neq d$, then there exists $C>0$ such that
\begin{equation}
  \label{eq:estimation_B}
  \forall x\in \RR^d, \quad\forall y\in \RR^d, \quad |B(x)-B(y)|\leq C |x-y|^{1-\nu_r}.
\end{equation}
\end{proposition}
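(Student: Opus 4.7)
The strategy is to split the problem according to the decomposition $a=a^{per}+\widetilde a$ and $w_{e_k}=w_{e_k,per}+\widetilde w_{e_k}$. Using that $a^{*}$ coincides with the periodic homogenized matrix, a direct expansion of \eqref{eq:Mik} gives $M_k=M_k^{per}+\widetilde M_k$, where
\begin{displaymath}
  M_k^{per,i}(x)=a_{ik}^{*}-\sum_{j=1}^d a_{ij}^{per}(x)\bigl(\delta_{jk}+\partial_j w_{e_k,per}(x)\bigr)
\end{displaymath}
is the usual periodic object, and
\begin{displaymath}
  \widetilde M_k^i(x)=-\sum_{j=1}^d a_{ij}^{per}(x)\,\partial_j\widetilde w_{e_k}(x)-\sum_{j=1}^d\widetilde a_{ij}(x)\bigl(\delta_{jk}+\partial_j w_{e_k,per}(x)+\partial_j\widetilde w_{e_k}(x)\bigr)
\end{displaymath}
collects the perturbative terms. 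Both $M_k$ and $M_k^{per}$ are divergence-free (the latter by the periodic corrector equation), hence so is $\widetilde M_k$. Combining \eqref{eq:hyp1}, the boundedness and H\"older regularity \eqref{eq:regularite_holder_w} of the periodic corrector, and \eqref{eq:estimation_w_tilde_1}-\eqref{eq:estimation_w_tilde_2}, one checks that $\widetilde M_k\in L^{\tilde r}(\RR^d)\cap L^\infty(\RR^d)$ for some $\tilde r\in \,]1,+\infty[$ (one can take $\tilde r=r$ when $r>1$, and any $\tilde r\in \,]1,d[\,$ when $r=1$).

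For the periodic part, the classical construction recalled in \cite{JKO}, pp.~26--27, produces a periodic skew-symmetric potential $B^{per}$ with $\sum_i\partial_i B_k^{per,ij}=M_k^{per,j}$; periodicity and elliptic regularity yield $B^{per}$ and $\nabla B^{per}$ uniformly bounded on $\RR^d$. For the perturbative part, we apply Lemma~\ref{lm:existence_B} to $\widetilde M$ with $q=\tilde r$, which provides a skew-symmetric $\widetilde B$ satisfying $\sum_i\partial_i\widetilde B_k^{ij}=\widetilde M_k^j$ and $\nabla\widetilde B\in L^{\tilde r}(\RR^d)$. Setting $B:=B^{per}+\widetilde B$ then yields the required potential satisfying \eqref{eq:B-antisymetrique}-\eqref{eq:B-equation}.

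To establish \eqref{eq:estimation_B} for $r\neq d$, we distinguish two cases. If $r<d$, then $\nu_r=1$ and the claimed inequality reduces to the uniform boundedness of $B$: this follows from \eqref{eq:B_borne} applied with $q=\tilde r<d$, which gives $\widetilde B\in L^\infty(\RR^d)$, combined with $B^{per}\in L^\infty(\RR^d)$. If $r>d$, then $\nu_r=d/r$, and Morrey's inequality applied to $\widetilde B$ with $\nabla\widetilde B\in L^r(\RR^d)$ yields
\begin{displaymath}
  \bigl|\widetilde B(x)-\widetilde B(y)\bigr|\leq C|x-y|^{1-d/r}\quad\text{for every }x,y\in\RR^d.
\end{displaymath}
For the periodic part, we split: when $|x-y|\leq 1$, the bound $|B^{per}(x)-B^{per}(y)|\leq\|\nabla B^{per}\|_{L^\infty}|x-y|\leq C|x-y|^{1-\nu_r}$ holds, while when $|x-y|\geq 1$ we simply write $|B^{per}(x)-B^{per}(y)|\leq 2\|B^{per}\|_{L^\infty}\leq C|x-y|^{1-\nu_r}$. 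Adding the two contributions yields \eqref{eq:estimation_B}.

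The main technical point is to verify that $\widetilde M_k$ has the appropriate integrability so as to trigger either \eqref{eq:B_borne} in the regime $r<d$, or Morrey's inequality in the regime $r>d$; the case $r=1$ is the only one calling for a distinct choice of $\tilde r$, because Lemma~\ref{lm:existence_B} requires $q>1$. As noted before the proposition, the critical case $r=d$ is deliberately excluded and can be reduced to the case $r>d$ since $\widetilde a\in L^{r'}(\RR^d)$ for any $r'>d$.
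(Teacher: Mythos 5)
Your proposal is correct and follows essentially the same route as the paper: decomposing $M_k=M_k^{per}+\widetilde M_k$, constructing $B^{per}$ by the classical periodic argument of \cite{JKO} and $\widetilde B$ via Lemma~\ref{lm:existence_B}, then using \eqref{eq:B_borne} for $r<d$ and Morrey for $r>d$. The only differences are that you fill in a few details the paper leaves implicit (the divergence-free property of $\widetilde M_k$, the explicit $|x-y|\lessgtr 1$ splitting for $B^{per}$, and the choice of $\tilde r\in\,]1,d[$ when $r=1$), but these are consistent with the paper's own remarks and constitute the same argument.
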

\begin{proof}
  We define $B = B^{per} + \widetilde B$, where $B^{per}$ is the periodic solution to
  \begin{displaymath}
    \left(B^{per}\right)_k^{ij} = -\left(B^{per}\right)_k^{ij}, \qquad  \sum_{i=1}^d \partial_i \left(B^{per}\right)_k^{ij} = a_{jk}^* -
    \sum_{i=1}^d a_{ji}^{per} \left(\delta_{ik} + \partial_i w^{per}_{e_k}\right) := \left(M^{per}\right)_k^{j}.
  \end{displaymath}
This solution is proved to exist in \cite[pages 6-7]{JKO}. In addition, $B^{per}$ is solution to 
\begin{displaymath}
  \Delta \left(B^{per}\right)_k^{ij} = \partial_i \left(M^{per}\right)_k^j - \partial_j \left(M^{per}\right)_k^i.
\end{displaymath}
Our Assumption~\ref{H2} and classical elliptic regularity (applied to $w_{p,per}$) show that $\left(M^{per}\right)_k^j$ is in
$C^{0,\alpha}_{\rm unif}(\RR^d)$. Hence, still using elliptic regularity \cite[Corollary 8.32]{GT}, we have $\nabla
B^{per} \in C^{0,\alpha}_{\rm unif}(\RR^d)$. Arguing as in the proof of Proposition~\ref{pr:defaut1}, we obtain that
$B^{per}$ satisfies \eqref{eq:estimation_B}.

We now turn to $\widetilde B$. In order to define it, we first set, for all $j,k$,
\begin{equation}
  \label{eq:def_M_tilde}
  \widetilde M_k^{j}= -\sum_{i=1}^d \widetilde a_{ji} \left(\delta_{ik} + \partial_i w_{e_k}\right) - \sum_{i=1}^d
    a^{per}_{ij} \partial_i\widetilde w_{e_k} .  
\end{equation}
In view of \eqref{eq:estimation_w_tilde_1} and \eqref{eq:estimation_w_tilde_2}, we have $\widetilde M \in
L^q(\RR^d)$, for any $q\in ]r,+\infty[$, with $q=r$ allowed if $r>1$. Hence, $\widetilde M$ satisfies the
assumptions of Lemma~\ref{lm:existence_B}, hence there exists $\widetilde B$, defined by \eqref{eq:1}. We have
$\nabla \widetilde B \in L^q(\RR^d)$, and one easily proves that $\tilde B$ is a solution to 
\begin{equation}\label{eq:88}
    \widetilde B_k^{ij} = -\widetilde B_k^{ij}, \qquad  \sum_{i=1}^d \partial_i \widetilde B_k^{ij} = -
    \sum_{i=1}^d \widetilde a_{ji} \left(\delta_{ik} + \partial_i w_{e_k}\right) - \sum_{i=1}^d
    a^{per}_{ij} \partial_i\widetilde w_{e_k}.  
\end{equation}
In the case $r<d$,
we simply apply \eqref{eq:B_borne}, finding that $\widetilde B \in L^\infty(\RR^d)$, which implies
\eqref{eq:estimation_B}, since $\nu_r=1$. In the case $r>d$, we have $\nabla \widetilde B \in L^q(\RR^d)$, and we
may apply Morrey's Theorem as we did above for $\widetilde w_p$. This proves \eqref{eq:estimation_B}.
\end{proof}

Collecting the results of Proposition~\ref{pr:defaut1} and Proposition~\ref{pr:defaut2}, we have thus proved the
following Proposition, which in turn implies Theorem~\ref{th:defaut}.
\begin{proposition}\label{pr:3}
  Assume that $r\in [1,+\infty[$, $r\neq d$, and that the coefficient $a$ satisfies \eqref{eq:aper+tildea} and
  \eqref{eq:hyp1}. Then $a$ satisfies Assumptions~\ref{H1} through \ref{H6}, and
  \ref{H7}-\ref{H8}, with $\nu = \nu_r$ defined by \eqref{eq:nu_r}. 
\end{proposition}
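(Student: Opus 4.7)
The proof proposal is essentially to observe that Proposition~\ref{pr:3} is a bookkeeping synthesis of the preceding results. My plan is to simply check the eight assumptions one after the other, relying on the existence theory of \cite{BLLcpde,BLLfutur1} for the corrector and on Propositions~\ref{pr:defaut1} and \ref{pr:defaut2} for the two key quantitative statements (sublinearity of $w_p$ and of $B$).

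First, Assumptions~\ref{H1} and \ref{H2} are immediate: uniform ellipticity of $a = a^{per}+\widetilde a$ and $C^{0,\alpha}_{\rm unif}$ regularity are part of the hypothesis~\eqref{eq:hyp1}. Assumption~\ref{H3} is established in \cite{BLLcpde,BLLfutur1}: for every $p\in\RR^d$, the corrector equation~\eqref{eq:correcteur} admits a solution $w_p = w_{p,per}+\widetilde w_p$ via the decomposition \eqref{eq:correcteur-decomposition}--\eqref{eq:correcteur-tilde}, with $\widetilde w_p$ satisfying the integrability properties \eqref{eq:estimation_w_tilde_1}--\eqref{eq:estimation_w_tilde_3}.

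Next, Assumption~\ref{H7} is exactly the content of Proposition~\ref{pr:defaut1}, which yields the strict sublinearity bound $|w_p(x)-w_p(y)|\leq C|p|\,|x-y|^{1-\nu_r}$ for $|x-y|\geq 1$ (indeed for all $x,y$). Assumption~\ref{H8} is exactly the content of Proposition~\ref{pr:defaut2}: the potential $B = B^{per}+\widetilde B$ is constructed via the periodic theory of \cite{JKO} for the first summand and via the Calder\'on--Zygmund representation formula~\eqref{eq:1} for $\widetilde B$, and its sublinearity~\eqref{eq:estimation_B} follows from either $L^\infty$ boundedness (when $r<d$, via \eqref{eq:B_borne}) or Morrey's theorem (when $r>d$, via $\nabla\widetilde B\in L^q$).

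It remains to derive Assumptions~\ref{H4}, \ref{H5} and \ref{H6}. This is where Lemma~\ref{lm:rq2} does all the work for free: item~\ref{item:1} of that lemma states that \ref{H7} implies both \ref{H4} and \ref{H5}, and item~\ref{item:2} states that \ref{H8} implies \ref{H6}. Thus the verification of \ref{H4}--\ref{H6} reduces to what has already been proved. The only subtlety, and the one place where I expect to pause, is to confirm that the constant $\nu$ appearing in \ref{H7} and in \ref{H8} can be taken to be the same; by inspection of Propositions~\ref{pr:defaut1} and \ref{pr:defaut2}, both are obtained with the very same exponent $\nu_r = \min(1,d/r)$, so this is direct. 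The restriction $r\neq d$ is used precisely because at the critical exponent $r=d$ neither Morrey's embedding nor the $L^\infty$ bound~\eqref{eq:B_borne} applies; however, as noted in the paper just after Theorem~\ref{th:defaut} and in the remark following Proposition~\ref{pr:defaut1}, this critical case is not an obstruction since $\widetilde a \in L^\infty \cap L^d \subset L^r$ for any $r>d$, so it can be absorbed into the case $r>d$ with an arbitrarily small loss in the exponent. Collecting everything gives Proposition~\ref{pr:3}, and Theorem~\ref{th:defaut} then follows by applying Theorem~\ref{th:general}.
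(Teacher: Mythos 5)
Your proof is correct and follows essentially the same route as the paper: verify \ref{H1}--\ref{H3} directly from \eqref{eq:hyp1} and the corrector existence theory of \cite{BLLcpde,BLLfutur1}, invoke Propositions~\ref{pr:defaut1} and \ref{pr:defaut2} for \ref{H7} and \ref{H8}, and deduce \ref{H4}--\ref{H6} via Lemma~\ref{lm:rq2}. The only (trivial) divergence is that the paper cites \cite{BLLcpde,BLLfutur1} directly for \ref{H4}, whereas you obtain it as part of Lemma~\ref{lm:rq2}, item~\ref{item:1}; both are valid.
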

\begin{proof}
  It is clear that \eqref{eq:hyp1} implies \ref{H1} and \ref{H2}. As mentioned above, the results of
  \cite{BLLcpde,BLLfutur1} imply that \ref{H3} and \ref{H4} are satisfied. Proposition~\ref{pr:defaut1}
  implies \ref{H7}, and Proposition~\ref{pr:defaut2} implies \ref{H8}. Finally, Lemma~\ref{lm:rq2} implies \ref{H5}
  and \ref{H6}.
\end{proof}

\section*{Acknowledgements} The work of the third author is partially supported by ONR under Grant N00014-15-1-2777  and by EOARD, under Grant FA-9550-17-1-0294. 

  \bibliographystyle{siam}
  \bibliography{article}

\end{document}